
\documentclass[envcountsame, envcountsect]{svjour3}

\smartqed

\usepackage{epsfig}
\usepackage{pstricks}
\usepackage{url}
\usepackage{amsmath}
\usepackage{amsfonts}

\spnewtheorem*{conjecture*}{Conjecture}{\it}{\rm}

\newcommand{\graph}{\operatorname{G}}
\newcommand{\lk}{\operatorname{lk}}
\newcommand{\st}{\operatorname{st}}

\newcommand{\diam}{{\operatorname{diam}}}
\newcommand{\vdist}{{\operatorname{vdist}}}

\newcommand{\clm}{{\textrm{c.l.m.}}}
\newcommand{\clc}{{\textrm{c.l.c.}}}
\newcommand{\simp}{{\textrm{s}}}
\newcommand{\vol}{\operatorname{Vol}}

\newcommand{\K}{\mathbb K}
\newcommand{\reals}{\mathbb R}
\newcommand{\R}{\mathbb R}
\newcommand{\naturals}{\mathbb N}
\newcommand{\N}{\mathbb N}
\newcommand{\integers}{\mathbb Z}
\newcommand{\Z}{\mathbb Z}

\begin{document} 

\author{Francisco Santos}
\title
{Recent progress on the combinatorial diameter of polytopes and simplicial complexes
\thanks{%
Work of F.~Santos is supported in part by the Spanish Ministry of Science (MICINN) through grant MTM2011-22792  and by the MICINN-ESF EUROCORES programme EuroGIGA - ComPoSe IP04 - Project EUI-EURC-2011-4306.}
}

\institute{
{Departamento de Matem\'aticas, Estad\'istica y Computaci\'on},
{Universidad de Cantabria, E-39005 Santander, Spain}.
\email{francisco.santos@unican.es}, 
}

\titlerunning{Combinatorial diameter of polytopes and simplicial complexes}

\date{\today}

\maketitle

\begin{abstract}

The Hirsch conjecture, posed in 1957, stated that the graph of a $d$-dimensional polytope or polyhedron with $n$ facets cannot have diameter greater than $n-d$. 
%
The conjecture itself has been disproved, but what we know about the underlying question
is quite scarce. Most notably, no polynomial upper bound is known for the diameters that were conjectured to be linear. In contrast, no polyhedron violating the conjecture by more than 25\% is known.

This paper reviews several recent attempts and progress on the question. Some work in the world of polyhedra or (more often) bounded polytopes, but some try to shed light on the question by generalizing it to simplicial complexes. In particular, 
we include here our recent and previously unpublished proof that the maximum diameter of arbitrary simplicial complexes is in 
$n^{\Theta(d)}$ 
and we summarize the main ideas in the {\tt polymath 3} project, a web-based collective effort trying to prove 
an upper bound of type $nd$ for the diameters of polyhedra and of more general objects (including, e.~g., simplicial manifolds).
\end{abstract}

\keywords{Polyhedra, diameter, Hirsch conjecture, simplex method, simplicial complex}
\subclass{52B05, 90C60, 90C05}


\section{Introduction}

In 1957, Hirsch asked what Dantzig ``expressed geometrically'' as follows~\cite[Problem 13, p.~168]{Dantzig:book}:
\emph{
In a convex region in $n - m$ dimensional space defined by $n$ halfplanes, is $m$ an upper bound for the minimum-length chain of adjacent vertices joining two given vertices?
}
In more modern terminology:

\begin{conjecture*}[Hirsch, 1957]
{The (graph) diameter of a convex polyhedron with at most $n$ facets in $\R^d$ cannot exceed $n-d$.
}
\end{conjecture*}

An unbounded counter-example to this was found by Klee and Walkup in 1967~\cite{KleWal:dstep} and the bounded case was disproved only recently,
by the author of this paper~\cite{Santos:Hirsch-counter}. But the underlying question, how large can the diameter of a polyhedron be in terms of $n$ and $d$, can be considered to be still widely open:

\begin{itemize}
\item 
No polynomial upper bound is known. We can only prove $n^{\log d +2}$ (\emph{quasi-polynomial} bound by Kalai and Kleitman~\cite{KalKle:quasi-polynomial}) and $2^{d-3}n$ (linear bound \emph{in fixed dimension} by Larman~\cite{Larman:upper-bound}, improved to $\frac{2n}{3}2^{d-3}$ by Barnette~\cite{Barnette:upper-bound}). 

\item The known counter-examples violate the Hirsch bound only by a constant and \emph{small} factor (25\% in the case of unbounded polyhedra, 5\% for bounded polytopes). See~\cite{MaSaWe:5prismatoids,Santos:Hirsch-counter}.
\end{itemize}

The existence of a polynomial upper bound is dubbed the ``polynomial Hirsch Conjecture''
and was the subject of the third ``polymath project'', hosted by Gil Kalai in the fall of 2010~\cite{Kalai:polymath3}:

\begin{conjecture}[Polynomial Hirsch Conjecture]\label{conj:hirsch-poly}
Is there a polynomial function $f(n,d)$ such that for any polytope (or polyhedron)
$P$ of dimension $d$ with $n$ facets, $\diam(G(P)) \leq f(n,d)$?
\end{conjecture}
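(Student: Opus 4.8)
The plan is to abandon the geometry of Conjecture~\ref{conj:hirsch-poly} and attack it inside a purely combinatorial abstraction, in the spirit of the {\tt polymath 3} project. One forgets the polytope $P$ and keeps only its ``facet--vertex'' incidence data: a ground set $\{1,\dots,n\}$ of facet labels, a collection of $d$-subsets playing the role of vertices (a vertex lying in exactly the $d$ facets it is labelled by, generically), and the adjacency structure on them. The essential axioms that survive are (i) \emph{connectedness of stars}: for any subset $S$ of labels, the vertices containing $S$ induce a connected subgraph, which is again an object of the same type on the ground set $\{1,\dots,n\}\setminus S$ in dimension $d-|S|$; and (ii) a ``monotone'' or ``connected-layer'' version of this controlling how symmetric differences of labels evolve along shortest paths. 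If one can prove a bound of the form $nd$, or merely a polynomial bound, for the diameter of every such abstract object, then Conjecture~\ref{conj:hirsch-poly} follows, since face lattices of polytopes produce such objects.

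The engine I would use is a Kalai--Kleitman style recursion. Fix two vertices $u,v$ and let $H(n,d)$ denote the maximum diameter in the class. For a radius $r$ let $F_r(u)$ be the set of facets touched by the ball of radius $r$ around $u$. Choosing $r$ minimal with $|F_r(u)|>n/2$ and likewise $s$ for $v$, the two balls must share a common facet $i$; passing to the star of $i$ drops the dimension by one and costs $r+s+H(n-1,d-1)$. To bound $r$ one recurses \emph{within dimension} on the at most $n/2$ untouched facets, which yields the classical inequality
\[
H(n,d)\ \le\ H(n-1,d-1)\ +\ 2\,H(\lfloor n/2\rfloor,\,d)\ +\ 2,
\]
whose solution is the quasi-polynomial $n^{\log_2 d + O(1)}$ of Kalai--Kleitman quoted in the introduction. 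The crux — and where I expect the genuine obstacle to sit — is to kill the logarithm: to replace the halving step by one that costs only a constant factor in total rather than summing a geometric series $1+2+4+\cdots$. Concretely one wants either that a ball of radius $O(d)$ already meets a constant fraction of the facets, or a partition of the facet set into few pieces each of which is ``seen quickly''.

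A hard constraint on any such improvement is that it must at some point use a property genuinely enjoyed by polytope graphs but not by the abstract objects, because — as the later sections of this survey show — arbitrary simplicial complexes (and even pseudo-manifolds and unrestricted connected-layer families) can have diameter $n^{\Theta(d)}$, so the naive recursion above is essentially tight for them. Candidate ``extra axioms'' to inject are shellability of the boundary, the rigidity behind the Lower Bound Theorem, or the existence of a linear objective function orienting the graph acyclically with a unique sink on every face. My complementary sub-plan, if the direct recursion resists, is to localize the gap: prove the polynomial (or even the $nd$, or the Hirsch) bound first for structured families — $0/1$-polytopes, network-flow and transportation polytopes, duals of cyclic polytopes, polytopes with few vertices — then isolate the weakest combinatorial hypothesis satisfied by all polytopes that already rules out the $n^{\Theta(d)}$ examples, and re-run the Kalai--Kleitman argument under that hypothesis. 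Honestly, I do not expect this program to settle Conjecture~\ref{conj:hirsch-poly} outright; the realistic deliverable is either a sharpened recursion giving something like $n^{O(\log d/\log\log d)}$, or the identification of a clean property of polytope graphs provably forcing polynomial diameter.
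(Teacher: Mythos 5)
This statement is a \emph{conjecture}, not a theorem: the paper labels it as such and offers no proof; it records it as the open problem that drives the rest of the survey. So there is no ``paper's own proof'' to compare against, and rightly you do not claim to have one — your text is a research program, not an argument, and you say as much in the final sentence. Judged as a summary of the state of the art, it is faithful to what the paper does in Sections~\ref{sec:complexes} and~\ref{sec:clm}: pass from $P$ to an abstract combinatorial object carrying only the facet--vertex incidence data (the paper's connected layer multicomplexes), observe that the Kalai--Kleitman halving recursion already goes through at that level of generality and yields the quasi-polynomial bound $n^{\log_2 d+1}$ (Lemma~\ref{lemma:kk} and Theorem~\ref{thm:kk}; your recursion $H(n,d)\le H(n-1,d-1)+2H(\lfloor n/2\rfloor,d)+2$ is a harmless variant of the paper's $H_\clm(n,d)\le H_\clm(\lfloor (n-1)/2\rfloor,d)+H_\clm(\lceil(n-1)/2\rceil,d)+H_\clm(n,d-1)+2$), and identify the bottleneck as removing the $\log d$ in the exponent.

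One substantive imprecision should be flagged. You assert that ``unrestricted connected-layer families'' already have diameter $n^{\Theta(d)}$, and use this to argue that the recursion is essentially tight for them and that some extra axiom is indispensable. That is not what the paper shows. The $n^{\Theta(d)}$ lower bound (Corollary~\ref{coro:high-d}) is for \emph{arbitrary pure simplicial complexes}, which need not be normal and hence need not be connected layer families at all; and Kim's $n^{d/4}$ examples are for layer families in which the connectivity axiom~(\ref{eq:clm}) has been \emph{dropped}. For genuine connected layer (multi)complexes the only lower bound recorded in the paper is the $\Omega(d^2/\log d)$ of Eisenbrand et al., far below quasi-polynomial, and Conjecture~\ref{conj:haehnle} (H\"ahnle) even proposes a linear bound $d(n-1)$. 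So the honest state of affairs is more hopeful than your phrasing suggests: within the \clm\ framework it is entirely open whether the Kalai--Kleitman bound is anywhere near tight, and the ``extra axiom'' you seek might already be the connectivity axiom itself. The rest of your plan — targeting structured families, trying to replace the geometric halving by a partition seen in $O(d)$ steps — matches the discussion in the paper but likewise remains a plan rather than a proof.
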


The main motivation for this question is its relation to the worst-case complexity of the simplex method. More precisely, its relation to the possibility that a pivot rule for the simplex method exists that is guaranteed to finish in a polynomial number of pivot steps. In this respect, it is also related to the possibility of designing \emph{strongly polynomial} algorithms for Linear Programming, a problem that Smale included among his list of \emph{Mathematical problems for the 21st century}~\cite{Smale:problems}. See Section~\ref{sec:polyhedra}.

Somehow surprisingly, the last couple of years have seen several exciting results on this problem coming from different directions, 
%
which made De Loera~\cite{DeLoera:optima} call 2010 the \emph{annus mirabilis} for the theory of linear programming.
The goal of this paper is to report on these new results and, at the same time, to fill a gap in the literature since some of them are, as yet, unpublished.

We have decided to make the scope of this paper intentionally limited for lack of time, space, and knowledge, so let us start mentioning several things that we \emph{do not} cover. 
 We do not cover the very exciting new lower bounds, 
 recently found by Friedmann et al.~\cite{Friedmann:Zadeh,FrHaZw:randomized} for the number of pivot steps required by certain classical pivot rules that resisted analysis. 
We also do not cover the promising investigations 
of Deza et al.~on continuous analogues to the Hirsch Conjecture in the context of interior point methods~\cite{DTZ:central-path,DTZ:continuous-dstep,DTZ:curvature} or other attempts at polynomial simplex-like methods for linear 
programming~\cite{Betke:relaxation,Chubanov:binary-solution,DunVem:rescaling,DuSpTe:smoothed,Vershynin:smoothed}.
Information on these developments can be found, apart of the original papers, in~\cite{DeLoera:optima}. (For some of them see also~\cite{Ziegler:whosolved} or our recent survey~\cite{KimSan:update}).

Our object of attention is \emph{the (maximum) diameter of polyhedra} in itself. This may seem a too narrow (and classical) topic, but there have been the following recent results on it, which we review in the second half of the paper:

\begin{itemize}
\item In Section~\ref{sec:smalln} we recall what is known about the exact maximum diameter of polytopes for specific values of $d$ and $n$. In particular, combining clever ideas and heavy computations, Bremner et al~\cite{BDHS:more-bounds,BreSch:diameter-few-facets} have proved that every $d$-polyhedron with at most $d+6$ facets satisfies the Hirsch bound. 

\item In Section~\ref{sec:Hirsch-counter} we give a birds-eye picture of our counter-examples to the bounded Hirsch Conjecture~\cite{MaSaWe:5prismatoids,Santos:Hirsch-counter}, focusing on what can and cannot be derived from our methods.

\item For polytopes and polyhedra whose (dual) face complex is  \emph{flag} the original Hirsch bound holds. (A complex is called \emph{flag} if it is the \emph{clique complex} of a graph). This is a recent result of Adiprasito and Benedetti~\cite{AdiBen:flag-complexes} proved in the general context of flag and \emph{normal} simplicial complexes. See Section~\ref{sec:flag-complexes}.

\item In 1980 Provan and  Billera~\cite{ProBil:decomposable} introduced $k$-decomposa\-bility and weak-$k$-decomposability of simplicial complexes (Definitions~\ref{defi:decomposable} and~\ref{defi:w-decomposable}) in an attempt to prove the Polynomial Hirsch Conjecture: for (weakly or not) $k$-decomposa\-ble polytopes, a bound of type $n^k$ can easily be proved.
Non-$0$-decomposable polytopes were soon found~\cite{KleKle:dstep}, but it has only recently been proved that polytopes that are not \emph{weakly} $0$-decomposable actually exist (De Loera and Klee~\cite{DelKle:decomposable}). In a subsequent paper, H\"aehnle et al.~\cite{HaKlPi:decomposable} extend the result to show that for every $k$ there are polytopes (of dimension roughly $k^2/2$) that are not weakly-$k$-decomposable. See Section~\ref{sec:decomposable}.

\item There is also a recent upper bound for the diameter of a polyhedron in terms of $n$, $d$ and the maximum \emph{subdeterminant} of the matrix defining the polyhedron (where integer coefficients are assumed), proved by Bonifas et al.~\cite{BDEHN:subdeterminants}. Most strikingly, when this bound is applied to the very special case of polyhedra defined by totally unimodular matrices it greatly improves the classical bound by Dyer and Frieze~\cite{DyeFri:random-walks}. See Section~\ref{sec:subdeterminants}.
\end{itemize}

 In the first half we report on some equally recent attempts of settling the Hirsch question by looking at the problem in the more general context of \emph{pure simplicial complexes}:
\emph{
What is the maximum diameter of the dual graph of a simplicial $(d-1)$-sphere or $(d-1)$-ball with $n$ vertices? 
}

Here a simplicial $(d-1)$-ball or sphere is a simplicial complex homeomorphic to the $(d-1)$-ball or sphere. Their relation to the Hirsch question is as follows. It has been known for a long time~\cite{Klee:diameters} that the maximum diameter of polyhedra and polytopes of a given dimension and number of facets is attained at simple ones. Here, a simple $d$-polyhedron is one in which every vertex is contained in exactly $d$-facets. Put differently, one whose facets are ``sufficiently generic''. To understand the combinatorics of a simple polyhedron
$P$  one can look at its dual simplicial complex, which is a $(d-1)$-ball if $P$ is unbounded and a $(d-1)$-sphere if $P$ is bounded and the diameter of $P$ is the \emph{dual diameter} of this simplicial complex.

We can also remove the sphere/ball condition and ask the same for all pure simplicial complexes. In Sections~\ref{sec:complexes} and~\ref{sec:clm} we include two pieces of previously unpublished work in this direction:

\begin{itemize}
\item We construct pure simplicial complexes whose diameter grows exponentially. More precisely, we show that the maximum diameter of simplicial $d$-complexes with $n$ vertices is in $n^{\Theta(d)}$ (Section~\ref{sec:complexes}, Corollary~\ref{coro:high-d}).

\item For complexes in which the dual graph of every star  is connected (the so-called \emph{normal} complexes) the Kalai-Kleitman and the Barnette-Larman bounds stated above hold, essentially with the same proofs. See Theorems~\ref{thm:kk} and~\ref{thm:bl}.
This is a consequence of more general work developed by several people (with a special mention to Nicolai H\"ahnle) in the ``polymath 3'' project coordinated by Gil Kalai~\cite{Kalai:polymath3}. We summarize the main ideas and results of that project in Section~\ref{sec:clm}. The project led to the conjecture that the (dual) diameter of every simplicial \emph{manifold} of dimension $d-1$ with $n$ vertices is bounded above by $(n-1)d$ (Conjecture~\ref{conj:haehnle}).

\end{itemize}

\section{The maximum diameter of simplicial complexes}
\label{sec:complexes}

\subsection{Polyhedra, linear programming and the Hirsch question}
\label{sec:polyhedra}

A \emph{(convex) polyhedron} is a region of $\reals^d$ defined by a finite number of linear inequalities. That is,
the set
\[
P(A, {\bf b}):= \{ {\bf x} \in \R^d : A{\bf x} \le {\bf b}\},
\]
for a certain real $n\times d$ matrix $A$ and right-hand side vector ${\bf b}\in \reals^n$. A bounded polyhedron is a \emph{polytope}.
Polyhedra and polytopes are the geometric objects underlying Linear Programming. Indeed, the \emph{feasibility region} of a linear program 
\[
\text{ Maximize } c\cdot {\bf x}, \text{ subject to } A{\bf x} \le {\bf b}
\]
is the polyhedron $P(A, {\bf b})$.

The combinatorics of a polytope or polyhedron is captured by its lattice of \emph{faces}, where a face of $P(A, {\bf b})$ is the set where a linear functional is maximized. 
%
Faces of a polyhedron $P$ are themselves polyhedra of dimensions ranging from $0$ to $\dim(P) -1$. 
Those of dimensions $0$, and $\dim(P) -1$ are called, respectively, \emph{vertices} and \emph{facets}. Bounded faces of dimension $1$ are called \emph{edges} and unbounded ones \emph{rays}. The vertices and edges of a polyhedron $P$ form the \emph{graph} of $P$, which we denote $\graph(P)$. We are interested in the following question:

\begin{question}[Hirsch question]
What is the maximum diameter among all polyhedra with a given number $n$ of facets and a given dimension $d$?
\end{question}

We call $H_p(n,d)$ this maximum. The Hirsch Conjecture stated that
\[
H_p(n,d)\le n-d.
\]

Hirsch's motivation for raising this question (and everybody else's for studying it!) is that 
the celebrated \emph{simplex algorithm} of 
George Dantzig~\cite{Dantzig:simplex}, one of the ``ten algorithms with the greatest influence in the development of science and engineering in the 20th century'' according to the list compiled by Jack Dongarra and Francis Sullivan~\cite{DonSul:topten,Nash:simplex-method}, solves linear programs by walking along the graph of the feasibility region, from an initial vertex that is easy to find (perhaps after a certain transformation of the program which does not affect its optimum) up to an optimal vertex.
When an improving edge does not exist we have either found a ray where the functional is unbounded (proving that the LP has no optimum) or a local and, by convexity, global optimum. 

That is to say, the Hirsch question is closely related to the worst-case computational complexity of the simplex method, a question that is somehow open; we know the simplex method to be exponential or subexponential (in the wort case) with most of the pivot rules that have been proposed, where a \emph{pivot rule} is the rule used by the method to choose the improving edge to follow. (See~\cite{KleMin:exponential-simplex} for Dantzig's \emph{maximum gradient} rule, the first one that was solved, and \cite{Friedmann:Zadeh,FrHaZw:randomized} for some recent additions, including \emph{random edge}, \emph{random facet} and Zadeh's \emph{least visited facet} rule).
But we do not know whether polynomial pivot rules exist. Of course, this is impossible (or, at least, it would require some delicate strategy to find a good initial vertex) if the answer to the Hirsch question turns out to be that $H_p(n,d)$ is not bounded by a polynomial. 

Observe also that, even if polynomial-time algorithms for linear programming are known (the most classical ones being the ellipsoid method of Khachiyan (1979) and the interior point method of Karmarkar (1984)), all of them work by successive approximation and, hence, they are not \emph{strongly polynomial}. They are polynomial in bit-complexity when the bit-size of the input is taken into account, but they are not guaranteed to finish in a number of arithmetic operations that is polynomial in $n$ and $d$ alone. In contrast, a polynomial pivot rule for the simplex method would automatically yield a strongly polynomial algorithm for linear programming. This relates the
Polynomial Hirsch Conjecture to one of Smale's ``Mathematical problems for the 21st century''~\cite{Smale:problems}, namely the existence of strongly polynomial algorithms for linear programming.

\subsection{From polyhedra to simplicial complexes}

It is known since long that the maximum $H_p(n,d)$ is achieved at a \emph{simple} polyhedron, for every $n$ and $d$~\cite{Klee:diameters}. 
So, let $P$ be a simple $d$-polyhedron with $n$ facets, which we label (for example) with the numbers $1$ to $n$. Since each non-empty face of $P$ is (in a unique way) an intersection of facets, we can label faces as subsets of $[n]:=\{1,\dots,n\}$. Let $C$ be the collection of subsets of $[n]$ so obtained. It is well-known (and easy to prove) that, if $P$ is simple, then 
$C$ is a  \emph{pure (abstract) simplicial complex} of dimension $d-1$ with $n$ vertices (or, a \emph{$(d-1)$-complex} on $n$ vertices, for short). 

Here, a simplicial complex is a collection $C$ of subsets from a set $V$ of size $n$ (for example, the set $[n]=\{1,\dots,n\}$) with the property that if $X$ is in $C$ then every subset of $X$ is in $C$ as well. The individual sets in a simplicial complex are called the \emph{faces} of it, and the maximal ones are called its \emph{facets}. A  complex is \emph{pure of dimension $d-1$} if all its facets  have the same cardinality, equal to $d$. Of course, a pure simplicial complex is determined by its list of facets alone, which are subsets of $[n]$ of cardinality $d$. Conversely, any such subset defines a pure simplicial complex of dimension $d-1$. Hence we take this as a definition:

\begin{definition}
A \emph{pure simplicial complex of dimension $d-1$}  (or, a \emph{$(d-1)$-complex})
is any family of $d$-element subsets of an $n$-element set $V$ (typically, $V=[n]:=\{1,\dots,n\}$).  Elements of $C$ are called \emph{facets} and any subset of a facet is a \emph{face}. More precisely, a $k$-face is a face with $k+1$ elements. Faces of dimensions $0$, $1$, and $d-2$ are called, respectively, \emph{vertices}, \emph{edges} and \emph{ridges}.
\end{definition}

Observe that a pure $(d-1)$-complex is the same as a \emph{uniform hypergraph of rank $d$}. Its facets are called \emph{hyperedges} in the hypergraph literature.

The particular $(d-1)$-complex obtained above from a simple polytope $P$ is called the \emph{dual face complex} of $P$.
The \emph{adjacency graph} or \emph{dual graph} of a pure complex $C$, denoted $\graph(C)$, is the graph having as vertices the facets of $C$ and as edges the pairs of facets $X,Y\in C$ that differ in a single element (that is, those that share a ridge). 
We are only interested in complexes with a connected adjacency graph, which are called \emph{strongly connected} complexes.

\begin{example}
\rm
A $0$-complex is just a set of elements of $[n]$ and its adjacency graph is complete. A $1$-complex is a graph $G$, and its adjacency graph is the \emph{line graph} of $G$. 
The adjacency graph of the \emph{complete complex} $\binom{[n]}{ d}$ is usually called the \emph{Johnson graph} $J(n,d)$ (see, e.g., \cite{HolShe:Petersen}). It is also the graph (1-skeleton) of the $d$-th hypersimplex of dimension $n-1$~\cite{DeRaSa:triang_book,Ziegler:lectures-on-polytopes}, and the basis exchange graph of the uniform matroid of rank $d$ on $n$ elements.
\end{example}

We leave it to the reader to check the following elementary fact:

\begin{proposition}
\label{prop:dual-graph}
Let $P$ be a simple polyhedron with at least one vertex. Then the adjacency graph of the dual complex of $P$ equals the graph of $P$.
\qed
\end{proposition}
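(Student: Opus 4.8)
The plan is to compare $\graph(P)$ and $\graph(C)$ directly through the facet‑labeling of $P$, where $F_1,\dots,F_n$ denote the facets of $P$, each nonempty face $F$ is labeled by $S_F:=\{i\in[n]:F\subseteq F_i\}$, and $C=\{S_F:F\text{ a nonempty face of }P\}$ is the dual complex. I would first show that this labeling identifies the vertex sets of the two graphs, and then check that it preserves adjacency in both directions.

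First I would collect the standard facts that make the labeling well behaved. Since $P$ has a vertex it is pointed (line‑free), so every nonempty face of $P$ is the intersection of the facets containing it and $F\mapsto S_F$ is an inclusion‑reversing injection. Since $P$ is simple, the vertex figure at any vertex is a simplex, which gives $|S_F|=d-\dim F$ for every nonempty face $F$, together with $F=\bigcap_{i\in S_F}F_i$. Consequently the facets of $C$ (its $d$‑element faces) are exactly the labels of the vertices of $P$, and the ridges of $C$ (its $(d-1)$‑element faces) are exactly the labels of the (bounded) edges of $P$. In particular $F\mapsto S_F$ restricts to a bijection between the vertex set of $\graph(P)$ and the vertex set of $\graph(C)$, so it remains only to match edges.

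For the easy direction, if $e=vw$ is an edge of $P$ with $v\ne w$, then $S_e\subseteq S_v$ and $S_e\subseteq S_w$ with $|S_e|=d-1$, $|S_v|=|S_w|=d$, and $S_v\ne S_w$; hence $|S_v\cap S_w|=d-1$, i.e.\ $v$ and $w$ are adjacent in $\graph(C)$. For the converse I would start from $d$‑subsets $S_v,S_w$ with $|S_v\cap S_w|=d-1$, set $T:=S_v\cap S_w$ and $G:=\bigcap_{i\in T}F_i$, and show $S_G=T$: the inclusion $T\subseteq S_G$ is immediate, and any index in $S_G\setminus T$ would, via $v,w\subseteq G$, lie in $S_v\cap S_w=T$. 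Then $\dim G=d-(d-1)=1$, so $G$ is a one‑dimensional face of $P$; being line‑free, $G$ is a ray or a segment, and since it contains the two distinct vertices $v,w$ it must be the segment $vw$. Hence $vw$ is an edge of $P$, $v$ and $w$ are adjacent in $\graph(P)$, and $\graph(P)=\graph(C)$.

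I expect essentially all of the real work to sit in the converse direction, and within it the delicate point is that $G=\bigcap_{i\in T}F_i$ is \emph{exactly} an edge: that $\dim G=1$ (rather than larger) uses simplicity through the simplex vertex figure, and that $G$ is a bounded segment (rather than an unbounded edge) is precisely where the hypothesis ``$P$ has at least one vertex'', i.e.\ $P$ is pointed, is needed. Everything else — the inclusion‑reversal, injectivity, and cardinality formula for the labeling — is routine bookkeeping about simple polyhedra.
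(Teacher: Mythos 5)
The paper does not supply a proof here: it precedes the proposition with ``We leave it to the reader to check the following elementary fact'' and marks it \textbf{\qed} immediately. So there is nothing in the source to compare with; all I can do is check your argument, and it is correct. You use exactly the standard facts the paper is implicitly invoking — that a pointed polyhedron's nonempty faces are the intersections of the facets containing them (giving injectivity and inclusion-reversal of $F\mapsto S_F$), and that simplicity gives $|S_F|=d-\dim F$ — and then match vertices to facets of $C$ and edges to ridges of $C$. The forward direction is immediate from cardinalities; for the converse, your identification $S_G=T$ for $G=\bigcap_{i\in T}F_i$ via $v,w\in G$ is the right move, and you correctly isolate where each hypothesis enters: simplicity forces $\dim G=1$, and pointedness rules out $G$ being a line so that two distinct vertices in a one-dimensional face force it to be the bounded edge $vw$. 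This is a complete and tidy write-up of what the paper asks the reader to verify.
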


As usual, the \emph{diameter} of a graph is the maximum distance between its vertices, where the distance between vertices is the minimum number of edges  in a path from one to the other. For simplicity, we abbreviate ``diameter of the adjacency graph of the complex $C$'' to ``diameter of $C$''. The main object of our attention in this section is the function
\[
H_\simp(n,d):=\text{maximum diameter of strongly connected $(d-1)$-complexes on $[n]$}.
\]


\begin{proposition}
\label{prop:Johnson}
$H_\simp(n,d)$ equals the length of the longest induced path in the Johnson graph $J(n,d)$.
\end{proposition}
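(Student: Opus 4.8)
The plan is to exploit the fact that, by the definitions just given, a pure $(d-1)$-complex on $[n]$ is nothing but a set of vertices of the Johnson graph $J(n,d)$, and that its adjacency graph $\graph(C)$ is precisely the subgraph of $J(n,d)$ induced on that vertex set. Granting this dictionary, the equality reduces to the elementary observation that a shortest path in any graph is an induced (chordless) path.

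For the inequality $H_\simp(n,d) \ge \ell$, where $\ell$ denotes the length of the longest induced path in $J(n,d)$: first I would take an induced path $\Pi$ in $J(n,d)$ realizing the maximum $\ell$, and let $C$ be its vertex set, regarded as a collection of $d$-element subsets of $[n]$, i.e., as a pure $(d-1)$-complex. Since two $d$-subsets are adjacent in $J(n,d)$ exactly when they differ in a single element, the adjacency graph $\graph(C)$ is the subgraph of $J(n,d)$ induced on $C$; and because $\Pi$ is induced, this subgraph is $\Pi$ itself. In particular $\graph(C)$ is connected, so $C$ is strongly connected, and its diameter is $\ell$, the distance between the two endpoints of $\Pi$. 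Hence $H_\simp(n,d)\ge \ell$.

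For the reverse inequality I would start from a strongly connected $(d-1)$-complex $C$ whose adjacency graph has diameter $H_\simp(n,d)$, choose facets $X,Y\in C$ at distance $D:=H_\simp(n,d)$ in $\graph(C)$, and let $\Gamma$ be a shortest $X$–$Y$ path in $\graph(C)$, of length $D$. The key point is that $\Gamma$ has no chords: if two of its vertices $v_i,v_j$ with $j>i+1$ were adjacent in $\graph(C)$, replacing the subpath between them by the edge $v_iv_j$ would yield a strictly shorter $X$–$Y$ walk, contradicting minimality. Since $\graph(C)$ is exactly the subgraph of $J(n,d)$ induced on $C$, adjacency in $J(n,d)$ between two facets of $C$ coincides with adjacency in $\graph(C)$; therefore $\Gamma$ is chordless in $J(n,d)$ as well, i.e., an induced path of $J(n,d)$ of length $D$. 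Thus $\ell\ge D=H_\simp(n,d)$, and together with the previous paragraph this gives equality.

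I expect no real obstacle: the only things needing (minor) care are the translation between pure complexes and vertex subsets of $J(n,d)$, which is immediate from the definitions, and the fact that shortest paths are induced — which must be applied inside the induced subgraph $\graph(C)$ and then transported to $J(n,d)$ using that $\graph(C)$ is itself induced. One should also note that the maximum defining $H_\simp(n,d)$ is attained, which is clear since for fixed $n,d$ there are only finitely many such complexes.
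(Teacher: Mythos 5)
Your proof is correct and is essentially the paper's argument: identify $\graph(C)$ with the subgraph of $J(n,d)$ induced on $C$, and use that a shortest path is chordless to pass between diameters of complexes and induced paths. You simply make the two inequalities explicit where the paper states the common core (restricting a complex to the facets on a geodesic) more tersely.
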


\begin{proof}
Every pure $(d-1)$-simplicial complex $C$ on $n$ vertices is a subcomplex of the complete complex $\binom{[n]}{ d}$, and the adjacency graph of $C$ is the corresponding induced subgraph in $J(n,d)$. So, it suffices to show that $H_\simp(n,d)$ is achieved at a complex whose adjacency graph is a path.

For this, let $C$ be any pure $d$-complex, and let $X$ and $Y$ be facets at maximal distance. Let $\Gamma$ be a shortest path of facets from $X$ to $Y$. Then $\Gamma$, considered as a set of facets, is a pure $d$-complex with the same diameter as $C$ and its adjacency graph is a path (or otherwise the path $\Gamma$ in $C$ would not be shortest).
\qed
\end{proof}

\begin{remark}
\rm
There is some literature on the problem of finding the longest induced path in  an arbitrary graph, which is NP-complete in general. 
But we do not know of any where this problem is addressed for the Johnson graph specifically.
\end{remark}

We call complexes whose adjacency graphs are paths \emph{corridors}. They are particular examples of \emph{pseudomanifolds}, that is complexes in which every ridge is contained in at most two facets. 

\begin{corollary}
\label{coro:corridor1}
The maximum diameter of pure simplicial complexes of fixed dimension and number of vertices is always attained at a corridor, hence at a pseudomanifold. 
In particular,
\[
H_\simp(n,d) < \frac{1}{d-1}\binom{n}{ d-1} -1.
\]
\end{corollary}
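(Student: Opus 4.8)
The plan is to read both statements off the proof of Proposition~\ref{prop:Johnson} together with one elementary counting argument. For the first assertion there is essentially nothing new to do: the proof of Proposition~\ref{prop:Johnson} already shows that, starting from any pure $(d-1)$-complex attaining $H_\simp(n,d)$, a shortest path $\Gamma$ of facets between two facets at maximal distance is itself a pure $(d-1)$-complex with the same diameter and with adjacency graph a path, i.e.\ a corridor. So I would just add the observation that a corridor is a pseudomanifold. To see this, order the facets of a corridor along its path as $F_0,\dots,F_\ell$, so that $F_i$ and $F_j$ share a ridge exactly when $|i-j|=1$. If a ridge $R$ were contained in three distinct facets, some two of them, say $F_i$ and $F_j$ with $|i-j|\ge 2$, would both contain $R$ and hence, being distinct $d$-sets, would differ in a single element and be adjacent --- contradicting $|i-j|\ge 2$. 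Hence every ridge lies in at most two facets.

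For the numerical bound I would put $\ell:=H_\simp(n,d)$ and fix a corridor with facets $F_0,\dots,F_\ell$ realizing it, and count incidences between its facets and its ridges. On one hand there are exactly $d(\ell+1)$ such incidences, since each of the $\ell+1$ facets has $d$ ridges. On the other hand, call $R_i:=F_{i-1}\cap F_i$, for $i=1,\dots,\ell$, the \emph{interior} ridges. These are pairwise distinct: if $R_i=R_j$ with $i<j$, it would lie in at least three of the facets $F_{i-1},F_i,F_{j-1},F_j$, contradicting the pseudomanifold property just proved. Conversely, any ridge lying in two facets must be some $R_i$, because two facets sharing a ridge are adjacent and hence consecutive on the path. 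Therefore the $\ell$ ridges $R_1,\dots,R_\ell$ account for $2\ell$ incidences, the remaining $d(\ell+1)-2\ell$ incidences come from ridges contained in exactly one facet, and the total number of distinct ridges of the corridor is $\ell+\bigl(d(\ell+1)-2\ell\bigr)=(d-1)\ell+d$.

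To finish, I would use that all these ridges are distinct $(d-1)$-subsets of $[n]$, so $(d-1)\ell+d\le\binom{n}{d-1}$, whence
\[
H_\simp(n,d)=\ell\ \le\ \frac{1}{d-1}\binom{n}{d-1}-\frac{d}{d-1}\ <\ \frac{1}{d-1}\binom{n}{d-1}-1 ,
\]
using $d/(d-1)>1$. I do not expect any real obstacle here: the substance is all in Proposition~\ref{prop:Johnson}, and the only point requiring care is the bookkeeping in the incidence count --- verifying that the interior ridges $R_i$ are genuinely distinct and that no ridge outside that list is shared by two facets --- which follows immediately from the pseudomanifold property.
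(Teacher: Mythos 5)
Your proof is correct and follows essentially the same route as the paper: read the first claim off the proof of Proposition~\ref{prop:Johnson}, then count ridge--facet incidences in a corridor of length $\ell$ to find exactly $(d-1)\ell+d$ distinct ridges, which must be at most $\binom{n}{d-1}$. The only difference is that you spell out the verification that corridors are pseudomanifolds and that the interior ridges $R_i$ are pairwise distinct, points the paper takes as immediate.
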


\begin{proof}
Let $P$ be a $(d-1)$-corridor on $n$ vertices and of length $N$. We can explicitly compute its number of ridges as follows: each of the $N+1$ facets has $d$ ridges, and exactly $N$ of these ridges belong to two facets, the rest only to one. Thus, the number of ridges equals
\[
d(N+1)-N = N(d-1) +d.
\]
Now, the total number of ridges in $P$ is at most $\binom{n}{ d-1}$ (the ridges in the complete complex), so
\[
N(d-1) +d \le \binom{n}{d-1},
\]
or
\[
N\le \frac{1}{d-1}\binom{n}{ d-1} -\frac{d}{d-1} < \frac{1}{d-1}\binom{n}{ d-1} -1.
\]
\qed
\end{proof}

\subsection{The maximum diameter of simplicial complexes}


\begin{theorem}
\label{thm:dimension2}
\[
\frac{2}{9}(n-3)^2 \le H_\simp(n,3) \le \frac{1}{4}n^2.
\]
\end{theorem}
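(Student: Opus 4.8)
The upper bound is nothing but the case $d=3$ of Corollary~\ref{coro:corridor1}: it gives
$H_\simp(n,3) < \tfrac12\binom{n}{2}-1 = \tfrac{n(n-1)-4}{4} < \tfrac{n^2}{4}$.

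For the lower bound the plan is to construct, for every large $n$, a \emph{corridor} of length at least $\tfrac29(n-3)^2$; by Proposition~\ref{prop:Johnson} this is the same as an induced path of that length in the Johnson graph $J(n,3)$. I would split almost all vertices into a set $A$ of size $2m$ and a set $B=\{b_1,\dots,b_m\}$ of size $m$, keeping two auxiliary vertices $h,h'$ aside, so that $n=3m+2$, and use \emph{only} triangles with two vertices in $A$ and one in $B$ (apart from a few bridge triangles). The combinatorial input is the classical decomposition of the complete graph $K_{2m}$ on the vertex set $A$ into $m$ edge-disjoint Hamiltonian paths $P_1,\dots,P_m$; note that in any such decomposition every vertex of $A$ is an endpoint of exactly one $P_k$ (a degree count). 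Traversing $P_k$ edge by edge and attaching $b_k$ to every edge gives a string of $2m-1$ triangles $\{x,y,b_k\}$ in which consecutive ones share an edge through $b_k$ and non-consecutive ones meet only in $b_k$ — already an induced path in $J(n,3)$. Since $P_k$ and $P_\ell$ are edge-disjoint and $b_k\ne b_\ell$, a triangle from the $P_k$-string and one from the $P_\ell$-string ($k\ne\ell$) share at most one vertex, so concatenating the $m$ strings never produces an unwanted adjacency \emph{between} strings. It remains to connect, for each $k$, the last triangle of the $P_k$-string to the first triangle of the $P_{k+1}$-string; writing $v,w$ for the endpoints of $P_k,P_{k+1}$ where the traversals end, resp.\ begin, I would insert the three bridge triangles $\{v,b_k,h_k\},\ \{b_k,h_k,b_{k+1}\},\ \{h_k,b_{k+1},w\}$, where $h_k=h$ for odd $k$ and $h_k=h'$ for even $k$. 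The resulting corridor has $m(2m-1)+3(m-1)=2m^2+2m-3$ triangles, i.e.\ length $2m^2+2m-4\ge\tfrac29(n-3)^2$ once $n$ is moderately large, and the rounding losses for general $n$ are absorbed in the slack between $2m^2$ and $\tfrac29(n-3)^2$.

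The step that needs real care — and where I expect the main obstacle to be — is certifying that the concatenation is an \emph{induced} path in $J(n,3)$, not merely a walk: equivalently, that no $2$-subset of $[n]$ occurs as the common edge (``bond'') of two non-consecutive triangles. Inside one string and between two strings this is guaranteed by the edge-disjointness of the Hamiltonian paths; the delicate case is the bridges, since a careless gluing makes a pair such as $\{h,b_k\}$, or $\{h,v\}$ for $v$ an endpoint of some $P_k$, occur as the bond of two triangles lying in different transitions. The alternation of $h$ and $h'$ with the parity of $k$, together with the fact that each vertex of $A$ is an endpoint of exactly one $P_k$, is precisely what rules out every such coincidence, and checking this case by case is the bulk of the argument. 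Finally, the ratio $|A|:|B|\approx 2:1$ is what makes the construction optimal: one needs one vertex of $B$ per Hamiltonian path in the decomposition of $K_{|A|}$, forcing $|B|\approx|A|/2$, and then the corridor uses about $\binom{|A|}{2}\approx\tfrac12|A|^2$ triangles; maximizing $\tfrac12|A|^2$ subject to $|A|+|B|=n$ and $|B|=\tfrac12|A|$ yields $|A|=\tfrac23 n$ and hence length $\approx\tfrac29 n^2$, matching the shape of the claimed bound.
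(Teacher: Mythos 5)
The upper bound is identical to the paper's: both invoke Corollary~\ref{coro:corridor1}.

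For the lower bound your construction is correct, but it is a genuinely different realization of the same strategy. Both proofs put roughly $2n/3$ vertices in a set $A$, roughly $n/3$ in a set $B$, decompose the complete graph on $A$ into about $n/3$ edge-disjoint Hamiltonian paths, and turn each path $P_k$ into a string of triangles by joining every edge to a private vertex $b_k\in B$; the real work in both cases is concatenating the strings into a single induced path in $J(n,3)$. The paper obtains its Hamiltonian paths from the classical decomposition of $K_{2k+1}$ (odd!) into $k$ Hamiltonian \emph{cycles}, and \emph{chooses which edge to delete} from each cycle so that consecutive paths share an endpoint $i_l$. This freedom in selecting endpoints is exactly what allows the paper to glue string $l$ to string $l+1$ with the single triangle $\{i_l,j_l,j_{l+1}\}$, giving length $2k^2+k-2$ on $n=3k+1$ vertices. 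You instead start from a Hamiltonian-\emph{path} decomposition of $K_{2m}$ (even), in which the endpoints are forced (each vertex is the endpoint of exactly one path), so consecutive strings share no vertex of $A$ and a single bridging triangle is impossible. You pay for this with two extra ``hub'' vertices $h,h'$ and three bridge triangles per junction, and the alternation of $h$ and $h'$ with the parity of $k$ is exactly the substitute for the paper's careful choice of deleted edges: it is what prevents edges of the form $\{h,b_k\}$ or $\{h,b_{k+1}\}$ from recurring in non-consecutive bridges. The two mechanisms are dual in spirit — the paper spends its combinatorial cleverness on choosing endpoints, you spend yours on an alternating hub — and both yield corridors of length $\tfrac29 n^2+O(n)$, so the final constant and the handling of $n\not\equiv$ fixed residue $\pmod 3$ are the same. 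One small caution: like the paper, your count only beats $\tfrac29(n-3)^2$ once $n$ is not too small (your $m(2m-1)+3(m-1)$ needs $n\gtrsim 8$), so the bound for the handful of smaller $n$ should be checked directly, which is routine since $\tfrac29(n-3)^2\le 2$ there.
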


\begin{proof}
The upper bound follows immediately from Corollary~\ref{coro:corridor1}.
%
For the lower bound let us first assume that $n=3k+1$ (that is, $n$ equals $1$ modulo $3$), and show:
\[
H_\simp(3k+1,d) \ge 2k^2+k-2 > 2k^2=\frac{2}{9}(n-1)^2.
\]
For this, separate the set of $3k+1$ vertices into two parts $V_1$ and $V_2$ with $|V_1|=2k+1$ and $|V_2|=k$.
With the first $2k+1$ vertices we construct $k$ edge-disjoint cycles; that is, we decompose the complete graph $K_{2k+1}$ into $k$ Hamiltonian cycles. This can be done by rotating in all $k$ possible ways the cycle shown (for $k=4$) in Figure~\ref{fig:cycle}.
\begin{figure}
\includegraphics[scale=0.5]{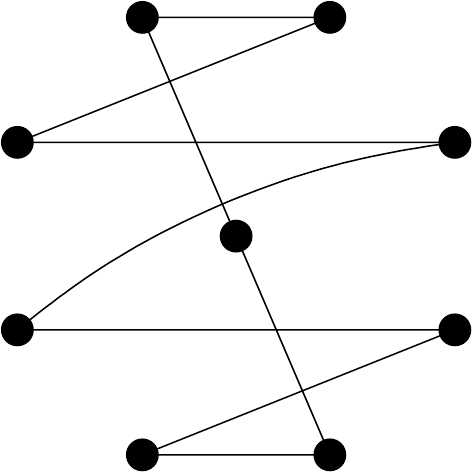}
\caption{Decomposing $K_{2k+1}$ into $k$ Hamiltonian cycles}
\label{fig:cycle}
\end{figure}
We now remove one edge from each of these cycles, so as to construct a walk of $2k^2$ edges with the property that each of the $k$ sections of length $2k$ in it does not repeat vertices. This can be done by arbitrarily removing an edge $i_0i_1$ in the first cycle, then removing one of the two edges incident to $i_1$ in the second cycle (call $i_2$ the other end of that edge), then one incident to $i_2$ in the third cycle, etc.

Then a corridor is constructed as shown in Figure~\ref{fig:corridor}: each of the paths of length $2k$ is joined to one of the $k$ vertices of $V_2=\{j_1,j_2,\dots,j_k\}$ that we have not yet used, and the different sections are glued together with $k$ additional triangles. That this is a corridor, of length $2k^2+k-2\ge \frac{2}{9}(n-1)^2$, follows from the fact that no edge is used twice, so that the adjacency graph is indeed the path that we see in the figure. 
\begin{figure}
\resizebox{0.6\linewidth}{!}{
  \input{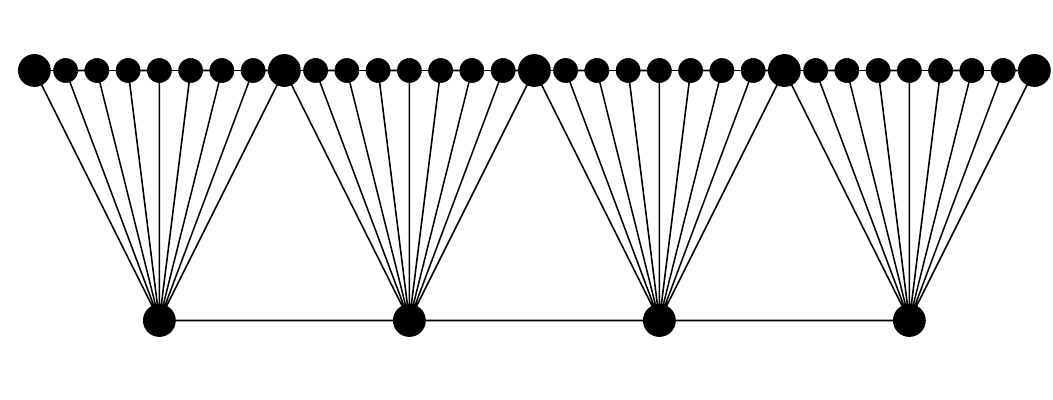_t}
}
\caption{A $2$-corridor of length $\frac{2}{9}(n-1)^2$}
\label{fig:corridor}
\end{figure}
If $n=3k+2$ or $3k+3$ we neglect one or two points of $n$.
\qed
\end{proof}

In order to get lower bounds in higher dimension we use the \emph{join} operation. Remember that the join of two simplicial complexes $C_1$ and $C_2$ on disjoint sets of vertices is
\[
C_1*C_2:=\{v_1\cup v_2 : v_1 \in C_1, v_2\in C_2\}.
\] 
The join of complexes of dimensions $d_1-1$ and $d_2-1$ has dimension $d_1+d_2-1$ and the adjacency graph of a join is the cartesian product of the adjacency graphs of the factors; that is
\[
\graph(C_1*C_2)=\graph(C_1) \square \graph(C_2),
\]
where the Cartesian product $\graph(C_1) \square \graph(C_2)$ of two graphs $G_1=(V_1,E_1)$ and $G_2=(V_2,E_2)$ is the graph with vertex set $V_1\dot\cup V_2$ in which $(u_1,u_2)$ is adjacent to $(v_1,v_2)$ if either $u_1=v_1$ and $(u_2,v_2)\in E_2$ or $u_2=v_2$ and $(u_1,v_1)\in E_1$.

\begin{lemma}
\label{lemma:join}
Let $G_1$ and $G_2$ be paths of lengths $l_1$ and $l_2$. Then $G_1\square G_2$ has an induced path of length at least $l_1l_2/2$.
\end{lemma}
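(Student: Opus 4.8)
The plan is to exhibit an explicit induced path in the grid graph $G_1 \square G_2$, which is just the $(l_1+1) \times (l_2+1)$ rectangular grid, using a ``comb'' or ``boustrophedon'' (snake) pattern that leaves gaps so that no chords appear. First I would set up coordinates: label the vertices of $G_1$ as $0,1,\dots,l_1$ and those of $G_2$ as $0,1,\dots,l_2$, so a vertex of the product is a pair $(a,b)$ and two vertices are adjacent iff they differ by $1$ in exactly one coordinate. The naive snake path that traverses column $b=0$ fully, steps to column $b=1$, traverses it fully in reverse, and so on, is \emph{not} induced, because consecutive columns are completely filled and hence every interior vertex has a grid-neighbor in the adjacent column that is not its path-neighbor. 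The fix is standard: use only every other column as a ``full'' column and connect them through a single rung.

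The key construction: take the ``teeth'' of the comb to be the columns $b=0,2,4,\dots$ (the even columns), each traversed completely in the $G_1$-direction, and connect consecutive teeth by a single edge lying in an odd column $b=1,3,5,\dots$, placing that connecting rung alternately at the top ($a=l_1$) and at the bottom ($a=0$) of the grid. Concretely, the path goes: up column $0$ from $(0,0)$ to $(l_1,0)$, across to $(l_1,1)$, across to $(l_1,2)$, down column $2$ to $(0,2)$, across to $(0,3)$, across to $(0,4)$, up column $4$, and so on. I would then verify this is an induced subgraph: two path-vertices in the same even tooth are non-adjacent unless consecutive on the path; a vertex in an even tooth and a vertex in a different tooth differ by at least $2$ in the second coordinate, hence are non-adjacent; and the only vertices used in odd columns are the two endpoints of a single rung, which have no other path-vertex within distance $1$ because the teeth they join are the extreme rows. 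So the subgraph induced on these vertices is exactly the path described.

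Finally I would count the length. With $\lfloor l_2/2 \rfloor + 1$ teeth (indices $0,2,\dots$), each contributing $l_1$ edges, plus $2$ connecting edges per gap between consecutive teeth and $\lfloor l_2/2\rfloor$ such gaps, the total number of edges is at least $l_1(\lfloor l_2/2\rfloor + 1) + 2\lfloor l_2/2\rfloor \ge l_1 l_2 / 2$, which is the claimed bound (one can be slightly more careful about parity of $l_2$ and about whether it is more efficient to use the even or the odd columns as teeth, choosing whichever of $l_1,l_2$ plays the role that maximizes the count, but the bound $l_1 l_2/2$ holds either way). I do not expect any real obstacle here; the only mildly delicate point is making the ``induced'' verification airtight, i.e.\ checking that the single-edge rungs in the odd columns really do not create chords with the full even teeth — this works precisely because the rungs sit at the boundary rows $a=0$ and $a=l_1$, so a rung vertex $(0,2j+1)$ has grid-neighbors $(1,2j+1)$, $(0,2j)$, $(0,2j+2)$, all either off the path or already its path-neighbors.
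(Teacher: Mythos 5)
Your proposal is correct and takes essentially the same approach as the paper: select every other ``line'' of the grid (the paper uses $v_i\times G_2$ for even $i$, you use even columns -- same idea with the roles of $l_1,l_2$ swapped), join consecutive teeth by length-two connectors alternating between the two extreme rows, and count. The paper refers to a figure for the induced-path check; your explicit verification that the rung vertices in odd columns only touch their two path-neighbors is the right way to make that airtight.
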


\begin{proof}
Let the vertices of $l_1$ be $v_1,v_1,v_2,\dots,v_{l_1}$, numbered in the order they appear along the path. Consider every second vertical path $v_i\times G_2$, for even $i=0,2,\dots,\lfloor{l_1/2}\rfloor$ and join these paths to one another by horizontal paths of length two, alternating between the beginning and end of $l_2$ (see the left part of Figure~\ref{fig:snake}). 
This is an induced path in $G_1\square G_2$ of length
\[
(\lfloor l_1/2\rfloor+1)l_2+l_1\ge\frac{l_1l_2}{2}.
\]
\qed
\end{proof}
\begin{figure}
\includegraphics[scale=0.6]{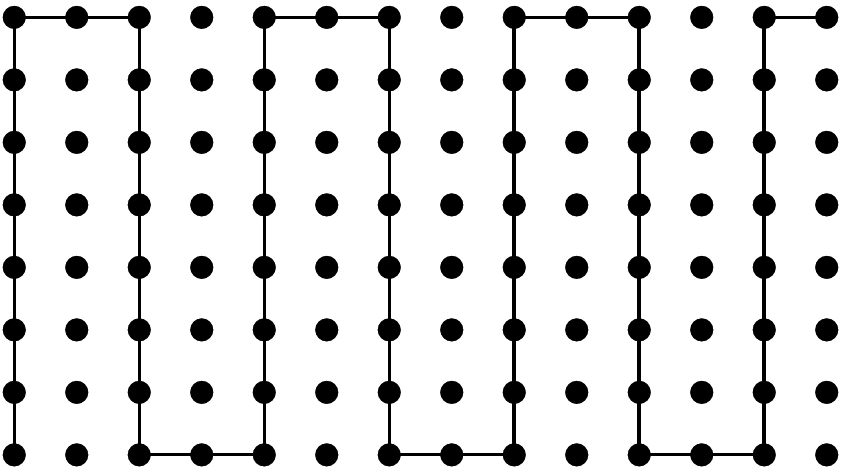}
\qquad \qquad
\includegraphics[scale=0.6]{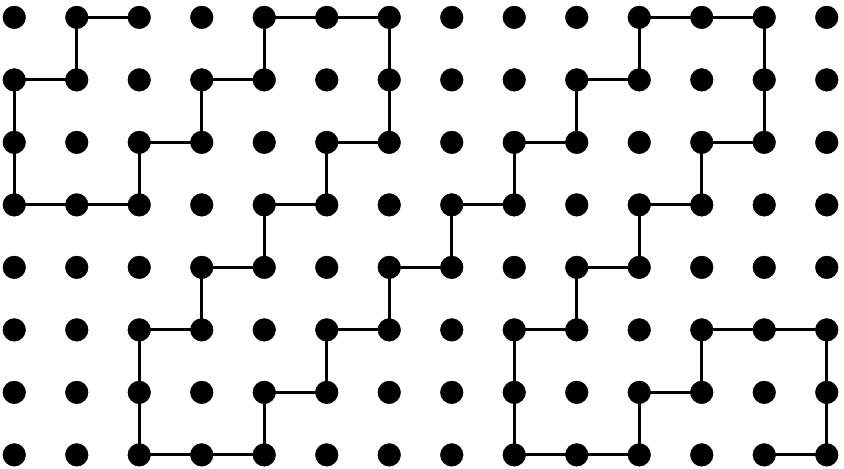}
\caption{Two induced paths in the Cartesian product of two paths}
\label{fig:snake}
\end{figure}

\begin{remark}[Personal communication of G.~Rote]
\rm
The path of the previous lemma uses about  $\frac{1}{2}$ of the vertices in the Cartesian product. This can be improved to $\frac{2}{3}$ using diagonal zig-zag paths instead of vertical paths (see the right part of Figure~\ref{fig:snake}). Indeed, zig-zag paths cover almost completely every two of each 3 diagonal lines of vertices. 
Moreover, $\frac{2}{3}$ is optimal since an induced path in a graph of maximum degree $k$ cannot use more than $k/(2k-2)$ of the vertices. 
\end{remark}

\begin{corollary}
\label{coro:join}
For all $n,d,k\in \naturals$:
\[
H_\simp(kn,kd)\ge \frac{H_\simp(n,d)^k}{2^{k-1}}.
\]
\end{corollary}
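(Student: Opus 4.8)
I would prove Corollary~\ref{coro:join} by realizing $H_\simp(n,d)$ with a corridor, taking the $k$-fold join of that corridor with itself, and exploiting the fact that the adjacency graph of a join is a Cartesian product, so that Lemma~\ref{lemma:join} can be applied $k-1$ times.

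First I would invoke Proposition~\ref{prop:Johnson} (equivalently Corollary~\ref{coro:corridor1}) to fix a pure $(d-1)$-dimensional corridor $C$ on $n$ vertices whose adjacency graph $\graph(C)$ is a path of length $L:=H_\simp(n,d)$. Let $C^{*k}:=C*\dots*C$ denote the join of $k$ copies of $C$ placed on pairwise disjoint $n$-element vertex sets. Then $C^{*k}$ is a pure simplicial complex of dimension $kd-1$ on $kn$ vertices, and iterating the identity $\graph(C_1*C_2)=\graph(C_1)\square\graph(C_2)$ shows that its adjacency graph is the $k$-fold Cartesian power $\graph(C)^{\square k}$ of a path of length $L$.

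Next I would prove by induction on $j$ that $\graph(C)^{\square j}$ contains an induced path of length at least $L^j/2^{j-1}$. The base case $j=1$ is immediate, since $\graph(C)$ is itself a path of length $L$. For the inductive step, suppose $Q\subseteq\graph(C)^{\square j}$ is an induced path of length $\ell\ge L^j/2^{j-1}$. Since $Q$ is an induced subgraph of $\graph(C)^{\square j}$, the product $Q\square\graph(C)$ is an induced subgraph of $\graph(C)^{\square j}\square\graph(C)=\graph(C)^{\square(j+1)}$; applying Lemma~\ref{lemma:join} to the two paths $Q$ and $\graph(C)$ yields, inside $Q\square\graph(C)$, an induced path of length at least $\ell L/2\ge L^{j+1}/2^{j}$, which is then also an induced path in $\graph(C)^{\square(j+1)}$. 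Taking $j=k$ produces an induced path of length $\ge L^k/2^{k-1}$ in the adjacency graph of $C^{*k}$; since such an induced path is the adjacency graph of a corridor subcomplex of $C^{*k}$ (or by appealing once more to Proposition~\ref{prop:Johnson}), we conclude $H_\simp(kn,kd)\ge L^k/2^{k-1}=H_\simp(n,d)^k/2^{k-1}$.

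There is essentially no deep obstacle: all the combinatorial content is already contained in Lemma~\ref{lemma:join} and in the product formula for adjacency graphs of joins. The only point needing a little care is the bookkeeping in the induction, namely that Lemma~\ref{lemma:join} is being applied with one factor being merely an induced path sitting inside a larger graph rather than the whole graph. This is legitimate because the Cartesian product preserves the relation ``is an induced subgraph of'' (if $G_1$ is induced in $H_1$ then $G_1\square G_2$ is induced in $H_1\square G_2$), and an induced path inside an induced subgraph is an induced path in the ambient graph. Alternatively one can bypass the induction and simply observe that the density-$1/2$ snake construction of Lemma~\ref{lemma:join} can be performed once in each of the $k-1$ ``new'' coordinate directions, losing a factor $1/2$ at each step; this is why the losses accumulate only to $2^{k-1}$ rather than to $2^k$.
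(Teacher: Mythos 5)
Your proof is correct and follows essentially the same approach as the paper, which also proceeds by induction on $k$ and applies Lemma~\ref{lemma:join} to joins of corridors. The only cosmetic difference is that the paper's one-line proof joins, at each step, a fresh corridor achieving $H_\simp(n,d)$ to a fresh corridor achieving $H_\simp((k-1)n,(k-1)d)$ — so both factors of the Cartesian product are literal paths and no ``induced subgraph of a product'' bookkeeping is needed — whereas you fix one corridor $C$, work inside the $k$-fold join $C^{*k}$, and therefore need (and correctly supply) the observation that $G_1\,\square\,G_2$ is induced in $H_1\,\square\,G_2$ whenever $G_1$ is induced in $H_1$.
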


\begin{proof}
By induction on $k$, applying Lemma~\ref{lemma:join}  to corridors that achieve the maximum lengths $H_\simp(n,d)$ and $H_\simp((k-1)n,(k-1)d)$. 
\qed
\end{proof}

\begin{corollary}
\label{coro:high-d}
\[
2\left(\frac{1}{3}\left\lfloor\frac{3n}{d}\right\rfloor-1\right)^{2d} \le H_\simp(3n,3d) \le \frac{1}{3d-1}\binom{3n}{ 3d-1}.
\]
In particular, in fixed dimension $d-1$:
\[
\Omega\left(n^{\frac{2d}{3}}\right) \le H_\simp(n,d) \le O(n^{d-1}).
\]
\end{corollary}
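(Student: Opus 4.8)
The plan is to derive both upper bounds straight from Corollary~\ref{coro:corridor1} and both lower bounds from Corollary~\ref{coro:join}, feeding the latter with the planar construction of Theorem~\ref{thm:dimension2}. The upper bounds require essentially no work: $H_\simp(3n,3d)\le\frac{1}{3d-1}\binom{3n}{3d-1}$ is just Corollary~\ref{coro:corridor1} applied with the pair $(3n,3d)$ in place of $(n,d)$ (discarding the harmless $-1$), and $O(n^{d-1})$ is what $\frac{1}{d-1}\binom{n}{d-1}-1$ becomes once the dimension $d-1$ is held fixed and $n\to\infty$.

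For the lower bound I would first record the trivial monotonicity $H_\simp(m,d)\le H_\simp(m+1,d)$, valid because a $(d-1)$-complex on $[m]$ is also one on $[m+1]$ with the same adjacency graph. Setting $m:=\lfloor 3n/d\rfloor$, so that $dm\le 3n$, this gives $H_\simp(3n,3d)\ge H_\simp(dm,3d)$. Now apply Corollary~\ref{coro:join} with $k=d$ and with the pair $(m,3)$ playing the role of $(n,d)$:
\[
H_\simp(3n,3d)\ \ge\ H_\simp(dm,3d)\ \ge\ \frac{H_\simp(m,3)^{d}}{2^{d-1}}.
\]
Finally substitute the planar lower bound of Theorem~\ref{thm:dimension2} in the form $H_\simp(m,3)\ge\frac{2}{9}(m-3)^2=2\left(\tfrac{1}{3}m-1\right)^2$; plugging this in and collecting the powers of $2$ (the $2^d$ from raising to the $d$-th power cancels $2^{d-1}$ down to a single factor $2$) yields exactly $2\left(\tfrac{1}{3}\lfloor 3n/d\rfloor-1\right)^{2d}$, which is the claimed bound.

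The ``in particular'' statement then follows by asymptotic bookkeeping. With the dimension $3d-1$ fixed, $\lfloor 3n/d\rfloor$ grows linearly in $n$, so the lower bound just proved is $\Omega\!\left(n^{2d}\right)=\Omega\!\left(n^{2(3d)/3}\right)$, i.e.\ $\Omega\!\left(n^{2d/3}\right)$ after renaming the dimension parameter; the matching $O(n^{d-1})$ is again Corollary~\ref{coro:corridor1}. For dimensions not of the form $3d-1$ one absorbs the gap by repeatedly taking the join with a single vertex, which raises both $n$ and the dimension by $1$ without changing the adjacency graph, so that only additive constants are lost, and these vanish in the $\Omega/O$ notation. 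The only place that needs genuine care is this last rounding argument together with the ``neglect one or two vertices'' clause of Theorem~\ref{thm:dimension2}: one must check that they combine to give precisely the constant $2$ and the exact shift by $1$ in the displayed inequality rather than something slightly weaker. Beyond that arithmetic there is no real obstacle, since the exponential separation is already concentrated in Corollary~\ref{coro:join} and Theorem~\ref{thm:dimension2}.
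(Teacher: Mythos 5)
Your proof is correct and follows essentially the same route as the paper: upper bound from Corollary~\ref{coro:corridor1}, lower bound by chaining monotonicity in $n$, Corollary~\ref{coro:join} with $k=d$ on the pair $(\lfloor 3n/d\rfloor,3)$, and the planar bound of Theorem~\ref{thm:dimension2}. One small reassurance: the caution you flag at the end is unnecessary. The ``neglect one or two vertices'' issue is already absorbed into the statement of Theorem~\ref{thm:dimension2} (it is why the theorem reads $\frac{2}{9}(n-3)^2$ rather than $\frac{2}{9}(n-1)^2$), so once you invoke $H_\simp(m,3)\ge\frac{2}{9}(m-3)^2=2(\tfrac{m}{3}-1)^2$ the arithmetic $\bigl(2(\tfrac{m}{3}-1)^2\bigr)^d/2^{d-1}=2(\tfrac{m}{3}-1)^{2d}$ gives the displayed inequality on the nose; there is nothing further to check.
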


\begin{proof}
The upper bound is Corollary~\ref{coro:corridor1}. For the lower bound, Corollary~\ref{coro:join} and Theorem~\ref{thm:dimension2} give:
\[
H_\simp(3n,3d)\ge \frac{H_\simp(\lfloor\frac{3n}{d}\rfloor,3)^d}{2^{d-1}}\ge  \frac{(\frac{2}{9}\left(\left\lfloor\frac{3n}{d}\right\rfloor-3\right)^2)^d}{2^{d-1}}.
\]
\qed
\end{proof}

%






%

\section{Connected layer (multi)-complexes}
\label{sec:clm}

\subsection{A tempting conjecture}

Corollary~\ref{coro:high-d} implies that if we want to prove polynomial diameters for the graphs of simple polytopes (that is, for the adjacency graphs of simplicial polytopes) we cannot hope to do it in the general framework of simplicial complexes. Some combinatorial, topological, or geometric restriction needs to be put on the complexes under scrutiny. Corollary~\ref{coro:corridor1} says that being a pseudomanifold is not enough (in fact, it is no ``loss of generality'') but perhaps being a manifold is. One property that manifolds have and which seems promising is:

\begin{definition}
A pure simplicial complex is called \emph{normal}~\cite{AdiBen:flag-complexes} or \emph{locally strongly connected}~\cite{JosIzm:branched-coverings}
if the link (equivalently, if the star) of every face is strongly connected.
\end{definition}

Here the \emph{link} and the \emph{star} of a face $S$ in a complex $C$ are defined as
\[
\lk_C(S):=\{X\setminus S: S\subseteq X \in C\},
\qquad
\st_C(S):=\{X: s\subseteq X \in C\}.
\]
Clearly, $\st_C(s)= S * \lk_C(S)$. If $C$ is pure then the star and link of every face $S$ are pure, of dimensions $\dim(C)$, and $\dim(C)-|S|$, respectively.

The following remarks are evidence that normality of a simplicial complex is a natural property for studying the Hirsch question:
\begin{itemize}
\item All simplicial spheres and balls and, in fact, all simplicial manifolds, with or without boundary, are normal complexes.

\item Adiprasito and Benedetti~\cite{AdiBen:flag-complexes} (see also Section~\ref{sec:flag-complexes}) have recently shown that normal and \emph{flag} pure simplicial complexes satisfy the Hirsch bound,
where a simplicial complex is called flag if it equals the clique complex of its 1-skeleton. 
Unfortunately, face complexes of simplicial polytopes may not be flag, so this condition is too restrictive for proving the Polynomial Hirsch Conjecture. 

\item The best upper bounds for the diameters of polytopes that we know of (the Kalai-Kleitman bound of $n^{O(\log d)}$ and the Barnette-Larman bound of $O(n2^d)$) can be easily proved for arbitrary normal complexes, as we show below.
\end{itemize}

In what follows, we report on the ideas and partial results obtained in the \texttt{polymath 3} project~\cite{Kalai:polymath3} started by Gil Kalai in October 2010. The main goal of the project was to prove a polynomial bound for the diameter of normal simplicial complexes. But the setting was the following generalization of normal complexes:

\begin{definition}
\rm
A \emph{pure multicomplex} of rank $d$ on $n$ elements is a collection $M$ of multisets of size $d$ of $[n]$ (or of any other set $V$ of size $n$). Here, a multiset is an unordered set of elements of $[n]$ with repetitions allowed. Formally, a multiset of size $d$ can be modeled as a degree $d$ monomial in $\K[x_1,\dots,x_n]$. 
\end{definition}

We keep for multicomplexes the notions defined for complexes, such as \emph{facet}, \emph{face}, \emph{link}, etc. For example, the star and the link of a face $S$ in $M$ are
\[
\lk_M(S):=\{X\setminus S: S\subseteq X \in C\},
\qquad
\st_M(S):=\{X: s\subseteq X \in C\}.
\]
But, of course, set operations have to be understood in the multiset sense. The intersection of two multisets $A$ and $B$ is the multiset that contains each element of $[n]$ with the minimum number of repetitions it has in $A$ or $B$, and the union is the same with minimum replaced to maximum. If multisets are modeled as monomials, intersection and union become $\gcd$ and $\operatorname{lcm}$. We also keep for multicomplexes the definitions of dual graph, diameter, and of being strongly connected or normal.

We now get to the main definition in this section:

\begin{definition}
\label{defi:clm}
A \emph{connected layer multicomplex} (or \emph{\clm}, for short), is a pure multicomplex $M$ together with a partition $M=L_a\dot\cup L_{a+1}\dot\cup 
\dots \dot\cup L_{b}$ (with $a,b\in \integers$, $a\le b$) of its set of facets into \emph{layers} having the following connectedness property:
\begin{equation}
\text{For every mutisubset $S$, the star of $S$ intersects an interval of layers.}
\label{eq:clm}
\end{equation}
The \emph{length} of a \clm~is $b-a$, that is, one less than the number of layers.
\end{definition}

In particular, we are interested in the function:
\[
H_\clm(n,d):=\text{maximum length of \clm's of rank $d$ with $[n]$ elements}.
\]

This definition, and some of the results below, were first introduced by Eisenbrand, H\"ahnle, Razborov, and Rothvo\ss~\cite{EHRR:limits-of-abstraction} except they considered usual \emph{complexes} instead of \emph{multicomplexes}. The generalization to multicomplexes was proposed by H\"ahnle in the 
 \texttt{polymath 3} project~\cite{Kalai:polymath3}, where he also made the following conjecture:

\begin{conjecture}[H\"ahnle-polymath 3~\cite{Kalai:polymath3}]
\label{conj:haehnle}
\[
\forall n,d, \qquad H_\clm(n,d)\le d(n-1).
\]
\end{conjecture}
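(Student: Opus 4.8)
The plan is to prove the conjectured bound $H_\clm(n,d) \le d(n-1)$ by induction, following the template by which the Kalai--Kleitman and Barnette--Larman bounds are proved for normal complexes, but using the connected-layer structure in place of the link structure. First I would set up the natural induction, on $n$ and $d$ simultaneously: the base cases $d=1$ (multisets of size one, so $n$ facets in at most $n$ layers — but actually the connectedness forces them into a single interval, giving length $\le n-1$) and $n=1$ (only one facet, length $0$) should both be immediate. For the inductive step, I would fix a \clm{} $M = L_a \dot\cup \cdots \dot\cup L_b$ of rank $d$ on $[n]$ of maximal length, and consider the "active" elements: for each $i\in[n]$, let $L_{a_i}, \dots, L_{b_i}$ be the (necessarily consecutive, by the defining property~\eqref{eq:clm}) interval of layers met by the star of $\{i\}$. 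The key combinatorial observation to exploit is that every facet uses at least one element of $[n]$, so the intervals $[a_i,b_i]$ cover the whole range $[a,b]$.

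The heart of the argument is a "vertex-figure" or "link" recursion. Passing to $\st_M(\{i\})$ and contracting, one obtains a \clm{} of rank $d-1$ on (at most) $n$ elements — or, if one is willing to lose a vertex, on $n-1$ elements — whose layers are exactly the $L_{a_i},\dots,L_{b_i}$ restricted to $\st_M(\{i\})$. By induction this link-\clm{} has length $b_i - a_i \le (d-1)(n-1)$ (or $(d-1)(n-2)$ in the sharper bookkeeping). Simultaneously, deleting the element $i$ — i.e. passing to the sub-multicomplex of facets not involving $i$ — should give a \clm{} on $n-1$ elements of rank $d$, of length at most $d(n-2)$ by induction. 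The plan is then to choose $i$ cleverly (for instance, the element whose interval $[a_i,b_i]$ contains the "middle" layer, or a greedy choice walking from layer $a$) so that these two sub-bounds can be glued: the total length $b-a$ is at most (the span covered by deleting $i$) plus (the span of the link of $i$) plus a small constant accounting for the overlap, yielding $d(n-2) + (d-1) + (\text{const}) \le d(n-1)$ after the arithmetic works out. This is exactly the shape of the Larman-type telescoping argument, and one would want to reproduce it here with multicomplexes.

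The step I expect to be the main obstacle — and, honestly, the reason this is stated as a conjecture rather than a theorem — is that the naive gluing above does \textbf{not} close: the Barnette--Larman style recursion only yields something like $H_\clm(n,d) \le 2^{d-1} n$, and the Kalai--Kleitman style recursion (bisecting the layer interval and recursing on both halves plus a link) only yields the quasipolynomial $n^{O(\log d)}$. To get the \emph{linear} bound $d(n-1)$ one needs a genuinely new idea that controls how fast new elements can "enter" as one walks through the layers: the hoped-for mechanism is that each of the $n$ elements can be "responsible" for advancing the layer-coordinate by at most $d$ steps before it is exhausted, but making this rigorous seems to require an amortization argument that nobody has yet found. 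So my proposal would honestly be: carry out the induction as above, isolate precisely the inequality that fails to close, and then attempt to strengthen the inductive hypothesis — perhaps tracking not just the length but a weighted sum over elements of how many layers each element's star spans — so that the telescoping becomes tight. I would flag that absent such a strengthening, only the weaker (exponential and quasipolynomial) bounds are provable by these methods, which is consistent with the status reported in Section~\ref{sec:clm}.
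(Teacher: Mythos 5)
The statement is Conjecture~\ref{conj:haehnle}, and your instinct is correct: the paper does not prove it, and no proof is known. You correctly identify that the two available recursions — the Kalai--Kleitman-style bisection (Lemma~\ref{lemma:kk}) and the Barnette--Larman-style decomposition (Lemma~\ref{lemma:bl}) — only yield the quasipolynomial bound of Theorem~\ref{thm:kk} and the exponential bound of Theorem~\ref{thm:bl}, not the conjectured linear bound $d(n-1)$. What the paper does establish in support of the conjecture is the bound for the two extremal cases of \emph{complete} and \emph{injective} \clm's (Proposition~\ref{prop:extremal-clm}), the equality $H_\clm(n,d)=d(n-1)$ whenever $n\le 3$ or $d\le 2$ (Corollary~\ref{coro:smalln-d}), and computational verification for a handful of further small pairs $(n,d)$. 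Your proposed strengthening of the inductive hypothesis — an amortized, per-element accounting of how many layers each element's star can span — is exactly the mechanism that makes the injective case work (there one bounds the number of degree increases of each element by $d$), but no one has succeeded in extending that amortization to general \clm's; so your honest conclusion that only the weaker quasipolynomial and exponential bounds are currently provable is precisely the state of the art reported in Section~\ref{sec:clm}.
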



\begin{remark}
This conjecture would imply, by Proposition~\ref{prop:clm} below, the same bound for the diameter of every normal simplicial multicomplex of dimension $d-1$ on $n$ vertices. In particular, for the graph-diameter of $d$-polytopes with $n$ facets. 
\end{remark}

Let us be more explicit about condition~(\ref{eq:clm}). What we mean is that for every multiset $S$, if there are facets that contain $S$ in layers $L_i$ and $L_j$, then there is also a facet containing $S$ in every intermediate layer. This easily implies:

\begin{proposition}
Let $M$ be a \clm~of rank $d$ on $n$ elements. Then for every face $S$, the link of $S$ in $M$ is a \clm~of rank $d-|S|$ on (at most) $n$ elements, where the $i$-th layer of $\lk_M(S)$ is defined to be $\lk_{L_i}(S)$.
\qed
\end{proposition}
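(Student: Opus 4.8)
The plan is to verify the three conditions that make $\lk_M(S)$, equipped with the layers $\lk_{L_a}(S),\dots,\lk_{L_b}(S)$, a \clm: that it is a pure multicomplex of rank $d-|S|$, that these sets partition its facets, and that they satisfy the interval property~(\ref{eq:clm}). Throughout I would model multisets as monomials, so that $S\subseteq X$ means $S$ divides $X$, the multiset difference $X\setminus S$ is the quotient monomial $X/S$, and the size of a multiset is the degree of the corresponding monomial. The rank statement is then immediate: since $S$ is a face it divides some facet, so $|S|\le d$, and every facet of $\lk_M(S)$ has the form $X/S$ for a facet $X$ of $M$, hence degree $d-|S|$; the underlying vertex set is a subset of $[n]$, so at most $n$ elements occur.

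For the partition, I would use that $X\mapsto X/S$ is a bijection from $\st_M(S)=\{X\in M:S\subseteq X\}$ onto $\lk_M(S)$, with inverse $Y\mapsto SY$ (note that $SY$ always contains $S$ and satisfies $(SY)/S=Y$). Since $M=L_a\dot\cup\cdots\dot\cup L_b$ partitions the facets of $M$, each facet $Y$ of $\lk_M(S)$ has a unique preimage $SY$, lying in exactly one layer $L_i$, so $Y$ lies in $\lk_{L_i}(S)$ for that $i$ and in no other; hence the $\lk_{L_i}(S)$ partition $\lk_M(S)$.

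The substantive step is~(\ref{eq:clm}). Fixing a face $T$ of $\lk_M(S)$, the plan is to prove that for every index $i$,
\[
\st_{\lk_M(S)}(T)\cap\lk_{L_i}(S)\neq\emptyset
\quad\Longleftrightarrow\quad
\st_M(TS)\cap L_i\neq\emptyset ,
\]
where $TS$ is the monomial \emph{product} of $T$ and $S$, i.e.\ the multiset in which each element appears with the \emph{sum} of its multiplicities in $T$ and in $S$ --- crucially \emph{not} the union $T\cup S$, which (as defined above) is the $\operatorname{lcm}$. The one non-formal ingredient is the divisibility lemma: for monomials with $S\mid X$, one has $T\mid(X/S)$ if and only if $TS\mid X$, because both sides say $\deg_e(T)+\deg_e(S)\le\deg_e(X)$ for every $e$. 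Granting it, an element of $\st_{\lk_M(S)}(T)\cap\lk_{L_i}(S)$ is precisely $X/S$ for some $X\in L_i$ with $TS\mid X$ (and $TS\mid X$ automatically forces $S\mid X$), which yields the displayed equivalence. Applying the \clm~property of $M$ to the multiset $TS$, the set of layers of $M$ met by $\st_M(TS)$ is an interval; by the equivalence, so is the set of indices $i$ with $\st_{\lk_M(S)}(T)\cap\lk_{L_i}(S)\neq\emptyset$. Specializing to $T=\emptyset$ shows in addition that $\{i:\lk_{L_i}(S)\neq\emptyset\}$ is an interval, so after discarding the empty outer layers and reindexing one obtains an honest \clm, and the general case of the equivalence is exactly condition~(\ref{eq:clm}) for $\lk_M(S)$.

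I expect the only real obstacle to be the multiset bookkeeping in the third paragraph: correctly identifying $TS$ (sum of multiplicities) rather than $T\cup S$ ($\operatorname{lcm}$) as the multiset to feed into the connectedness hypothesis of $M$, and checking the divisibility lemma. Everything else is formal manipulation of the definitions.
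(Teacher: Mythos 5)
Your proof is correct, and it fills in exactly what the paper omits: the statement is given with no proof (the paper just says the connectedness condition ``easily implies'' it). The substantive step is identifying that $\st_{\lk_M(S)}(T)$ pulls back to $\st_M(TS)$ where $TS$ is the multiset with multiplicities \emph{added} (the monomial product), not $T\cup S$ (the $\operatorname{lcm}$); you correctly flag this and verify the divisibility lemma that makes the pullback work, after which the interval property for $\lk_M(S)$ is the clm property of $M$ applied to $TS$. The only small remark is that you restrict $T$ to be a face of $\lk_M(S)$, but for arbitrary multisubsets $T$ the same equivalence holds with both sides empty, so this costs nothing.
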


Every normal multicomplex $M$ can naturally be turned into a \clm~of length equal to the diameter of $M$. For this, let $X$ and $Y$ be facets of $M$ at distance equal to the diameter of $M$. We layer $M$ by ``distance to $X$''. That is, we let $L_i$ contain all the facets that are at distance $i$ to $X$ in the adjacency graph of $M$ (e.g., $L_0=\{X\}$).  Normality of $M$ implies the connectedness condition (\ref{eq:clm}).
Hence:

\begin{proposition}
\label{prop:clm}
The maximum diameter among all normal multicomplexes of rank $d$ with $n$ elements is smaller or equal than $H_\clm(n,d)$.
\qed
\end{proposition}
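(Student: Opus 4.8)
The plan is to turn the informal layering described in the paragraph just above the statement into an honest \clm. Fix an arbitrary normal multicomplex $M$ of rank $d$ on $[n]$. First I would record that $M$ is automatically strongly connected: the link of the empty face is $M$ itself, and normality asks every link to be strongly connected. Hence, choosing facets $X,Y$ of $M$ with $d(X,Y)=\diam(M)$ (distance in $\graph(M)$), every facet $F$ has a finite distance $d(X,F)$, and we may set $L_i:=\{F : d(X,F)=i\}$ for $i=0,\dots,b$, where $b$ is the eccentricity of $X$. Since $\graph(M)$ is connected, each $L_i$ with $0\le i\le b$ is nonempty, so $M=L_0\dot\cup\cdots\dot\cup L_b$ is a partition of the facet set indexed by the integer interval $[0,b]$, as the definition of a \clm~requires.

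The heart of the argument — and the only place where normality is really used — is checking the connectedness condition~(\ref{eq:clm}): for every multisubset $S$, the set of layers that $\st_M(S)$ meets is an interval. Suppose $S$ is contained in facets $F\in L_i$ and $F'\in L_j$ with $i\le j$. Since $\st_M(S)=S*\lk_M(S)$ and $\lk_M(S)$ is strongly connected by normality, there is a path of facets $F=F_0,F_1,\dots,F_m=F'$ inside $\st_M(S)$ in which consecutive facets differ in a single element. Consecutive facets of $\graph(M)$ have distances to $X$ differing by at most one, so the integer sequence $d(X,F_0),\dots,d(X,F_m)$ moves from $i$ to $j$ in unit steps; by the discrete intermediate value theorem it attains every value $k\in\{i,\dots,j\}$. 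The facet attaining $k$ contains $S$ and lies in $L_k$, so $\st_M(S)$ meets $L_k$. This is exactly~(\ref{eq:clm}), and it is the step I expect to be the main obstacle, although in the end it reduces to the two elementary observations just used (single-element moves change $d(X,\cdot)$ by $\le1$, and the link is connected); it is precisely this that fails for pseudomanifolds that are not normal.

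With~(\ref{eq:clm}) verified, $(M;L_0,\dots,L_b)$ is a \clm~of rank $d$ on $n$ elements, of length $b$. Because $d(X,Y)=\diam(M)$ we get $b\ge\diam(M)$, and because $\diam(M)$ is the maximum distance in $\graph(M)$ we also get $b\le\diam(M)$; hence $b=\diam(M)$. Therefore $H_\clm(n,d)\ge\diam(M)$, and taking the supremum over all normal multicomplexes $M$ of rank $d$ on $n$ elements yields the claimed inequality. (If $H_\clm(n,d)$ is read as allowing \clm's on \emph{at most} $n$ elements, this changes nothing, since $H_\clm$ is monotone in $n$ by adjoining unused elements.) \qed
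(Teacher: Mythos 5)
Your proof is correct and follows the same route the paper sketches in the paragraph immediately preceding the proposition: layer $M$ by graph-distance to a facet $X$ chosen at diameter-realizing distance from some $Y$, and use normality to verify condition~(\ref{eq:clm}). The only thing you add is the (welcome) explicit verification of that condition via the discrete intermediate value theorem applied to a path in $\st_M(S)$ obtained from the strong connectedness of $\lk_M(S)$, which the paper simply asserts.
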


\subsection{Two extremal cases}

We call a \clm~\emph{complete} if its underlying multicomplex is complete; that is, each of the $\binom{n+d-1 }{ d}$ multisubsets of $[n]$ of size $d$ is used in some layer. We call it \emph{injective} if each layer has a single facet; that is, the map $M\to \integers$ that assigns facets to its layers is injective.
These two classes of \clm's are extremal and opposite, in the sense that the complete \clm's have the maximum number of facets for given $n$ and $d$ and injective ones the minimum possible number for a given length. 

\begin{example}[A complete \clm, polymath 3~\cite{Kalai:polymath3}]
\label{exm:complete-clm}
For any $d$ and $n$, consider the 
complete multicomplex of rank $d$ on the set $[n]$.
Consider it layered putting in layer $i$ ($i=d,\dots,nd$) all the facets with sum of elements equal to $i$. This is a \clm{} of length $d(n-1)$.
\end{example}

\begin{example}[An injective \clm, polymath 3~\cite{Kalai:polymath3}]
\label{exm:injective-clm}
For any $d$ and $n$, consider the multicomplex of rank $d$ on $[n]$ consisting of all the multisets using at most two different elements from $[n]$, and consecutive ones. For example, for $n=4$, $d=3$ our multicomplex is
\[
\{111,112,122,222,233,233,333,334,344,444\}.
\]
Consider it layered with the restriction of the layering in the previous example.
This produces an injective \clm~of the same length $d(n-1)$.
\end{example}

\begin{proposition}[polymath 3~\cite{Kalai:polymath3}]
\label{prop:extremal-clm}
\label{prop:injective-clm}
\label{prop:complete-clm}
Let $M=L_a\dot\cup\dots\dot\cup L_b$ be a connected layer multicomplex of rank $d$ on $n$ elements. If $M$ is either complete or injective, then its length is at most $d(n-1)$.
\end{proposition}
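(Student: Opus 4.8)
The plan is to treat the two extremal cases separately, since the constraints they impose on the layering are of very different flavors. For the **complete** case, I would argue that the layering is essentially forced to be (an affine reparametrization of) the ``sum of elements'' layering of Example~\ref{exm:complete-clm}. The key observation is that condition~(\ref{eq:clm}) applied to singleton multisets $S=\{i\}$ controls how much the layer index can change when one unit of $i$ is added or removed from a facet. More precisely, I would show that for each $i\in[n]$ there is an integer $c_i$ such that whenever a facet $X$ and the facet $X'$ obtained by replacing one copy of element $i$ by one copy of element $j$ both lie in $M$, their layers differ by exactly $c_i-c_j$ — this uses that the star of the common submultiset $X\cap X'$ (of size $d-1$) meets an interval of layers and contains both facets, together with the fact that in the complete complex one can move between any two facets by such single swaps while staying inside a bounded band of layers. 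Summing these increments along a path from the all-$1$'s facet to an arbitrary facet shows the layer of a facet is an affine function $\sum_i c_i m_i + \text{const}$ of its multiplicity vector $(m_1,\dots,m_n)$, with $\sum m_i = d$. The spread of such a function over the simplex $\{\sum m_i = d,\ m_i\ge 0\}$ is $d\cdot(\max_i c_i - \min_i c_i)$; since consecutive $c$-values must differ by at least $1$ (else two adjacent layers would coincide or the layering would not be surjective onto an interval) but the layers form a set of $b-a+1$ consecutive integers hit by a complete complex, one extracts $b-a \le d(n-1)$. I would need to be slightly careful that the $c_i$ can be taken to be exactly a permutation-spaced sequence $0,1,\dots,n-1$ in the extremal configuration, but the inequality $b-a\le d(n-1)$ falls out regardless.

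For the **injective** case the combinatorics is dual: here there are exactly $b-a+1$ facets (one per layer), forming a path $X_a, X_{a+1}, \dots, X_b$ in the dual graph, so consecutive facets differ by a single swap. The quantity I would track is, for each element $i\in[n]$, the function $\ell \mapsto m_i(X_\ell)$ giving the multiplicity of $i$ in the facet at layer $\ell$. Condition~(\ref{eq:clm}) applied to $S = \{i,i,\dots,i\}$ ($k$ copies) says that the set of layers $\ell$ with $m_i(X_\ell)\ge k$ is an interval; hence each function $\ell\mapsto m_i(X_\ell)$ is \emph{unimodal} (weakly increasing then weakly decreasing) in $\ell$. Now count the total ``variation'': as $\ell$ runs from $a$ to $b$, each single swap changes $\sum_i |m_i(X_\ell) - m_i(X_{\ell+1})|$ by exactly $2$, so the total variation $\sum_{\ell=a}^{b-1}\sum_i |m_i(X_{\ell+1})-m_i(X_\ell)| = 2(b-a)$. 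On the other hand, by unimodality the variation contributed by a single element $i$ is $2\max_\ell m_i(X_\ell) - m_i(X_a) - m_i(X_b) \le 2\max_\ell m_i(X_\ell) \le 2d$, but we can do better: $\max_\ell m_i(X_\ell)\le d$ and, summing over $i$, $\sum_i \max_\ell m_i(X_\ell)$ is bounded — here one uses that $\sum_i m_i(X_\ell) = d$ for every fixed $\ell$ together with a more refined accounting (the ``ups'' of element $i$ happen over a range of layers disjoint-ish from others). The cleanest route is: total variation $= \sum_i (\text{up-variation of }i) + \sum_i(\text{down-variation of }i)$, and since at each layer transition exactly the mass gained equals the mass lost, $\sum_i(\text{up-variation}) = \sum_i (\text{down-variation}) = b-a$; meanwhile $\sum_i(\text{up-variation of }i) = \sum_i (\max_\ell m_i - m_i(X_a)) \le \sum_i \max_\ell m_i - d$, and one shows $\sum_i \max_\ell m_i(X_\ell) \le d\cdot n - (d-1)$ or similar by a pigeonhole on which layer realizes each maximum, yielding $b-a \le d(n-1)$.

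The main obstacle I anticipate is making the ``affine function'' claim in the complete case fully rigorous: one must verify that the constants $c_i$ are globally well-defined, i.e. that the increments are path-independent, which requires checking a cocycle/consistency condition around the cycles of the swap graph of the complete multicomplex — this is where condition~(\ref{eq:clm}) on the codimension-one faces does the real work, and it is the step most likely to hide a subtlety (for instance, it is essential that the complete multicomplex is used, so that every codimension-one face's star is rich enough to pin down the increment). In the injective case the analogous difficulty is the precise bookkeeping that upgrades the crude bound $\sum_i \max_\ell m_i \le dn$ to something giving exactly $d(n-1)$; I would expect to lose or gain a small additive constant and then tighten by noting that $m_i(X_a)$ and $m_i(X_b)$ cannot all be zero. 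I would also double-check both extremal examples (Examples~\ref{exm:complete-clm} and~\ref{exm:injective-clm}) are recovered by the equality analysis, which serves as a sanity check that no slack was dropped.
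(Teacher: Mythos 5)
Your two cases have quite different statuses.

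For the \textbf{complete} case, the central claim---that the layer of a facet is an affine function of its multiplicity vector, i.e., that replacing one copy of $i$ by one copy of $j$ always changes the layer index by the same amount $c_i-c_j$---is false. Take $n=2$, $d=2$: the complete multicomplex has facets $\{1,1\}$, $\{1,2\}$, $\{2,2\}$; set $L_0=\{\{1,1\},\{1,2\}\}$ and $L_1=\{\{2,2\}\}$. Every star meets an interval, so this is a valid complete \clm, yet the swap $\{1,1\}\to\{1,2\}$ changes the layer by $0$ while $\{1,2\}\to\{2,2\}$ changes it by $1$. Condition~(\ref{eq:clm}) only forces each star to meet an \emph{interval} of layers; it says nothing about where inside that interval a given facet sits, so there is no cocycle/consistency to verify---path-independence simply fails and the affine picture collapses. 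The paper's argument is instead a short double induction: take $X\in L_a$, $Y\in L_b$, pick $i\in X\setminus Y$ and $j\in Y\setminus X$, and let $X'=X\setminus\{i\}\cup\{j\}$ lie in layer $c$. The link of $X\cap X'$ is a rank-$1$ complete \clm{} on at most $n$ elements containing both $X$ and $X'$, which gives $c-a\le n-1$; the link of $j$ is a rank-$(d-1)$ complete \clm{} whose layers include $c$ through $b$, giving $b-c\le (n-1)(d-1)$ by induction. Adding these yields $d(n-1)$. Rescuing your route would require a genuinely new idea.

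For the \textbf{injective} case, your key lemma---unimodality of $\ell\mapsto m_i(X_\ell)$, obtained by applying condition~(\ref{eq:clm}) to the multisets $\{i^k\}$---is exactly the paper's, and the count is close, but two details are off. First, consecutive layers need not contain facets that differ by a single swap: nothing in Definition~\ref{defi:clm} makes adjacent layers adjacent in the dual graph. For example, with $n=3$, $d=2$, the facets $\{1,1\}$, $\{2,3\}$, $\{3,3\}$ in layers $0,1,2$ form a valid injective \clm{} whose first step changes two coordinates. So ``total variation $=2(b-a)$'' is unjustified. Second, you wrote $\sum_i(\text{up-variation of }i)=b-a$, but conservation of mass only gives $\sum_i(\text{up-variation})\ge b-a$: at each step at least one degree strictly increases, because the facets are distinct multisets all of total degree $d$. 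With that inequality in place the argument closes immediately: $b-a\le\sum_i\bigl(\max_\ell m_i(X_\ell)-m_i(X_a)\bigr)\le dn-\sum_i m_i(X_a)=dn-d=d(n-1)$, and no refined bound on $\sum_i\max_\ell m_i$ is needed---the crude $\max_\ell m_i\le d$ already suffices once you remember to subtract $\sum_i m_i(X_a)=d$.
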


\begin{proof}
%
If $M$ is complete, we proceed by induction on $d$, the case $d=1$ being trivial.
Let $X\in L_a$ and $Y\in L_b$ be facets in the first and last layer, respectively, and let $i$ and $j$ be elements in $X\setminus Y$ and $Y\setminus X$ respectively (these formulas have to be understood in the multiset sense. That is, $i\in X\setminus Y$ means that $i$ appears more times in $X$ than in $Y$). Let $X'=X\setminus \{i\} \cup \{j\}$, which must be in some layer $L_c$. Since $X$ and $X'$ differ on a single element, $c-a\le n-1$ (the link of $X\cap X'$ in $M$ is a \clm of rank $d$). On the other hand, the link of $j$ in $M$ is a \clm~of rank $d-1$ on $n$ elements and it intersects (at least) the layers from $c$ to $b$ of $M$, so that $b-c\le (n-1)(d-1)$. Putting this together:
\[
b-a = (b-c) + (c-a) \le (n-1)(d-1) + (n-1) = d(n-1).
\]

For the injective case we observe that the degree of each element $i\in[n]$ in the sequence of layers of $M$ is a unimodal function: it (weakly) increases up to a certain point and then it decreases. In particular, there are at most $d$ steps where the degree of $i$ increases from one layer to the next. On the other hand, at least one degree increases at each step, so the number of steps is at most $dn$. This bound decreases to $dn-d$ if we observe that in the first layer some degrees where already positive; more precisely, the initial sum of degrees is exactly $d$.
\qed
\end{proof}

Proposition~\ref{prop:extremal-clm} is quite remarkable. It shows that in two ``extremal and opposite'' cases of connected layer multicomplexes  we have an upper bound of $d(n-1)$ for their length. And Examples~\ref{exm:complete-clm} and~{exm:injective-clm} show that this bound is attained in both cases. This is, in our opinion,  what makes Conjecture~\ref{conj:haehnle} exciting.

\subsection{Two upper bounds}

Here we show that the two best upper bounds on diameters of polytopes that we know of can actually be proved in the context of \clm's.

\begin{lemma}
\label{lemma:kk}
For every $n$ and $d$ we have
\[
H_\clm(n,d) \le H_\clm\left(\left\lfloor \frac{n-1}{2}\right\rfloor, d \right)+ H_\clm \left(\left\lceil \frac{n-1}{2} \right\rceil, d \right) + H_\clm(n,d-1) + 2.
\]
\end{lemma}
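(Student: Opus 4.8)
The plan is to mimic the classical Kalai–Kleitman argument, adapted to the connected-layer-multicomplex setting. Fix a \clm{} $M = L_a \dot\cup \dots \dot\cup L_b$ of rank $d$ on $[n]$ of length $b-a = H_\clm(n,d)$, and let $X \in L_a$, $Y \in L_b$ be facets in the extreme layers. The key device is to look, for each element $v \in [n]$, at the sub-multicomplex $M_v := \st_M(v)$ consisting of all facets containing $v$; by condition~(\ref{eq:clm}) this intersects an \emph{interval} of layers, and (via the preceding Proposition, identifying $\st_M(v)$ with $v * \lk_M(v)$) it is itself a \clm{} of rank $d-1$ on at most $n$ elements, hence spans at most $H_\clm(n,d-1)$ consecutive layers. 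So if I can find \emph{one} element $v$ that occurs in some facet of layer $L_i$ and also in some facet of layer $L_j$, then $|i-j| \le H_\clm(n,d-1)$.

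First I would split the layer range into a ``left half'' $\mathcal{L} = L_a \dot\cup \dots \dot\cup L_m$ and a ``right half'' $\mathcal{R} = L_{m} \dot\cup \dots \dot\cup L_b$ at the midpoint $m$, chosen so that the left half has length $\lfloor (n-1)/2 \rfloor$-ish and the right half $\lceil (n-1)/2 \rceil$-ish — more precisely, I want to choose $m$ so that the set of elements appearing in layer $L_m$ ``reaches back'' not too far and ``reaches forward'' not too far. The cleaner route, which is the one I expect to use: let $U$ be the set of all elements of $[n]$ that appear in \emph{some} facet of the starting layer $L_a$ (so $1 \le |U| \le d$, but what matters is $|U| \le n$), and consider the largest index $j^*$ such that some facet of $L_{j^*}$ still contains an element of $X$. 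Every element of $X$ lies in $M_v$ for $v\in X$, and $M_v$ spans an interval of layers of length $\le H_\clm(n,d-1)$ starting at (or before) $a$; hence $j^* - a \le H_\clm(n,d-1)$. Symmetrically, going backward from $Y$, there is an index $i^*$ with $b - i^* \le H_\clm(n,d-1)$ such that some facet of $L_{i^*}$ contains an element of $Y$.

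The heart of the argument is then a counting/pigeonhole step on the ``middle'' layers between $j^*$ and $i^*$. Restricting $M$ to the layers $L_{j^*}, \dots, L_{i^*}$ and deleting, layer by layer, all occurrences of the (at most $d$, but crucially at most appropriately-many) ``used-up'' elements, one produces two \clm's on disjoint (or at least smaller) vertex sets whose sizes add up to at most $n-1$ — this is where the $\lfloor (n-1)/2\rfloor$ and $\lceil (n-1)/2\rceil$ come from: one partitions the $\le n-1$ remaining elements into the block controlling the first middle half and the block controlling the second, each of size at most those two quantities, and the connectedness condition~(\ref{eq:clm}) restricted to a sub-interval of layers is inherited, so each half has length at most $H_\clm(\lfloor (n-1)/2\rfloor, d)$ resp.\ $H_\clm(\lceil (n-1)/2\rceil, d)$. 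Adding the two ``$+H_\clm(n,d-1)$'' tails, the two middle halves, and a ``$+2$'' to account for the two boundary layers $L_{j^*}$ and $L_{i^*}$ being double-counted / off-by-one at the splits, gives exactly the claimed inequality.

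The main obstacle I anticipate is making the middle splitting step precise: one must argue that on some sub-interval of the middle layers there is an element of $[n]$ common to the first and last layer of that sub-interval (so that that element's star, a rank-$(d-1)$ \clm, bounds the sub-interval's length) — and simultaneously that the \emph{two} sub-intervals can be arranged to be controlled by \emph{disjoint} vertex subsets of sizes $\lfloor (n-1)/2\rfloor$ and $\lceil (n-1)/2\rceil$. The trick is the standard Kalai–Kleitman one: pick the midpoint $m$ of the middle range; the set of elements appearing in layer $L_m$ has size at most $d\le n$, but by pigeonhole one of the two halves of the middle range shares \emph{no} such bookkeeping obstruction, and then one bounds each half recursively. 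I would double-check the exact value of the additive constant ($+2$) against the off-by-one conventions in the definition of length ($b-a$, one less than the number of layers) — that is the only genuinely fiddly point, and everything else is a faithful transcription of the Kalai–Kleitman recursion into the multicomplex language already set up in this section.
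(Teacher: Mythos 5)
Your decomposition is upside-down relative to what the inequality demands, and this is a genuine gap, not an off-by-one. You peel off \emph{two} tails each bounded by $H_\clm(n,d-1)$ (the layers still touching $X$, and the layers still touching $Y$, each controlled by a star of a vertex, i.e.\ a rank-$(d-1)$ link). That already costs $2H_\clm(n,d-1)$, whereas the right-hand side of the lemma has only \emph{one} such term. You then want the middle to split into two pieces on roughly $\lfloor(n-1)/2\rfloor$ and $\lceil(n-1)/2\rceil$ elements, but the ``delete used-up elements layer by layer and get two \clm's on disjoint vertex sets'' step is not a real operation — deleting elements from a multicomplex does not produce a sub-\clm{} with an interval layering, and nothing forces the two middle pieces to live on disjoint element sets of the right sizes. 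So even if the middle step were patched, the best you could hope for is a strictly weaker recursion.

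The paper's proof is the mirror image of yours, and it is what makes the term structure match. Let $i$ be the largest integer such that the \emph{first} $i$ layers use at most $\lfloor(n-1)/2\rfloor$ of the $n$ elements, and let $j$ be the largest integer such that the \emph{last} $j$ layers use at most $\lceil(n-1)/2\rceil$ elements; these two prefixes/suffixes are \clm's of full rank $d$ but on few elements, giving the two $H_\clm(\cdot,d)$ terms. The $k$ remaining middle layers are the ones you control via a link: by maximality, the first $i+1$ layers use $\ge\lfloor(n-1)/2\rfloor+1$ elements and the last $j+1$ layers use $\ge\lceil(n-1)/2\rceil+1$ elements, and since these counts sum to $n+1>n$ there is an element $v$ appearing both near the front and near the back; the interval property (\ref{eq:clm}) then forces $v$ into every middle layer, so $\lk_M(v)$ restricted to the middle is a rank-$(d-1)$ \clm{} on at most $n$ elements of length $k-1\le H_\clm(n,d-1)$. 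Finally $i+j+k-1=(i-1)+(j-1)+(k-1)+2$ gives the additive $2$. In short: the $d$-unchanged terms come from the \emph{tails}, not the middle, and the single $d-1$ term comes from linking at a common element of the middle — the exact opposite of how you assigned roles to the three pieces.
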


\begin{proof}
Let $M$ be a \clm~of rank $d$ on $n$ elements. 
Let $i$ be the largest integer such that the first $i$ layers of $M$ use at most $\lfloor (n-1)/2\rfloor$ of the $n$ elements. 
Let $j$ be the largest integer such that the last $j$ layers of $M$ use at most $\lceil (n-1)/2\rceil$ of the $n$ elements. 
Let $k$ be the remaining number of layers. By construction, there has to be some common element used in all these $k$ layers. Hence:
\[
i-1 \le H_\clm(\lfloor n/2\rfloor, d),\quad  j-1 \le H_\clm(\lceil n/2\rceil, d), \quad  k-1 \le H_\clm(n, d-1).
\]
This gives the bound, since the length of our \clm{} equals
$ i+j+k-1 $.
\qed
\end{proof}

\begin{theorem}[Kalai-Kleitman~\cite{KalKle:quasi-polynomial}, Eisenbrand et al.~\cite{EHRR:limits-of-abstraction}]
\label{thm:kk}
\[
H_\clm(n,d) \le n^{\log_2 d +1}-1
\]
\end{theorem}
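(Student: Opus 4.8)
The plan is to prove the bound $H_\clm(n,d) \le n^{\log_2 d + 1} - 1$ by induction, using Lemma~\ref{lemma:kk} as the recursive engine. Write $f(n,d) := H_\clm(n,d)$ and let $g(n,d) := n^{\log_2 d + 1} - 1$ be the claimed upper bound. The base cases are $d = 1$, where a \clm{} of rank $1$ on $n$ elements has all its (singleton) facets differing by single elements, so its length is at most $n-1 = g(n,1)$; and small $n$, where $g(n,d) \ge d(n-1)$ can be checked to dominate (in fact for $n \le 2$ the recursion from Lemma~\ref{lemma:kk} degenerates since $\lfloor (n-1)/2 \rfloor$ and $\lceil (n-1)/2 \rceil$ are small). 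For the inductive step, I would substitute the inductive hypothesis into Lemma~\ref{lemma:kk}:
\[
f(n,d) \le g\!\left(\left\lfloor \tfrac{n-1}{2} \right\rfloor, d\right) + g\!\left(\left\lceil \tfrac{n-1}{2} \right\rceil, d\right) + g(n, d-1) + 2,
\]
and the goal becomes showing the right-hand side is at most $g(n,d) = n^{\log_2 d + 1} - 1$.

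The key estimate is to bound the first two terms. Dropping floors and ceilings for the heuristic (and handling them carefully in the real proof), each of $g(\lfloor (n-1)/2 \rfloor, d)$ and $g(\lceil (n-1)/2 \rceil, d)$ is at most roughly $\left(\tfrac{n-1}{2}\right)^{\log_2 d + 1} - 1 \le \tfrac{1}{2}(n-1)^{\log_2 d + 1} - 1$, because $(1/2)^{\log_2 d + 1} = 1/(2d) \le 1/2$. So the sum of the first two terms is at most $(n-1)^{\log_2 d + 1} - 2$. For the third term, $g(n, d-1) = n^{\log_2(d-1) + 1} - 1 \le n^{\log_2 d + 1} - 1$. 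Adding the $+2$, I would obtain
\[
f(n,d) \le (n-1)^{\log_2 d + 1} - 2 + n^{\log_2 d + 1} - 1 + 2 = (n-1)^{\log_2 d + 1} + n^{\log_2 d + 1} - 1,
\]
which is visibly larger than $g(n,d)$ — so the naive split is too lossy. The fix is to be more careful: one wants the sum of the two halved terms to come out to roughly $\tfrac{1}{2}\cdot n^{\log_2 d + 1}$ (not $(n-1)^{\log_2 d+1}$), leaving room of about $\tfrac12 n^{\log_2 d + 1}$ for the $g(n,d-1)$ term. Since $\log_2(d-1) + 1 = \log_2(2(d-1)) = \log_2(d - \text{something})$ and we need $n^{\log_2 d + 1} - n^{\log_2(d-1)+1} \ge \tfrac12 n^{\log_2 d +1}$, i.e. $n^{\log_2(d-1)+1} \le \tfrac12 n^{\log_2 d + 1}$, i.e. $n^{\log_2 d - \log_2(d-1)} \ge 2$, i.e. $n^{\log_2 \frac{d}{d-1}} \ge 2$ — which holds only for $n$ not too small relative to $d$. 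This is the delicate point.

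The cleanest route, and the one I expect to work, is to use the slightly weaker-looking but self-propagating hypothesis in the form where $n$ appears as $n$ (not $n-1$) in the halving terms — that is, to first replace Lemma~\ref{lemma:kk} with the looser but simpler $f(n,d) \le 2 g(\lceil n/2 \rceil, d) + g(n,d-1) + 2$, then use $\lceil n/2 \rceil \le (n+1)/2 \le n/2 \cdot (1 + 1/n)$ and the convexity/monotonicity of $t \mapsto t^{\log_2 d + 1}$ to push through. Concretely: $2 \lceil n/2 \rceil^{\log_2 d + 1} \le 2 (n/2)^{\log_2 d + 1} \cdot (\text{correction}) = \tfrac{1}{d} n^{\log_2 d + 1}\cdot(\text{correction})$, using $2 \cdot 2^{-(\log_2 d + 1)} = 1/d$; combined with $g(n,d-1) \le n^{\log_2 d}$ (note $\log_2(d-1)+1 \le \log_2 d$ when... actually $\log_2(d-1) + 1 = \log_2(2d-2) \le \log_2 d$ fails — rather $\log_2(d-1)+1 < \log_2 d + 1$, and one wants the gap to absorb the constant $+2$ and the correction terms). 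The main obstacle, then, is the bookkeeping with the floors, ceilings, and the $+2$ constant: making sure the induction closes for \emph{all} $n \ge 1$ and $d \ge 1$, and in particular identifying the right finite set of base cases (small $n$, or $n \le$ some function of $d$) where one appeals directly to $H_\clm(n,d) \le d(n-1) \le n^{\log_2 d + 1} - 1$ — an inequality that itself needs a short verification but is standard. I would organize the proof as: (1) reduce Lemma~\ref{lemma:kk} to the simpler recursion; (2) verify the base cases $d=1$ and small $n$ via the $d(n-1)$ bound; (3) carry out the inductive step with the power-function estimates above, tracking the floor/ceiling corrections explicitly.
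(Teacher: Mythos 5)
Your instinct to run induction off Lemma~\ref{lemma:kk} is right, and you do eventually spot the crucial algebraic fact $2\cdot 2^{-(\log_2 d+1)} = 1/d$. But the proposal goes off the rails in two places and never actually closes.

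First, the arithmetic slip that derails your opening attempt: after correctly computing $(1/2)^{\log_2 d + 1} = 1/(2d)$, you immediately throw the factor away by bounding $1/(2d) \le 1/2$. That is exactly the $d$ you need. Keeping it, the two halved terms sum to roughly $\tfrac{1}{d}\,n^{\log_2 d + 1}$, not $(n-1)^{\log_2 d + 1}$, and the calculation is not ``visibly larger than $g(n,d)$'' at all. Second, your ``fix'' then aims at the wrong target: you say you want the halved terms to sum to about $\tfrac12 n^{\log_2 d + 1}$ and hence need $n^{\log_2(d/(d-1))}\ge 2$, which indeed fails for large $d$. But the correct requirement, once the $1/d$ factor is kept, is $n^{\log_2(d-1)+1}\le \tfrac{d-1}{d}\,n^{\log_2 d+1}$, equivalently $n^{\log_2(d/(d-1))}\ge d/(d-1)$, and taking $\log_2$ this reduces to $\log_2 n\ge 1$, i.e.\ $n\ge 2$ --- no dependence on $d$ at all. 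So the ``delicate point'' you flag isn't there; it was an artifact of the wrong target. With the substitution $f(n,d):=H_\clm(n,d)+1$ (which makes Lemma~\ref{lemma:kk} read $f(n,d)\le 2f(\lfloor n/2\rfloor,d)+f(n,d-1)$ with no additive constant and absorbs the trailing ``$-1$''), the direct double induction you are attempting does in fact go through for all $n,d\ge 2$, with base cases $f(n,1)=n$ and $f(1,d)=1$. As written, though, your proposal stops at a three-step plan rather than a proof, and the two errors above would send you down a dead end.

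The paper organizes the induction differently, and it is worth seeing why its route is cleaner. It also sets $f=H_\clm+1$, but instead of inducting simultaneously on $n$ and $d$, it first \emph{unrolls} the recursion in $d$: iterating $f(n,d)\le 2f(\lfloor n/2\rfloor,d)+f(n,d-1)$ down to $d=1$ gives $f(n,d)\le 2\sum_{i=2}^d f(\lfloor n/2\rfloor,i)+f(n,1)\le 2(d-1)f(\lfloor n/2\rfloor,d)+n$, using monotonicity in $d$. Then a \emph{single} induction on $n$ suffices, and the rewrite $(n/2)^{\log_2 d+1}=(2d)^{\log_2 n-1}$ together with the elementary estimate $n\le 2(2d)^{\log_2 n-1}$ (valid for $n,d\ge 2$) absorbs the trailing $+n$ and lands exactly on $(2d)^{\log_2 n}=n^{\log_2 d+1}$. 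This avoids the tightrope walk near $n=2$ and sidesteps all the floor/ceiling bookkeeping you were worried about. In short: your approach is salvageable but contains real errors and is unfinished; the paper's unrolled version is the one to internalize.
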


\begin{proof}
We assume that both $n$ and $d$ are at least equal to $2$, or else the statement is trivial (and the inequality is tight). For later use we observe that, under these constraints:
\begin{equation}
\label{eg:kk1}
{n}\le 2\frac{n^2}{4} = 2\cdot 4^{\log_2 n -1} \le  2(2d)^{\log_2 n -1}.
\end{equation}

Let us now convert the inequality of Lemma~\ref{lemma:kk} in something more usable. Letting $f(n,d):=H_\clm(n,d) +1$ and taking into account that $\left\lfloor \frac{n-1}{2}\right\rfloor\le \left\lceil \frac{n-1}{2} \right\rceil \le \left\lfloor \frac{n}{2}\right\rfloor$ (plus monotonicity of $H_\clm$) we get
\[
f(n,d)\le 2f(\left\lfloor{n/2}\right\rfloor,d) + f(n,d-1).
\]

Applying this recursively we get
\begin{equation}
\label{eg:kk2}
f(n,d)\le 2\sum_{i=2}^df(\left\lfloor{n/2}\right\rfloor,i) + f(n,1)\le 2(d-1) f(\left\lfloor{n/2}\right\rfloor,d) + n.
\end{equation}

Now, by inductive hypothesis,
\[
f(\left\lfloor{n/2}\right\rfloor,d)\le\left(\frac{n}{2}\right)^{\log_2 d +1} =(2d)^{\log_2 n -1}.
\]

Plugging this into~(\ref{eg:kk2}), and using~(\ref{eg:kk1})  gives
\[
f(n,d)\le 2(d-1) (2d)^{\log_2 n -1} + n \le (2d)^{\log_2 n}= n^{\log_2 2d}.
\]
%
\qed
\end{proof}

\begin{lemma}
\label{lemma:bl}
For every \clm~$M$ of rank $d$ on $n$ elements there are $n_1,\dots, n_k$ with $\sum n_i \le 2n-1$ and such that the length of $M$ is bounded above by
\[
H_\clm(n_1,d-1) + \dots + H_\clm(n_k,d-1) + k-1.
\]
\end{lemma}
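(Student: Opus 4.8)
The plan is to mimic the structure of the proof of Lemma~\ref{lemma:kk}, but instead of splitting the layers of $M$ into three groups according to how many of the $n$ elements they use, we split them according to a single distinguished element. First I would fix a \clm~$M=L_a\dot\cup\dots\dot\cup L_b$ of rank $d$ on $n$ elements, and pick some element, say element $1$ of $[n]$. Consider the set of layers that contain a facet using the element $1$. By the connectedness condition~(\ref{eq:clm}) applied to the singleton $S=\{1\}$, this is an interval of layers, say $L_c,\dots,L_e$ with $a\le c\le e\le b$. On this interval, the star of $\{1\}$ is itself (the star in) a \clm; more precisely $\lk_M(\{1\})$ restricted to layers $c$ through $e$ is a \clm~of rank $d-1$ on at most $n$ elements, so $e-c\le H_\clm(n,d-1)$. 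The difficulty is the two ``tails'': the layers $L_a,\dots,L_{c-1}$ before the interval and $L_{e+1},\dots,L_b$ after it, which use none of the facets through element $1$ and hence, as far as element $1$ is concerned, carry no information.

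The key idea to handle the tails is to delete element $1$ and recurse on the rank, but to keep track of the element count carefully. Removing element $1$ from every facet not containing it is not quite the right move (it would change the rank); instead the natural thing is: for each maximal ``block'' of consecutive layers in $M$ that all avoid some fixed element, we get a smaller \clm. More precisely, I would proceed by induction on the number of elements that actually appear in $M$. If some element $v$ is used in \emph{every} layer (i.e.\ its star meets all layers), then $\lk_M(\{v\})$ is a \clm~of rank $d-1$ on at most $n-1$ elements whose length equals that of $M$, and we are done with $k=1$, $n_1=n-1$. Otherwise, pick an element $v$ whose star occupies a proper sub-interval of layers; split $M$ at the two endpoints of that interval into (at most) three consecutive sub-\clm's: a left piece $M_L$, a middle piece $M_C$ (the star of $v$, where $v$ is used), and a right piece $M_R$. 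The middle piece, after deleting $v$, is a \clm~of rank $d-1$ on at most $n$ elements. The left and right pieces use strictly fewer than $n$ elements (they avoid $v$), so one applies the inductive hypothesis to each of them. Summing the three contributions, and being careful with the ``$+1$'' bookkeeping at the two splitting points, gives a decomposition $H_\clm(n_1,d-1)+\dots+H_\clm(n_k,d-1)+(k-1)$.

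The main obstacle — and the point requiring real care — is the bound $\sum n_i\le 2n-1$. The middle block contributes an $n_i\le n$, while the left and right blocks contribute parameters that are smaller, and the recursion on those smaller blocks produces further summands. One has to argue that the total ``element budget'' telescopes correctly: intuitively, the middle block ``uses up'' the full $n$ once, and then the disjointness-ish structure of what remains on the two sides means those sides together can only cost another $n-1$. A clean way to see this is to run the induction on the \emph{number of layers} or on $n$ and set up the statement so that the relevant quantity is $\sum n_i$, checking that when we split off the star of $v$ (on $\le n$ elements, but counted once) the two tails are \clm's on $n'$ and $n''$ elements with $n',n''<n$ and with the recursion guaranteeing $\sum$ over the tails is at most $n'+n''-1\le (n-1)+(n-1)-1$... so I would instead organize the double counting so that the element $v$ at the ``seam'' is the shared one and gets counted in the middle block but saves a unit on each side, yielding exactly $n + (n-1) = 2n-1$. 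Verifying this inequality — i.e.\ that the middle block and the two recursively-decomposed tails together never exceed a total element count of $2n-1$ — is the crux; the rest is the same ``interval of layers'' bookkeeping already used in Lemmas~\ref{lemma:kk} and in Proposition~\ref{prop:extremal-clm}.
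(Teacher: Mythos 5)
Your plan is to pick one element $v$, carve out the interval of layers where $v$ appears as a ``middle block,'' and recurse on the two tails. That is genuinely different from what the paper does, and as written it has a real gap precisely at the place you yourself flag as ``the crux.'' The naive accounting fails: applying the inductive hypothesis to the left tail $M_L$ (on $n_L<n$ elements) gives a decomposition with element budget up to $2n_L-1$, and similarly $2n_R-1$ for the right tail, so the total you control is $n_C + (2n_L-1) + (2n_R-1)$. Even if $M_L$ and $M_R$ are element-disjoint and both avoid $v$ (so $n_L+n_R\le n-1$), this only gives $\le n + 2(n-1)-2 = 3n-4$, not $2n-1$; and $M_L$, $M_R$ need not be element-disjoint at all (an element appearing on both sides of $v$'s interval merely forces it to also appear throughout $M_C$, which is allowed). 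Your ``$n+(n-1)=2n-1$'' telescoping is not backed by an argument, and I don't see how to make it work within the three-way recursive split, because the recursion on each tail can itself double-count nearly all of that tail's elements. (A small additional slip: in the multicomplex setting $\lk_M(\{v\})$ can still use $v$, so it lives on at most $n$ elements, not $n-1$; this is harmless for the bound but worth noting.)

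The paper sidesteps the doubling entirely by \emph{not} recursing. It does a one-pass greedy sweep: let $l_1$ be the \emph{last} layer sharing an element with $L_0$, declare $L_0,\dots,L_{l_1}$ the first block, and repeat from $L_{l_1+1}$. Each block has some element present in all of its layers (the one witnessing the shared element between the block's first and last layers), so its length is at most $H_\clm(n_i,d-1)$. The element count is then a direct combinatorial observation, not an induction: by maximality of each $l_i$ and the interval property, no element can appear in two non-consecutive blocks, so each element is counted at most twice in $\sum n_i$; and any element of $L_0$ would, if it reappeared in a later block, contradict the maximality of $l_1$, so elements of $L_0$ are counted only once, giving the $-1$. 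If you want to keep a recursive flavor, the fix is to make your split two-way and anchored at the left end (take $v$ in the \emph{first} layer and let the middle block run to the last layer containing $v$, then recurse only on the right tail), and strengthen the induction hypothesis to track that first-block elements are counted once — at which point you have essentially re-derived the paper's sweep.
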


\begin{proof}
Let $L_0,\dots, L_N$ be the layers of $M$. Let $l_1$ be the last layer such that $L_0$ and $L_{l_1}$ use some common vertex, and let $n_1$ be the number of vertices used in 
the layers $L_0\cup\dots\cup L_{l_1}$. Clearly, $l_1\le H_\clm(n_1,d-1)$. Apply the same to the remaining layers $L_{l_1+1},\dots,L_N$. That is to say, let $l_2$ be the last layer that 
shares a vertex with $L_{l_1+1}$ and let $n_2$ be the vertices used in $L_{l_1+1},\dots,L_{l_2}$,
etc. 

At the end we have decomposed $M$ into several (say $k$) connected layer multicomplexes so, indeed, its 
length is the sum of the $k$ lengths plus $k-1$. Since the $i$-th sub-multicomplex has the property that it uses $n_i$ elements and one of them is used in all layers, its length is at most $H_\clm(n_i,d-1)$. 

It only remains to be shown that $\sum n_i \le 2n-1$. That $\sum n_i \le 2n$ comes from the fact that
no vertex can be used in more that two of the sub-multicomplexes, by construction of them. The $-1$ from the fact that vertices
used in $L_0$ cannot be used in any sub-multicomplex other than the first one.
\qed
\end{proof}

\begin{theorem}[Larman~\cite{Larman:upper-bound}, Barnette~\cite{Barnette:upper-bound}, Eisenbrand et al.~\cite{EHRR:limits-of-abstraction}]
\label{thm:bl}
\[
H_\clm(n,d) \le  (n-1)2^{d-1}. 
\]
\end{theorem}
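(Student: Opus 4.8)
The plan is to deduce Theorem~\ref{thm:bl} from Lemma~\ref{lemma:bl} by induction on $d$. The base case $d=1$ is trivial: a \clm{} of rank $1$ on $n$ elements has each facet a single element, the degree of each element is unimodal along the layers, and in fact $H_\clm(n,1) = n-1 = (n-1)2^{0}$ directly (or one simply checks the statement holds with equality). So assume the bound $H_\clm(m,d-1) \le (m-1)2^{d-2}$ holds for all $m$ and for rank $d-1$.

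Now take an arbitrary \clm{} $M$ of rank $d$ on $n$ elements and apply Lemma~\ref{lemma:bl}: there exist $n_1,\dots,n_k$ with $\sum_i n_i \le 2n-1$ such that the length of $M$ is at most $\sum_{i=1}^k H_\clm(n_i,d-1) + (k-1)$. Using the inductive hypothesis on each term, this is at most $\sum_{i=1}^k (n_i-1)2^{d-2} + (k-1) = 2^{d-2}\bigl(\sum_i n_i\bigr) - k 2^{d-2} + k - 1 = 2^{d-2}\bigl(\sum_i n_i\bigr) - k(2^{d-2}-1) - 1$. Since $2^{d-2}-1 \ge 0$ for $d\ge 2$ and $k\ge 1$, the expression is maximized (over the freedom left to us) by taking $\sum_i n_i$ as large as allowed, namely $2n-1$, and $k$ as small as possible, namely $k=1$; but we should be a little careful, since $k=1$ forces $n_1 \le 2n-1$ while possibly $n_1 \le n$ in that degenerate case. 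The clean way is to keep $k$ general: we get length $\le 2^{d-2}(2n-1) - (2^{d-2}-1) - 1 = 2^{d-1} n - 2^{d-2} - 2^{d-2} + 1 - 1 = 2^{d-1}n - 2^{d-1} = (n-1)2^{d-1}$, which is exactly the claimed bound. (For $k\ge 2$ the subtracted term $k(2^{d-2}-1)$ is only larger, so the bound still holds.) This completes the induction.

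The only real subtlety — and the step I would double-check most carefully — is the bookkeeping that turns $\sum n_i \le 2n-1$ into the clean constant $(n-1)2^{d-1}$ rather than something slightly weaker like $n\,2^{d-1}$. The factor-of-two slack in $\sum n_i$ (each vertex used in at most two blocks) is exactly what a doubling-per-dimension recursion can absorb, and the extra $-1$ coming from ``vertices in $L_0$ appear in only the first block'' is precisely what shaves $n\,2^{d-1}$ down to $(n-1)2^{d-1}$; losing track of that $-1$ would give a cosmetically worse bound. A secondary point worth stating explicitly is that $H_\clm$ is monotone in $n$ (so that the inductive hypothesis can be applied with each $n_i \le n$, or indeed with any value), which is immediate since any \clm{} on $n_i$ elements is also one on $n$ elements. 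No topological or geometric input is needed at all; everything is a consequence of the combinatorial connectedness axiom~(\ref{eq:clm}) via Lemma~\ref{lemma:bl}.
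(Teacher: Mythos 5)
Your proof is correct and follows essentially the same route as the paper: induction on $d$ with base case $d=1$, applying Lemma~\ref{lemma:bl} to decompose, invoking the inductive hypothesis on each piece, and then doing the bookkeeping with $\sum n_i \le 2n-1$ and $k\ge 1$ to land exactly on $(n-1)2^{d-1}$. The paper's computation organizes the final algebra as $(2n-k-1)2^{d-2}+k-1 = (n-1)2^{d-1}-(k-1)(2^{d-2}-1)$, but this is the same manipulation you perform, just written more compactly.
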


\begin{proof}
By induction on $d$, with the case $d=1$ being trivial. For the general case, let $M$ be a \clm~of maximal length equal to $H_\clm(n,d)$ and use the decomposition of Lemma~\ref{lemma:bl}. This gives:
\begin{eqnarray*}
H_\clm(n,d) &\le &\sum_{i=1}^k H_\clm(n_i,d-1) + k-1\\
&\le& \sum_{i=1}^k (n_i-1)2^{d-2} + k-1\\
&\le& (2n-k-1)2^{d-2} + k-1 \\
&= &(n-1)2^{d-1}-(k-1)(2^{d-2}-1) \le (n-1)2^{d-1}.
\end{eqnarray*}
\qed
\end{proof}

{\color{red}
}

\begin{corollary}
\label{coro:n=3}
\label{coro:smalln-d}
If $n\le 3$ or $d\le 2$, then
\[
H_\clm(n,d)=(n-1)d.
\]
\end{corollary}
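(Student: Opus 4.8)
The plan is to establish the two inequalities $H_\clm(n,d)\ge(n-1)d$ and $H_\clm(n,d)\le(n-1)d$ separately, in each of the two regimes $n\le 3$ and $d\le 2$. The lower bound is already available for all $n$ and $d$: Example~\ref{exm:complete-clm} exhibits a \clm{} of rank $d$ on $[n]$ of length exactly $d(n-1)$ (layer $i$ collecting the multisets with element-sum $i$, for $i=d,\dots,nd$). So the entire content is the matching upper bound $H_\clm(n,d)\le(n-1)d$ in the two stated cases, and for this I would simply invoke the results already proved in the excerpt.

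First I would dispose of $d\le 2$. The case $d=1$ is trivial (a rank-$1$ multicomplex on $[n]$ is a subset of the $n$ singletons, its dual graph is a subgraph of $K_n$, and any induced path has at most $n-1$ edges), giving $H_\clm(n,1)=n-1=(n-1)\cdot 1$. For $d=2$, apply Theorem~\ref{thm:bl}: $H_\clm(n,2)\le(n-1)2^{2-1}=2(n-1)$, which is exactly the claimed bound $(n-1)d$ with $d=2$; combined with the lower bound from Example~\ref{exm:complete-clm} this gives equality.

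Next I would handle $n\le 3$. The cases $n=1$ and $n=2$ are immediate: for $n=1$ there is a single multiset and length $0=(1-1)d$; for $n=2$, the complete rank-$d$ multicomplex on $\{1,2\}$ has $d+1$ facets $x_1^d, x_1^{d-1}x_2,\dots,x_2^d$, its dual graph is a path of length $d=(2-1)d$, and one checks no \clm{} on two elements can be longer (each layer of an injective-type \clm{} on two elements changes the exponent of element $1$ by one unit, and the degree of element $1$ is unimodal as in the injective case of Proposition~\ref{prop:extremal-clm}, so there are at most $d$ steps; a non-injective \clm{} on $2$ elements is no longer than the injective reduction). The substantive case is $n=3$: here I would apply Lemma~\ref{lemma:kk}, which with $n=3$ reads
\[
H_\clm(3,d)\le H_\clm(1,d)+H_\clm(1,d)+H_\clm(3,d-1)+2 = H_\clm(3,d-1)+2,
\]
using $\lfloor(3-1)/2\rfloor=\lceil(3-1)/2\rceil=1$ and $H_\clm(1,d)=0$. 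Starting from $H_\clm(3,1)=2$ (the $d=1$ case above) and inducting on $d$ gives $H_\clm(3,d)\le 2+2(d-1)=2d=(3-1)d$. Together with the lower bound this yields $H_\clm(3,d)=(3-1)d$.

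**Main obstacle.** There is essentially no deep obstacle: every ingredient (the complete-\clm{} construction, Lemma~\ref{lemma:kk}, Theorem~\ref{thm:bl}, the unimodality argument from Proposition~\ref{prop:extremal-clm}) is already in hand, and the corollary is a bookkeeping assembly of these. The only point requiring mild care is the base-case analysis for very small $n$ and $d$ — making sure the trivial cases $d=1$ and $n\le 2$ are stated cleanly so that the inductions in the $n=3$ and $d=2$ arguments have a correct starting point, and checking that the floor/ceiling in Lemma~\ref{lemma:kk} really does collapse to $H_\clm(1,d)=0$ when $n=3$. I would present the proof as: "lower bound from Example~\ref{exm:complete-clm}; upper bound for $d\le2$ from Theorem~\ref{thm:bl}; upper bound for $n\le3$ by induction on $d$ via Lemma~\ref{lemma:kk}," which is short enough to include in full.
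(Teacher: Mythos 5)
Your proof is correct and follows essentially the same route as the paper: lower bound from Example~\ref{exm:complete-clm}, upper bound for $d=2$ from Theorem~\ref{thm:bl}, and upper bound for $n=3$ by unrolling the recursion of Lemma~\ref{lemma:kk}. The only difference is in the base cases: the paper disposes of $n\le 2$ and $d=1$ at once by noting that there $(n-1)d+1=\binom{n+d-1}{d}$ is the total number of facets of the complete multicomplex (hence a trivial bound on the number of layers), whereas you treat them separately; note also that your ``induced path in $K_n$'' phrasing for $d=1$ is not quite the right frame, since the length of a \clm{} is the number of layers minus one rather than a dual-graph path length, but the facet-count bound you implicitly use still gives $H_\clm(n,1)\le n-1$.
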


\begin{proof}
By Examples~\ref{exm:complete-clm} and~\ref{exm:injective-clm} we only need to prove the upper bound. In the cases 
$n\le 2$ or $d=1$ we have that $(n-1)d+1=\binom{n+d-1}{d}$, which is the number of facets in the complete multicomplex, so the bound is trivial. The case $d=2$ follows from Theorem~\ref{thm:bl} and the case $n=3$ from Lemma~\ref{lemma:kk}.
%
\qed
\end{proof}

Besides the values in this corollary, N.~H\"ahnle~\cite{Kalai:polymath3} has  verified Conjecture~\ref{conj:haehnle} for all values of $(n,d)$ in or below $\{(4,13), (5,7), (6,5), (7,4), (8,3) \}$.

\subsection{Variations on the theme of \clm's}
What is contained above are the main properties and results on connected layer multicomplexes and, in particular, the main outcome of the polymath 3 project. But it may be worth mentioning other related ideas, questions and loose ends.

\subsubsection*{Complexes versus multicomplexes}

Connected layer multicomplex were introduced in~\cite{Kalai:polymath3} based on previous work of Eisenbrand, H\"ahnle, Razborov, and Rothvo\ss~\cite{EHRR:limits-of-abstraction} in which they introduced \emph{connected layer complexes} (under the name connected layer \emph{families}). The definition is exactly the same as Definition~\ref{defi:clm} except $M$ is now a pure simplicial complex, rather than a multicomplex.
Since the concept is more restricted, all the upper bounds that we proved for \clm's 
(Proposition~\ref{prop:extremal-clm}, Theorems~\ref{thm:kk} and~\ref{thm:bl}) are still valid
for connected layer complexes. The question is whether multicomplexes allow for longer objects than complexes. The following result says that ``not much more''. In it, $H_\clc(n,d)$ denotes the maximum length of connected layer complexes of rank $d$ on $n$ elements.

\begin{theorem}[polymath 3~\cite{Kalai:polymath3}]
\label{thm:multi}
\[
H_\clc(n,d) \le H_\clm(n,d) \le H_\clc(nd,d).
\]
\end{theorem}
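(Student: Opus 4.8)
The plan is to prove the two inequalities separately. The left inequality $H_\clc(n,d) \le H_\clm(n,d)$ is immediate: every connected layer complex is in particular a connected layer multicomplex (a set being a special kind of multiset, with all multiplicities $0$ or $1$, and the connectedness condition~(\ref{eq:clm}) is literally the same statement), so the maximum length over the smaller class cannot exceed the maximum over the larger one. No work is needed here beyond remarking that the definitions nest.

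The substance is the right inequality $H_\clm(n,d) \le H_\clc(nd,d)$. Here I would start from a \clm{} $M = L_a \dot\cup \dots \dot\cup L_b$ of rank $d$ on $[n]$ achieving length $H_\clm(n,d)$, and construct from it a genuine (multiplicity-free) connected layer \emph{complex} $M'$ of rank $d$ on the ground set $[n] \times [d]$, which has $nd$ elements, of at least the same length. The natural device is to ``split'' each element $i \in [n]$ into $d$ distinct clones $(i,1), \dots, (i,d)$, and to encode a multiset $X$ of size $d$ on $[n]$, in which $i$ occurs with multiplicity $m_i$, as an honest $d$-subset of $[n]\times[d]$ by using the clones $(i,1), \dots, (i,m_i)$. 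That is, $\phi(X) := \{ (i,t) : 1 \le t \le m_i(X) \}$, which has exactly $\sum_i m_i = d$ elements. Applying $\phi$ facet-by-facet turns $M$ into a pure $(d-1)$-complex $M'$ on $nd$ vertices, with layers $L_i' := \phi(L_i)$.

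The key steps are then: (1) check that $\phi$ is injective on facets, so the length (number of layers minus one) is preserved — distinct multisets have distinct multiplicity vectors, hence distinct images, so $\phi$ is a bijection $M \to M'$; (2) check that $\phi$ carries the adjacency/dual graph of $M$ into that of $M'$ in a length-nondecreasing way, i.e. the layering $L_0', \dots, L_N'$ of $M'$ is at least as long as the original — this is automatic from (1) since the number of layers is unchanged; and (3) the real point: verify that $M'$ with layers $L_i'$ satisfies the connected-layer condition~(\ref{eq:clm}), namely that for every subset $S' \subseteq [n]\times[d]$ the set of layers of $M'$ meeting $\st_{M'}(S')$ is an interval. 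The subtlety is that the clones are \emph{not} interchangeable in $M'$ even though in $M$ they came from the same element, so a priori $S'$ can be an ``illegal'' clone-set (e.g. $\{(i,2)\}$ without $(i,1)$) whose star is empty or behaves oddly. I expect this verification to be the main obstacle, and the way to handle it is a case analysis on $S'$: if $S'$ is the image $\phi(T)$ of an honest multiset $T$ (equivalently, if $S'$ is ``downward closed in the clone index'' for every $i$, i.e. $(i,t)\in S' \Rightarrow (i,t-1)\in S'$), then a facet of $M'$ contains $S'$ iff the corresponding facet of $M$ contains $T$, so the interval property for $S'$ in $M'$ follows from the interval property for $T$ in $M$; and if $S'$ is not of this form, then $\st_{M'}(S') \subseteq \st_{M'}(S'')$ where $S''$ is obtained by ``pushing down'' the clone indices (replacing each used clone set $\{(i,t):t\in T_i\}$ by $\{(i,t):1\le t\le |T_i|\}$), and in fact one checks $\st_{M'}(S') = \st_{M'}(S'')$ for the relevant layers, reducing to the previous case. (One must be slightly careful that no facet of $M'$ can contain a non-downward-closed clone set, since by construction every facet $\phi(X)$ does use clones $1,\dots,m_i(X)$ consecutively; hence if $S'$ is not downward closed then $\st_{M'}(S')=\emptyset$ and the condition is vacuous.)

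Once~(\ref{eq:clm}) is established for $M'$, we have exhibited a connected layer complex of rank $d$ on $nd$ elements whose length equals $H_\clm(n,d)$, which gives $H_\clm(n,d) \le H_\clc(nd,d)$ and completes the proof. I would double-check the edge case $d=1$ (where multisets and sets coincide and the statement is trivial) and confirm that the cloning does not accidentally merge two layers or create a shortcut in the dual graph, but these are routine once the combinatorial encoding is fixed.
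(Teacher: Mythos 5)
Your construction is exactly the one in the paper: split each $i\in[n]$ into $d$ clones and send a multiset with multiplicity vector $(k_1,\dots,k_n)$ to $\{(1,1),\dots,(1,k_1),\dots,(n,1),\dots,(n,k_n)\}$. The paper leaves the verification of condition~(\ref{eq:clm}) for the resulting complex as an exercise; you attempt to supply it, which is good, but your treatment of the non--downward-closed case contains a genuine error.

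The parenthetical claim that $\st_{M'}(S')=\emptyset$ whenever $S'$ is not downward closed is false. A facet $\phi(X)$ is itself downward closed, but that does not prevent it from containing non--downward-closed \emph{subsets}: e.g.\ with $d=2$, $X=\{1,1\}$ gives $\phi(X)=\{(1,1),(1,2)\}$, which contains $S'=\{(1,2)\}$. So the ``vacuous'' escape does not work. Your earlier ``push-down'' replacing each $T_i=\{t:(i,t)\in S'\}$ by an initial segment \emph{of the same cardinality} is also not quite right: $\phi(X)\supseteq S'$ iff $m_i(X)\ge\max T_i$ for every $i$, whereas $\phi(X)\supseteq S''$ (your $S''$) iff $m_i(X)\ge|T_i|$, so you only get $\st_{M'}(S')\subseteq\st_{M'}(S'')$, not the equality you assert. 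The correct reduction is to set $S'''=\phi(T)$ where $T$ is the multiset with $m_i(T)=\max T_i$ (not $|T_i|$); then $\phi(X)\supseteq S'$ iff $X\supseteq T$ as multisets, so $\st_{M'}(S')=\phi\bigl(\st_M(T)\bigr)$ and the interval property for $S'$ in $M'$ is inherited from that of $T$ in $M$. With this one-line fix the argument is complete and coincides with what the paper intends.
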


\begin{proof}
The first inequality is obvious. For the second one, let $M$ be a \clm{} of rank $d$ on $n$ elements achieving $H_\clm(n,d)$. We can construct a connected layer complex of the same length and rank
on the set $[n]\times [d]$ simply by the following substitution of multisets to sets:
\[
\{1^{k_1},\dots, n^{k_n}\} \mapsto \{(1,1),\dots,(1,{k_1}),\dots,  (n,1),\dots, (n,{k_n})\}.
\]
\qed
\end{proof}

As further evidence, one of the main results of~\cite{EHRR:limits-of-abstraction} is the construction of connected layer complexes showing that
\[
H_\clc(4d,d)\ge \Omega(d^2/\log d),
\]
which is not far from the upper bound $(4d-1)d$ in Conjecture~\ref{conj:haehnle}. 

Similarly, in the polymath 3 project it was shown that
\[
2n-O(\sqrt n) \le H_\clc(n,2)\le H_\clm(n,2) =2n-2.
\]
These inequalities are interesting for two reasons. On the one hand, they point again into the direction of  $H_\clm(n,d)$ and $H_\clc(n,d)$ not being too different. But they also highlight the fact that $H_\clm(n,d)$ is more tractable than $H_\clc(n,d)$. We know the exact value of $H_\clm(n,2)$ but not that of $H_\clc(n,2)$, despite quite some effort devoted  in the polymath 3 project to this very specific question.

\subsubsection*{Specific values for small $n$ or $d$}

Not much is known on this besides Corollary~\ref{coro:smalln-d}. In particular, for $H_\clm(n,3)$ we only know
\[
3n-3 \le H_\clm(n,3) \le 4n-4.
\]
The lower bound comes from Examples~\ref{exm:injective-clm} and~\ref{exm:complete-clm}, while the upper bound comes from Theorem~\ref{thm:bl}.
Some effort was devoted (with no success) to deciding which of the two bounds is closer to the truth, since the answer would be an indication of whether $H_\clm(n,3)$ behaves polynomially or not. Of course, the lower bound is simply the value predicted by Conjecture~\ref{conj:haehnle}.

One variation considered in the polymath 3 project, in the hope that it could be simpler, was to drop the restriction that the sets used in connected layer complexes all have the same cardinality. That is, change Definition~\ref{defi:clm} to allow $M$ to be just any family of subsets of $[n]$. (This would not make sense for multisets: unless we pose a bound on the cardinality of the multisets, we can have an infinite number of layers even with $n=1$). Let us denote $H_{\operatorname{np}}(n)$ (for ``non-pure'') the maximum length obtained with them. The following statement summarizes our knowledge about $f(n)$:

\begin{theorem}[polymath 3~\cite{Kalai:polymath3}]
\label{thm:non-pure}
\begin{enumerate}
\item $H_{\operatorname{np}}(n)\le n^ {\log_2n + 1}$ (Kalai-Kleitman bound).
\item $H_{\operatorname{np}}(n)\ge 2n$ for all $n$. In fact, $H_{\operatorname{np}}(n+1)\ge H_{\operatorname{np}}(n)+2$. (Remark: the empty set is allowed to be used as a subset, so $H_{\operatorname{np}}(1)=2$ comes, for example, from $M=\{\emptyset,\{1\}\}$).
\item $H_{\operatorname{np}}(n)=2n$ for $n\le 4$.
\item $H_{\operatorname{np}}(5)\in[11,12]$.
\end{enumerate}
\end{theorem}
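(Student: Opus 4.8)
The plan is to split the four items into two groups. Items (1) and (2) are the non-pure analogues of the Kalai--Kleitman argument (Lemma~\ref{lemma:kk}, Theorem~\ref{thm:kk}) and of the corridor constructions of Examples~\ref{exm:complete-clm}--\ref{exm:injective-clm}, and I would first record the single structural fact that makes both work: applying the connectedness condition~(\ref{eq:clm}) to a singleton $S=\{v\}$ shows that the set of layers in which an element $v$ is used is an interval; moreover, any contiguous block of layers of a connected layer family is again one, and so is the link of a face. Items (3) and (4) are small-$n$ statements proved by finite inspection.

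For (1) I would establish the non-pure version of Lemma~\ref{lemma:kk},
\[
H_{\operatorname{np}}(n)\ \le\ H_{\operatorname{np}}\bigl(\lfloor\tfrac{n-1}{2}\rfloor\bigr)+H_{\operatorname{np}}\bigl(\lceil\tfrac{n-1}{2}\rceil\bigr)+H_{\operatorname{np}}(n-1)+2 ,
\]
and then run the computation in the proof of Theorem~\ref{thm:kk} with $f(n):=H_{\operatorname{np}}(n)+1$, small values of $n$ being checked directly. To prove the recursion, let $L_0,\dots,L_N$ be the layers of a longest family, let $i$ be maximal so that $L_0\cup\dots\cup L_{i-1}$ uses at most $\lfloor(n-1)/2\rfloor$ elements, and $j$ maximal so that $L_{N-j+1}\cup\dots\cup L_N$ uses at most $\lceil(n-1)/2\rceil$ elements (if no such $i$ exists the bound is trivial). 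Then $L_0\cup\dots\cup L_i$ and $L_{N-j}\cup\dots\cup L_N$ use at least $\lfloor(n-1)/2\rfloor+1$ and $\lceil(n-1)/2\rceil+1$ elements respectively; since these numbers sum to $n+1$, some element $v$ lies in both, and by the interval property $v$ is then used in every one of the middle layers $L_i,\dots,L_{N-j}$, say $k$ of them. Hence the link of $v$ inside that middle block is a connected layer family on at most $n-1$ elements and of length $k-1$, so $k-1\le H_{\operatorname{np}}(n-1)$; together with $i-1\le H_{\operatorname{np}}(\lfloor(n-1)/2\rfloor)$ and $j-1\le H_{\operatorname{np}}(\lceil(n-1)/2\rceil)$ for the two outer blocks, the total length $N=(i-1)+(j-1)+(k-1)+2$ of the family is at most the right-hand side above.

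For (2) the cleanest route is an explicit corridor-lengthening. Given a longest family $M$ on $[n]$ with layers $L_0,\dots,L_N$ and writing $v=n+1$, define $M'$ on $[n+1]$ with layers indexed $-1,0,\dots,N+1$ by placing $L_0$ (unchanged, without $v$) in layer $-1$, the coned family $\{F\cup\{v\}:F\in L_m\}$ in layer $m$ for $0\le m\le N$, and $L_N$ (unchanged, without $v$) in layer $N+1$. One checks directly that $M'$ is a connected layer family: $v$ occupies exactly the interval $[0,N]$; an old element whose support in $M$ is $[p,q]$ acquires the support $[p,q]$, possibly enlarged to include $-1$ when $p=0$ and $N+1$ when $q=N$, hence still an interval; and the star of any set $S$ behaves the same way, splitting on whether $v\in S$. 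As $M'$ has length $N+2$, this gives $H_{\operatorname{np}}(n+1)\ge H_{\operatorname{np}}(n)+2$, and with the base case $H_{\operatorname{np}}(1)=2$ (the family $\{\emptyset,\{1\}\}$) we get $H_{\operatorname{np}}(n)\ge 2n$ for all $n$.

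Items (3) and (4) are where the real work lies, and where I expect the main obstacle. There is no clean induction yielding $H_{\operatorname{np}}(n)\le 2n$ — it already fails at $n=5$, where $H_{\operatorname{np}}(5)\ge 11>2\cdot5$ — so the bounds $H_{\operatorname{np}}(n)\le 2n$ for $n\le 4$ and $H_{\operatorname{np}}(5)\le 12$ have to come from an essentially exhaustive case analysis, which I would organise around the interval structure: on a small ground set $[n]$ a family is determined by the layer-interval occupied by each element together with the subsets present in each layer, and the connectedness condition prunes this search enough to finish it by hand for $n\le4$ and by computer for $n=5$. The lower bound $H_{\operatorname{np}}(5)\ge 11$ I would obtain by displaying one explicit family, which must mix cardinalities since the pure constructions of Examples~\ref{exm:complete-clm}--\ref{exm:injective-clm} only reach $2n$. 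Deciding whether $H_{\operatorname{np}}(5)$ equals $11$ or $12$ is precisely the loose end the theorem records, and pushing the enumeration far enough to settle it is, I expect, the hardest single point in the statement.
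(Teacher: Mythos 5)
Your parts (1), (3), and (4) follow the paper's route. Part (1) is indeed the Kalai--Kleitman recursion of Lemma~\ref{lemma:kk} transplanted to the non-pure setting, with the $H_{\operatorname{np}}(n-1)$ middle term arising because passing to the link of the common element now drops one ground-set element rather than one unit of rank; the paper states exactly this without spelling it out. Parts (3) and (4) the paper likewise leaves to finite case analysis that it does not reproduce, although it does write out the explicit eleven-layer family on five elements that you only promise to produce. Part (2) is where you take a genuinely different route. The paper normalizes the maximal family so that its last layer is $\{\emptyset\}$, picks any set $X$ from the penultimate layer, and inserts two new singleton layers $\{X\cup\{n+1\}\}$ and $\{\{n+1\}\}$ just before the final $\{\emptyset\}$; the interval condition is then a short local check, since only the tail is touched and the new element lives in exactly those two consecutive layers. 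You instead cone the entire family by $v=n+1$ and sandwich the coned copy between un-coned copies of $L_0$ and $L_N$; this also adds exactly two layers and your verification of condition~(\ref{eq:clm}) is correct, with the harmless proviso that it needs $N\ge 1$, automatic once $n\ge 1$. Both constructions are sound and give $H_{\operatorname{np}}(n+1)\ge H_{\operatorname{np}}(n)+2$; the paper's is the more economical edit and its check nearly immediate, while yours avoids the preliminary normalization and is pleasantly symmetric, at the price of rewriting the whole family. Neither extracts more than the $+2$ from the single new element.
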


\begin{proof}
The proof of part (1) is exactly the same as in Theorem~\ref{thm:kk} (using, in particular, the recursion of Lemma~\ref{lemma:kk}). 
For part (2) consider a connected layer family of maximal length. Without loss of generality, the last layer of it consists of just the empty set. Let $X$ be one of the sets appearing in the previous to last layer. Then between these two layers the following two can be added, when a new element $n+1$ is introduced: a layer containing only $X\cup\{n+1\}$ and a layer containing only $\{n+1\}$.
The upper bounds in parts (3) and (4) use more and more complicated case-studies as $n$ grows, and we skip them. The lower bound for $H_{\operatorname{np}}(5)$  comes from the following example:
\[
1 \quad
15 \quad
\begin{tabular}{c}14\\ 5\end{tabular} \quad
\begin{tabular}{c}12\\ 35\\ 4\end{tabular} \quad
\begin{tabular}{c}13\\ 25\\ 45\end{tabular} \quad
\begin{tabular}{c}245\\ 3\end{tabular} \quad
\begin{tabular}{c}24\\ 34\end{tabular} \quad
234 \quad
23 \quad
2\quad
\emptyset
\]
\qed
\end{proof}

\subsubsection*{Remembering only the support of layers}
One feature of the proof of the quasi-polynomial 
upper bound for $H_\clm(n,d)$ (Theorem~\ref{thm:kk}) is that it works if we know only the \emph{support} of each layer, meaning by this the union of the facets in each layer, rather than the facets themselves.

This suggests an axiomatics for supports, rather than multicomplexes. The following definition (recursive on $n$ and $N$) was proposed in the polymath 3 project:

\begin{definition}
We say that a sequence $\Gamma=\{ S_1,\dots,S_N\}$ of subsets of a set $V$ with $n$ elements is \emph{legal} if it satisfies the following axioms:
\begin{enumerate}
\item[0.] The only legal sequence on $0$ elements is $\{\emptyset\}$.
\item[1.] $\Gamma$ is convex, meaning that $S_i\cap S_k\subset S_j$ for all $i<j<k$.
\item[2.] Every proper subsequence of $\Gamma$ is legal.
\item[3.] If an element $a\in V$ is not used at all in $\Gamma$, then $\Gamma$ is a legal sequence on $n-1$ elements.

\item[4.] If an element $a$ belongs to every $S_i$ then there are subsets $S_i'\subset S_i\setminus \{a\}$ such that $\{S_1',\dots,S_N'\}$ is a legal sequence on $n-1$ elements.
\end{enumerate}
\end{definition}

We denote by $y(n)$ the maximum length of legal sequences of subsets of $[n]$. (Remark: contrary to previous settings, here ``length'' means ``number of layers'' rather than ``number of layers minus one''. This slight inconsistency does not affect the asymptotics of $y(n)$ and makes the following proofs simpler).

\begin{theorem}[Kalai-Kleitman bound, polymath 3~\cite{Kalai:polymath3}]
\label{thm:kk-y}
\[
y(n)\le n^{\frac{\log_2 n}{2}}.
\]
\end{theorem}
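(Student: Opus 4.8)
The plan is to mimic the proof of Theorem~\ref{thm:kk} (the Kalai--Kleitman bound for $H_\clm$), since the whole point of the ``legal sequences'' axiomatics is that it retains exactly the structure that made that proof work. First I would record the one structural consequence of the axioms that plays the role of Lemma~\ref{lemma:kk}: given a legal sequence $\Gamma = \{S_1,\dots,S_N\}$ on $[n]$, let $i$ be the largest index such that $S_1,\dots,S_i$ together use at most $\lfloor n/2\rfloor$ elements, and let $j$ be the largest index such that $S_{N-j+1},\dots,S_N$ use at most $\lfloor n/2\rfloor$ elements (using $\lceil\cdot\rceil$ on one side if one wants to be careful about parity, exactly as in Lemma~\ref{lemma:kk}). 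Convexity (axiom 1) forces the ``middle'' block of $k = N - i - j$ layers to share a common element $a$: the first $i+1$ layers already use more than $\lfloor n/2\rfloor$ elements and the last $j+1$ layers also use more than $\lfloor n/2\rfloor$ elements, so they must overlap in at least one element, and convexity propagates that element through every intervening layer. By axiom~2 each of the three blocks is itself legal, by axiom~3 the two outer blocks are legal sequences on $\lfloor n/2\rfloor$ elements, and by axiom~4 the middle block (after deleting $a$) is a legal sequence on $n-1$ elements. This yields the recursion
\[
y(n) \le 2\, y\!\left(\left\lfloor n/2\right\rfloor\right) + y(n-1),
\]
the exact analogue of the $f(n,d)$ recursion in the proof of Theorem~\ref{thm:kk}, now with the single parameter $n$ playing the combined role of ``$n$'' and ``$d$''.

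The second step is to solve this recursion. Iterating the ``$+\,y(n-1)$'' term down to $y(1)$ gives
\[
y(n) \le 2 \sum_{m=2}^{n} y\!\left(\left\lfloor m/2\right\rfloor\right) + y(1) \le 2(n-1)\, y\!\left(\left\lfloor n/2\right\rfloor\right) + y(1),
\]
using monotonicity of $y$. Now I would induct on $n$: assuming $y(\lfloor n/2\rfloor) \le (n/2)^{(\log_2 n)/2}$ — here one has to be slightly careful, since the exponent in the inductive hypothesis should be $\tfrac12\log_2\lfloor n/2\rfloor \le \tfrac12(\log_2 n - 1)$, which only helps — we get a bound of roughly $2n \cdot (n/2)^{(\log_2 n - 1)/2}$, and a short calculation (bounding $2n \le (\text{something})^{\log_2 n}$ much as in inequality~(\ref{eg:kk1}), or more simply checking that the lost factors are absorbed by the gain in the exponent) shows this is at most $n^{(\log_2 n)/2}$. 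Small base cases ($n\le 2$, say) have to be checked directly against the stated bound; since the theorem's ``length'' counts layers rather than layers-minus-one, the base cases are clean.

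The main obstacle, and the only place genuine care is needed, is verifying that the axioms really do deliver the recursion — i.e., that the informal argument ``the outer blocks live on half the elements, the middle block loses a shared element'' is fully justified by axioms 0--4 as stated, including getting the floors/ceilings and the additive constants to line up so that the induction closes with the clean exponent $(\log_2 n)/2$ rather than something slightly larger. In particular one must check that axiom~2 (every proper subsequence is legal) is what legitimizes restricting attention to each of the three contiguous blocks, and that axiom~4's passage to subsets $S_i' \subset S_i \setminus\{a\}$ does not destroy legality of the middle block — which is exactly what axiom~4 asserts. Once that bookkeeping is done, the arithmetic is routine and parallels Theorem~\ref{thm:kk} line for line; I would present it compactly, referring back to that proof for the parts that are genuinely identical.
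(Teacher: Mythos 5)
Your derivation of the recursion $y(n)\le 2\,y(\lfloor n/2\rfloor)+y(n-1)$ from the convexity axiom and axioms~2--4 is sound, and it is indeed the natural analogue of Lemma~\ref{lemma:kk}. (The paper itself states Theorem~\ref{thm:kk-y} without proof, so there is no ``official'' argument to compare against; this is the right recursion to aim for.)

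The gap is in the second step. After unrolling the $y(n-1)$ term and invoking monotonicity you arrive at $y(n)\le 2(n-1)\,y(\lfloor n/2\rfloor)+y(1)$, and you then assert that plugging in the inductive bound $y(\lfloor n/2\rfloor)\le (n/2)^{(\log_2 n-1)/2}$ and doing ``a short calculation'' gives $y(n)\le n^{(\log_2 n)/2}$. This is false. Writing $t=\log_2 n$, one has
\[
2n\cdot(n/2)^{(\log_2 n-1)/2}=2^{\,t+1}\cdot 2^{\,(t-1)^2/2}=2^{\,t^2/2+3/2}=2\sqrt{2}\;n^{(\log_2 n)/2},
\]
so the unrolled bound is $\approx 2\sqrt{2}\,n^{(\log_2 n)/2}$, which exceeds the target $n^{(\log_2 n)/2}$ by a multiplicative constant for \emph{every} $n$. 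No amount of massaging absorbs it. The reason the analogous step in the proof of Theorem~\ref{thm:kk} works is that there $d$ is a second free parameter: the leading factor $2(d-1)$ is swallowed by raising the power of the base $(2d)$ from $\log_2 n-1$ to $\log_2 n$, at the cost of increasing the exponent of $n$ by just $1$ in $\log_2(2d)=\log_2 d+1$. In the one-parameter function $y(n)$ there is no analogous slack: $n$ plays both roles, and the factor $2(n-1)$ is comparable to $n^{(\log_2 n)/2}/(n/2)^{(\log_2 n-1)/2}=n/\sqrt{2}$ rather than being dominated by it.

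The fix is to \emph{not} unroll: keep the recursion in the form $y(n)\le 2\,y(\lfloor n/2\rfloor)+y(n-1)$ and perform the induction directly, using the inductive hypothesis on both $\lfloor n/2\rfloor$ and $n-1$. Then one needs
\[
2\sqrt{2}\,\frac{b(n)}{n}+b(n-1)\le b(n),\qquad b(m):=m^{(\log_2 m)/2},
\]
i.e.\ $b(n-1)/b(n)\le 1-2\sqrt{2}/n$, and a calculation along the lines of $\ln b(n)-\ln b(n-1)\ge \log_2(n-1)/n$ shows that this holds once $\log_2 n$ is a bit above $2\sqrt{2}$, roughly for $n\ge 16$. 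The finitely many smaller cases must then be disposed of by hand, and this is not ``clean'': in particular, for $n\le 3$ the bound $n^{(\log_2 n)/2}<n$ is strictly smaller than the evident lower bound provided by the sequence $(\{1\},\{2\},\dots,\{n\})$, so some care is needed about exactly which sequences the axioms admit and whether the theorem is intended with a mild restriction on $n$. At minimum, the casual treatment of base cases in your writeup papers over a real difficulty.
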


Unfortunately, it was soon proved that this new axiomatization was \emph{too general} and that the function $y(n)$ is not polynomial:

\begin{lemma}
For every $n$ and $i$,
\[
y(2(n+i))\ge (i+1) y(n).
\]
\end{lemma}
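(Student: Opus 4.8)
The plan is to build a long legal sequence on $2(n+i)$ elements out of a longest legal sequence on $n$ elements by a ``padding and repetition'' construction, checking each of the five axioms along the way. Let $\Gamma=\{S_1,\dots,S_N\}$ be a legal sequence on a ground set $V$ with $|V|=n$ and $N=y(n)$. I would set aside $2i$ new elements, which I will think of as $i$ disjoint pairs $\{a_1,b_1\},\dots,\{a_i,b_i\}$, so that the new ground set is $V'=V\sqcup\{a_1,b_1,\dots,a_i,b_i\}$ of size $n+2i=2(n+i)-n$; wait — more carefully, to land on exactly $2(n+i)$ elements I would instead take $V'$ of size $2(n+i)$ by duplicating $V$ as well, or simply note that $y$ is monotone in $n$ and it suffices to produce the bound on \emph{some} ground set of that size. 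The core idea is to concatenate $i+1$ copies of $\Gamma$, where between and around the copies one inserts ``buffer'' layers built from the auxiliary elements so that the convexity axiom (axiom 1) is not violated when passing from the end of one copy to the start of the next.

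Concretely, I would form the sequence
\[
\Gamma' \;=\; \Gamma^{(0)} \,,\, B_1 \,,\, \Gamma^{(1)} \,,\, B_2 \,,\, \dots \,,\, B_i \,,\, \Gamma^{(i)},
\]
where each $\Gamma^{(t)}$ is a copy of $\Gamma$ with every set $S_j$ replaced by $S_j \cup T_t$ for a suitable ``tag set'' $T_t$ drawn from the auxiliary elements, and each buffer $B_t$ is a short block (of length at least one, designed to make the total come out to $(i+1)y(n)$, so the $\Gamma^{(t)}$ are used verbatim and the buffers contribute the extra multiplicative room) that transitions the tag from $T_{t-1}$ to $T_t$. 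The tags must be chosen so that (a) within a single copy $\Gamma^{(t)}$ the tag is constant, hence axiom~1 inside a copy reduces to axiom~1 for $\Gamma$, which holds; (b) across two copies, for indices $p<q$ in copy $t$ and $t'>t$, the intersection $(S_p\cup T_t)\cap(S_q\cup T_{t'})$ lies inside every intermediate set — this is where the tags earn their keep: choosing the $T_t$ to be \emph{nested} (say $T_0\supset T_1\supset\cdots$, or a chain under inclusion) forces the relevant intersections to be controlled by whichever tag is smaller, and the buffer layers are exactly the place to realize the chain one element at a time; and (c) axiom~4 still works, since an element common to all of $\Gamma'$ would have to be common to each $\Gamma^{(t)}$ hence (after deleting it) yield a legal sequence on fewer elements by axiom~4 for $\Gamma$, deleted uniformly. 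Axioms 0, 2, 3 are essentially bookkeeping: axiom~3 (an unused element) just reduces $n$, axiom~2 (proper subsequences) follows because any proper subsequence of $\Gamma'$ restricts to proper-or-full subsequences of the pieces, each legal by induction, and axiom~0 is vacuous here.

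The main obstacle — and the step I would spend the most care on — is the convexity axiom across copies, i.e.\ verifying $S_i\cap S_k\subset S_j$ for $i<j<k$ when $i,j,k$ fall in different blocks. One has to simultaneously arrange that the tags decrease along the sequence (so that an old tag element does not reappear and break convexity) and that the buffer blocks are genuine legal sequences in their own right and long enough to give the factor $i+1$ rather than just $i$. I expect the clean bookkeeping to be: the auxiliary elements are introduced as a decreasing chain, each ``turned off'' permanently in one buffer block, so that an auxiliary element present in $S_i$ and $S_k$ (with $i<k$) is necessarily present in all of $S_j$ in between, giving convexity for free for auxiliary elements; and for the original elements of $V$, convexity across copies holds trivially because $S_p\cap S_q\subseteq$ (a set of $\Gamma$) $\subseteq S_j$ can be forced by having the transition layers contain all of $V$. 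Once the construction is pinned down, the length count is immediate: $i+1$ faithful copies of $\Gamma$ (contributing $(i+1)N=(i+1)y(n)$) plus the non-negative contribution of the buffers, all living on $2(n+i)$ elements, yielding $y(2(n+i))\ge (i+1)y(n)$ as claimed.
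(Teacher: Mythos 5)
There is a genuine gap in the convexity step, and it is not a detail you can hope to repair by tweaking buffers. You route all the cross-copy difficulty into the buffer layers ``containing all of $V$,'' but the index $j$ with $p<j<q$ need not lie in a buffer: it can lie inside one of the $\Gamma$-copies. Concretely, take $\Gamma=(\{1\},\{1,2\},\{2\})$, which is convex. In your $\Gamma'$ set $p$ to be the second set of copy $0$, namely $\{1,2\}\cup T_0$; $j$ the third set of copy $0$, namely $\{2\}\cup T_0$; and $q$ the first set of copy $1$, namely $\{1\}\cup T_1$. With nested tags $T_1\subseteq T_0$ disjoint from $V$, one gets $S_p\cap S_q=\{1\}\cup T_1$, which is not contained in $S_j=\{2\}\cup T_0$ because $1\notin S_j$. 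So axiom~1 already fails before one gets to the subtler axioms. The underlying problem is that concatenating genuinely varying copies of $\Gamma$ lets an element of $V$ be abandoned inside a copy and then reappear at the start of the next copy, and no choice of disjoint tags can prevent that. (You are also never forced to resolve the ground-set bookkeeping you flag at the outset; the monotonicity of $y$ that you invoke to paper over it is itself unproven here.)

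The key observation you are missing is that the axioms permit \emph{constant} sequences, and that a constant sequence of length $y(n)$ on $n+1$ elements is already legal: axioms~1--3 are immediate, and axiom~4 is satisfied because deleting the distinguished element from each copy of the set leaves room for an arbitrary legal sequence of length $y(n)$ on the remaining $n$ elements. The paper exploits this directly. Pick disjoint sets $A$ and $B$ of size $n+i$ and take
\[
(\underbrace{A,\dots,A}_{y(n)},\ \underbrace{A\cup B,\dots,A\cup B}_{(i-1)y(n)},\ \underbrace{B,\dots,B}_{y(n)}),
\]
on the $2(n+i)$-element ground set $A\cup B$. Convexity is trivial because only three distinct sets appear and $A\cap B=\emptyset$; no element lies in every layer, so axiom~4 is vacuous at the top level; and the length is exactly $(i+1)y(n)$. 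There is no need for tags, buffers, or copies of a maximal $\Gamma$ --- the whole construction is carried by repetition of a few fixed sets, which is precisely what your plan avoids and what makes the statement easy.
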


\begin{proof}
For $i=1$,  observe that the sequence with $y(n)$ copies of a set $A$ of size $n+1$ is valid on $n+1$ elements. Taking two disjoint sets $A$ and $B$ of the same size, the sequence
\[
(A,\dots,A,B,\dots,B),
\]
where each block has length $y(n)$, shows $y(2n+2)\ge 2 y(n)$.

The same idea shows (by induction on $i$) the general statement: sets  if $A$ and $B$ are disjoint with $n+i$ elements, then the sequence
\[
(A,\dots,A,A\cup B,\dots,A\cup B,B,\dots,B),
\]
where the blocks of $A$'s and $B$'s have length $y(n)$ and the block of $A\cup B$'s has length $(i-1)y(n)$ is legal.
\qed
\end{proof}

\begin{corollary}[polymath 3~\cite{Kalai:polymath3}]
\begin{enumerate}
\item $y(4n)\ge n y(n)$ for all $n$.
\item $y(4^k) \ge 4^{k\choose 2}$.
\end{enumerate}
\end{corollary}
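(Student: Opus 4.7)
The proposal is to derive both parts directly from the preceding lemma, with essentially no extra combinatorics required; the content has already been packaged into the bound $y(2(n+i))\ge (i+1)y(n)$.

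For part (1), the plan is to choose $i=n$ in the lemma, so that $2(n+i)=4n$. This specialization yields
\[
y(4n) \;\ge\; (n+1)\,y(n) \;\ge\; n\,y(n),
\]
which is exactly the claimed inequality (in fact slightly stronger by a factor $(n+1)/n$, but it is this weaker form that iterates cleanly in part (2)).

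For part (2), the idea is to iterate part (1) starting from the base case $n=1$. Setting $a_k := y(4^k)$, part (1) applied with $n=4^{k-1}$ reads
\[
a_k \;=\; y(4\cdot 4^{k-1}) \;\ge\; 4^{k-1}\, y(4^{k-1}) \;=\; 4^{k-1}\, a_{k-1}.
\]
Unrolling the recursion gives
\[
a_k \;\ge\; 4^{k-1}\cdot 4^{k-2}\cdots 4^{0}\cdot a_0 \;=\; 4^{(k-1)+(k-2)+\cdots+0}\, y(1) \;=\; 4^{\binom{k}{2}}\, y(1).
\]
The only remaining point is to verify the trivial base estimate $y(1)\ge 1$, which holds because the one-term sequence $(\emptyset)$ (or $(\{1\})$) is visibly legal on one element. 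This yields $y(4^k)\ge 4^{\binom{k}{2}}$, completing part (2).

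There is no real obstacle here: the whole content of the corollary is the repeated application of the doubling-type inequality of the previous lemma, and the only bookkeeping is to recognize that the substitution $i=n$ converts that inequality into a self-similar recursion on the sequence $y(4^k)$, whose telescoping product of factors $4^{k-1}4^{k-2}\cdots 4^{0}$ gives the exponent $\binom{k}{2}$.
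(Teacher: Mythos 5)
Your proposal is correct and follows essentially the same route as the paper: part (1) is the specialization $i=n$ of the lemma $y(2(n+i))\ge (i+1)y(n)$, and part (2) is obtained from part (1) by induction on $k$ (your unrolled recursion), with the harmless base observation $y(1)\ge 1$.
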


\begin{proof}
For part (1), let $i=n$ in the lemma. For part (2), use part (1) and induction on $k$.
\qed
\end{proof}

Observe that this is not far from the upper bound of Theorem~\ref{thm:kk-y}, which specializes to:
\[
y(4^n)\le 4^{k^2}.
\]

\subsubsection*{Reformulations and other abstractions}
The connected layer complexes and multicomplexes that we have been considering in this section are certainly not the first attempt at proving the Hirsch conjecture (or a polynomial version of it) by generalizing and abstracting the properties of graphs of polytopes. Other classical attempts are for example in~\cite{AdlDan:abstract,Kalai:upper-bounds}. (The second one can be considered a prequel to the Kalai-Kleitman quasi-polynomial upper bound). In fact, these earlier attempts were an inspiration for the work of Eisenbrand et al.~in~\cite{EHRR:limits-of-abstraction}.

Taking this into account, Kim~\cite{Kim:abstractions}  started  a study of 
variations of the axioms defining connected layer complexes. His main generalization is that instead of the layers forming a sequence, he allows for them to be attached to the vertices of an arbitrary graph $G$, whose diameter we want to bound.
The \emph{connectedness} condition posed for \clm's or \clc's (condition (1) of Definition~\ref{defi:clm}) becomes:
\begin{equation}
\text{$\forall S\subset[n]$, the star of $S$ intersects a connected subgraph of layers.}
\label{eq:clm-g}
\end{equation}

Apart of this, Kim considers also the following properties, that can be posed for these objects:
\begin{itemize}
\item Adjacency: if two facets of $M$ differ by a single element then they must lie in the same or adjacent layers. (That is, the map from $\graph(M)$ to $G$ associating each facet to its layer is simplicial).
\item Strong adjacency: adjacency holds and every two adjacent layers contain facets differing by an element.
\item Pseudo-manifold, or end-point count: no codimension one face of $M$ is contained in more than two facets of $M$.
\end{itemize}

Among his results are a generalization of the Kalai-Kleitman bound for all layer complexes satisfying connectivity, and some examples showing that without connectivity exponential diameters (of the graph $G$) can occur. For example, diameter $n^{d/4}$ can be obtained for layer complexes that have the end-point count and the strong-adjacency properties. Similar examples were later obtained by H\"ahnle~\cite{Haehnle:abstractions}.

\section{Recent results on the diameter of polytopes and polyhedra}

\subsection{Exact bounds for small $d$ and $n$}
\label{sec:smalln}
Remember that we denote by $H_p(n,d)$ the maximum diameter among all $d$-polyhedra with $n$ facets. The version restricted to bounded polytopes will be denoted $H_b(n,d)$. It is easy to show that $H_p(n,d)\ge H_b(n,d)$ for all $n>d$ and that $H_p(n,d)\ge n-d$ for all $n$ and $d$. The latter is not true for $H_b(n,d)$ (e.g., it is clear that $H_b(n,2)=\lfloor n/2 \rfloor$). 

In this section we report on what exact values of $H_b(n,d)$ are known. 
The recent part are the papers~\cite{BDHS:more-bounds,BreSch:diameter-few-facets}, but let us start with a bit of history. 
The Hirsch Conjecture was soon proved to hold for polytopes and polyhedra of dimension $3$ (we here state only the version for polytopes). A proof can be found in~\cite{KimSan:update}:

\begin{theorem} [Klee~\cite{Klee:paths}]
\label{thm:hirschford3}
$H_b(n,3)= \lfloor \frac{2n}{3} \rfloor - 1$.
\end{theorem}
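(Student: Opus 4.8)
The plan is to prove the two inequalities separately, treating the lower bound by an explicit construction and the upper bound by a careful analysis of paths in simple $3$-polytopes. By the reduction discussed in Section~\ref{sec:polyhedra} we may assume the polytope $P$ is simple, so its graph $\graph(P)$ is $3$-regular, and the dual complex is a simplicial $2$-sphere with $n$ vertices; equivalently $P$ has $n$ facets, and by Euler's formula it then has $2n-4$ vertices and $3n-6$ edges. For the \emph{lower bound} I would exhibit, for each residue of $n$ mod $3$, a simple $3$-polytope with $n$ facets and two vertices at distance $\lfloor 2n/3\rfloor - 1$: the standard example is an iterated construction (e.g.\ gluing prisms, or a ``stacked'' family of truncated cells) where each block of roughly $3$ new facets forces $2$ extra steps in any path between the two distinguished vertices. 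One checks directly that the shortest path has the claimed length, and that the small cases $n=4,5,6$ serve as base cases.

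For the \emph{upper bound} the idea is to walk along a shortest path $v_0, v_1, \dots, v_\delta$ between two vertices of $P$ and bound $\delta$ by counting facets. The classical argument of Klee exploits the fact that in a simple $3$-polytope the facets are polygons tiling the $2$-sphere: fix a shortest path $\Gamma$; at each vertex $v_i$ the path enters and leaves through two of the three edges, and the third edge together with the two facets flanking $\Gamma$ at that point lets one assign ``new'' facets to steps along $\Gamma$. More precisely, one shows that on average at least $3$ facets of $P$ are ``used up'' (first touched by $\Gamma$) every $2$ steps, except possibly near the two endpoints, which yields $2\delta \le 3n + O(1)$ and, after tightening the constants by a parity/boundary analysis, exactly $\delta \le \lfloor 2n/3\rfloor - 1$. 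An alternative and perhaps cleaner route is an induction on $n$: given a shortest path, find a facet $F$ meeting $\Gamma$ in a controlled way, delete it (or contract it) to get a simple $3$-polytope with fewer facets, and track how much the distance can drop — one wants each removed facet to cost at most $2/3$ of a step on average.

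The main obstacle is the upper bound, and specifically getting the constant exactly right rather than merely $O(n)$: the naive facet-counting gives a bound like $\delta \le n-3$ or a weak linear bound, and squeezing it down to $\tfrac{2}{3}n$ requires the planar/spherical structure in an essential way — one must argue that the facets flanking a shortest path cannot be ``reused'' too often, using that a shortest path in a planar graph cannot wind around a facet and that three consecutive facets along $\Gamma$ cannot all coincide. Handling the three congruence classes of $n \bmod 3$ and the behaviour at the two endpoints of $\Gamma$ is where the floor function and the $-1$ enter, so the endpoint bookkeeping is the delicate part of an otherwise elementary argument. I would expect to lean on the explicit $3$-dimensional structure (Steinitz-type reasoning: graphs of $3$-polytopes are exactly the $3$-connected planar graphs) rather than on any of the abstract complex machinery of Section~\ref{sec:clm}, since the sharp constant seems to genuinely need planarity.
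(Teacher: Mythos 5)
The paper itself does not prove this theorem: it states it and defers the proof to Klee~\cite{Klee:paths} and to the companion survey~\cite{KimSan:companion} (see the sentence immediately after the theorem and the remark after Proposition~\ref{prop:lowerhirsch}). So there is no ``paper's approach'' to compare yours against.

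On its own merits, your sketch has the right shape but a genuine gap at exactly the point you yourself flag. The reduction to simple $3$-polytopes and the Euler counts ($V=2n-4$, $E=3n-6$) are fine, and the strategy of lower bound by construction plus upper bound by facet-counting along a shortest path is the right template. But the upper bound argument is not actually supplied. You write that ``on average at least $3$ facets of $P$ are used up every $2$ steps,'' yet the only counting you describe — tracking which facets pass through the successive path vertices — yields exactly one new facet per step: in a simple $3$-polytope, moving from $v_i$ to $v_{i+1}$ along an edge $e$ keeps the two facets containing $e$ and swaps exactly one facet, so a non-revisiting shortest path of length $\delta$ touches only $3+\delta$ facets, giving $\delta\le n-3$ (the $d=3$ Hirsch bound), not $\lfloor 2n/3\rfloor-1$. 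To get the $3/2$ rate you would need to charge, to each pair of steps, some third facet that the path never actually enters (e.g.\ a facet cut off on one side of $\Gamma$ in the planar embedding), and argue it can never be charged again; this is the crux of Klee's argument and is exactly the step you say you do not know how to carry out. Until that charging scheme is made precise, the upper bound is unproven. The lower bound is also only gestured at (``gluing prisms, or a stacked family''); for the record, the construction underlying Proposition~\ref{prop:lowerhirsch} specialized to $d=3$ gives a concrete drum-like simple $3$-polytope realizing $\lfloor 2n/3\rfloor-1$, so that half is salvageable with a bit more care, but the upper bound is where the real content lies and where your sketch stops short.
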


In fact, the lower bound in this statement is easy to generalize.
Observe that the formula below gives the exact value of $H_b(n,d)$ for $d=2$ as well. Proofs of Theorem~\ref{thm:hirschford3} and Proposition~\ref{prop:lowerhirsch} can be found in~\cite{KimSan:companion}.

\begin{proposition}
\label{prop:lowerhirsch}
\[
H_b(n,d) \ge \left \lfloor\frac{d-1}{d} n \right\rfloor - (d-2).
\]
\end{proposition}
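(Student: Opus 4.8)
The plan is to construct, for every $d$ and every $n > d$, a bounded $d$-polytope with $n$ facets whose graph has diameter at least $\lfloor \frac{d-1}{d} n \rfloor - (d-2)$. The natural building block is the $2$-dimensional case, where equality $H_b(n,2) = \lfloor n/2 \rfloor$ is realized by a convex polygon with $n$ edges: the two vertices ``antipodal'' along the boundary are at distance $\lfloor n/2 \rfloor$. The strategy is then to promote this example to higher dimension by taking iterated products (or, in the language of the dual complexes, joins as in Lemma~\ref{lemma:join}), and to account carefully for how facets and diameters behave under products.

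First I would recall the two elementary facts about products of polytopes: if $P_1, P_2$ are polytopes then $P_1 \times P_2$ has dimension $\dim P_1 + \dim P_2$, its facet count is the \emph{sum} of the facet counts of the factors (each facet of $P_1 \times P_2$ is $F \times P_2$ or $P_1 \times F'$), and its graph is the Cartesian product $G(P_1) \,\square\, G(P_2)$, so that $\diam(P_1 \times P_2) = \diam(P_1) + \diam(P_2)$. Consequently, if I take a product of $d-2$ polygons with $n_1,\dots,n_{d-2}$ edges respectively and one segment (a $1$-polytope with $2$ facets) — or, more efficiently, distribute the ``budget'' $n$ of facets among the factors — I obtain a $d$-polytope. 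To hit dimension $d$ with the largest possible diameter, the cleanest choice is to use $d-1$ segments and a single polygon, or to balance: a product of one polygon with $m$ edges and $d-2$ segments is a $d$-polytope with $m + 2(d-2)$ facets and diameter $\lfloor m/2 \rfloor + (d-2)$. Setting $m = n - 2(d-2)$ gives diameter $\lfloor (n-2(d-2))/2 \rfloor + (d-2) = \lfloor n/2 \rfloor$, which is only the $d=2$ bound and far from what we want; so the honest construction must instead use \emph{many} polygonal factors rather than segments.

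The better construction: take $P$ to be a product of $d-1$ triangles-ish factors. Concretely, write $n = (d-1)q + r$ with $0 \le r \le d-2$ (so $q = \lfloor \frac{n}{d-1}\rfloor$ roughly, but I will want $q = \lfloor \frac{d-1}{d}\cdot\frac{n}{d-1}\rfloor$-type bookkeeping — here is where the arithmetic has to be done carefully), and let $P$ be a product of polygons $Q_1 \times \cdots \times Q_{?}$ whose edge-counts sum to $n$ and whose dimensions sum to $d$; since each polygon contributes dimension $2$, I actually use $\lfloor d/2 \rfloor$ polygons and possibly one extra segment if $d$ is odd. Each polygon with $n_i$ edges contributes $\lfloor n_i/2\rfloor$ to the diameter, and the total diameter is $\sum \lfloor n_i / 2 \rfloor \ge \frac{1}{2}(\sum n_i) - \frac{1}{2}(\#\text{factors}) = \frac{n}{2} - \frac{\lfloor d/2\rfloor}{2}$. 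This gives roughly $\frac{n}{2}$, which is still weaker than $\frac{d-1}{d} n$ for $d \ge 3$. So a pure-product construction does \emph{not} suffice, and the actual proof must be cleverer — most plausibly a ``wedge'' or ``truncation'' construction in the spirit of Klee's proof of Theorem~\ref{thm:hirschford3}, iteratively wedging over facets to add one facet while keeping (almost) all of the diameter, which is exactly the operation that produces the $\frac{d-1}{d}$ ratio.

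Thus the main step, and the main obstacle, will be to exhibit the right inductive construction: start from the $(d-1)$-dimensional extremal example with $n'$ facets and diameter $\lfloor\frac{d-2}{d-1}n'\rfloor - (d-3)$, and apply a sequence of operations — truncating a vertex, or wedging on a facet — each of which increases $n$ by $1$ and increases the diameter by a controlled amount, arranging that on average $d-1$ out of every $d$ added facets buy a $+1$ in diameter. I would verify the base cases $d=2$ (polygon, diameter $\lfloor n/2\rfloor$) and $d=3$ (the lower bound half of Theorem~\ref{thm:hirschford3}) directly, then check that the wedge operation applied to a vertex at maximal distance from a chosen base vertex behaves as claimed, and finally reconcile the floor functions to land exactly on $\lfloor\frac{d-1}{d}n\rfloor - (d-2)$. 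The delicate point throughout is the floor-function bookkeeping: one must choose \emph{which} facet to wedge over (or vertex to truncate) at each stage so that the loss in the floor never accumulates, and the cited references~\cite{KimSan:companion} presumably carry out precisely this combinatorial accounting.
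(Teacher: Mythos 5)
Your proposal does not actually contain a proof. You correctly verify that the naive product constructions fail: a product of a polygon with $d-2$ segments gives diameter exactly $\lfloor n/2 \rfloor$, and distributing the budget among $\lfloor d/2 \rfloor$ polygons gives no better, both of which for $n>2d$ fall strictly short of $\lfloor \frac{d-1}{d} n \rfloor-(d-2)$. (For $n\le 2d$ a product of simplices does achieve $n-d$, which matches the formula, but you don't note this and in any case the whole difficulty lies in $n>2d$.) Having ruled products out, you then say the ``actual proof must be cleverer'' and defer to~\cite{KimSan:companion}. That deferral \emph{is} the gap: the proposition asserts the existence of a simple $d$-polytope with $n$ facets and the stated diameter, and no such polytope is exhibited. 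Your closing paragraph only restates what would have to be true of an inductive construction (``arranging that on average $d-1$ out of every $d$ added facets buy a $+1$ in diameter'') without producing it or verifying that the increments can be realized.

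Two more specific points. First, you conflate operations with different effects: a wedge over a facet raises both $n$ and $d$ by one, so it cannot be iterated to increase $n$ at fixed $d$; the operation you need to iterate at fixed $d$ is vertex truncation, which adds one facet. The natural scheme is to start from the $d$-simplex ($n=d+1$, diameter $1$) and truncate repeatedly, and the combinatorial content to be proved is that one can choose the truncated vertex so that the diameter increases in $d-1$ out of every $d$ consecutive truncations; this requires an argument that some vertex at maximum distance from a fixed base vertex has a unique neighbor at distance one less, a property that must be propagated through the construction. None of this is carried out. Second, for what it is worth, the paper itself does not prove Proposition~\ref{prop:lowerhirsch} either; it cites~\cite{KimSan:companion}. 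So you are in the same position as the paper, but a ``proof proposal'' needs the construction, not a pointer to where it might be found.
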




On the other extreme, when $n-d$, rather than $d$, is small, there is also the following classical and important statement of Klee and Walkup. A proof can be found in~\cite[Theorem 3.2]{KimSan:update}.

\begin{theorem}[Klee-Walkup~\cite{KleWal:dstep}]
\label{thm:dstep}
Fix a positive integer $k$. Then
$\max_d H(d+k,d) = H(2d,d).$
\end{theorem}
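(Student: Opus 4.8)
The plan is to squeeze every value $H(d+k,d)$ between two copies of $H(2k,k)$ using two monotonicity lemmas, one that raises the dimension and one that lowers it. The first is the \emph{wedge lemma}: $H(n,d)\le H(n+1,d+1)$ whenever $n>d$. Given a $d$-polytope $P$ with $n$ facets (the argument works verbatim for unbounded polyhedra) and a facet $F$ of $P$, form the wedge $W=W_F(P)$, a $(d+1)$-polytope with $n+1$ facets, and let $\pi\colon W\to P$ be the projection forgetting the new coordinate. Two facets of $W$ are combinatorial copies of $P$, mapped isomorphically by $\pi$, and the remaining $n-1$ facets project onto the facets $G\ne F$ of $P$. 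The key observation is that every edge of $W$ is sent by $\pi$ either to an edge or to a single vertex of $P$: an edge lying on one of the two $P$-copies maps to an edge of $P$, and any other edge lies on at least $d$ facets of $W$, which project onto $d$ distinct facets of $P$, so its image is contained in a single vertex. Consequently every path in $\graph(W)$ projects to a walk of no greater length in $\graph(P)$, and if $u,v$ attain $\diam(P)$ and $\tilde u,\tilde v\in W$ lie over them, then $d_W(\tilde u,\tilde v)\ge d_P(u,v)=\diam(P)$; hence $\diam(W)\ge\diam(P)$.

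The second is the \emph{facet-reduction lemma}: $H(n,d)\le H(n-1,d-1)$ whenever $n<2d$. Let $P$ be a $d$-polytope with $n$ facets realizing $H(n,d)$, and let $u,v$ be vertices with $d_P(u,v)=\diam(P)$. Each of $u$ and $v$ lies on at least $d$ facets of $P$, and since $d+d>n$ they must lie on a common facet $F$. Now $F$ is a $(d-1)$-polytope with at most $n-1$ facets, and because $F$ is a face of $P$ its graph is the subgraph of $\graph(P)$ induced on the vertices of $F$; in particular $d_F(u,v)\ge d_P(u,v)$. Recalling that $H(\cdot,d-1)$ is nondecreasing in its first argument (as one sees, e.g., by truncating a vertex), we obtain $H(n-1,d-1)\ge\diam(F)\ge d_F(u,v)\ge\diam(P)=H(n,d)$.

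Finally I would assemble the theorem. Fix a positive integer $k$. If $d\le k$, iterating the wedge lemma gives $H(d+k,d)\le H(d+1+k,d+1)\le\cdots\le H(2k,k)$. If $d\ge k$, then for every dimension $j$ with $k<j\le d$ we have $j+k<2j$, so the facet-reduction lemma applies at each such $j$, and iterating it gives $H(d+k,d)\le H(d-1+k,d-1)\le\cdots\le H(2k,k)$. Thus $H(d+k,d)\le H(2k,k)$ for every $d$, with equality at $d=k$, where $2k=2d$; this is exactly the assertion $\max_d H(d+k,d)=H(2d,d)$. I expect the only genuine work to be the careful verification of the wedge lemma---that $W_F(P)$ really has the stated face structure and that edges project as claimed---since the facet-reduction step and the final bookkeeping are routine.
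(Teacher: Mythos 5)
Your proposal is correct and follows essentially the same route the paper points to: the paper does not spell out a proof but states the wedge construction as the ``$d$-step Lemma'' and cites the survey \cite{KimSan:update}, whose proof is exactly your combination of a wedge step (for $n>2d$) with the common-facet reduction (for $n<2d$). The only places that warrant a word of care --- and which you handle correctly --- are that the wedge/edge-projection argument should be run on a simple polytope (harmless, since $H$ is attained at simple polytopes) and the brief monotonicity $H(m,d)\le H(m+1,d)$ used to pass from ``at most $n-1$ facets'' to $H(n-1,d-1)$.
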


Here we write $H(n,d)$ without a subscript because the statement is valid for bounded polytopes, for unbounded polyhedra, and for much more general objects (e.g., normal $(d-1)$-spheres or balls). 
In the same paper, Klee and Walkup showed that $H_b(10,5)=5$. With this they concluded the Hirsch Conjecture for polytopes with $n-d\le 5$.
Goodey (1972) computed $H_b(10,4)=5$  and $H_b(11,5)=6$ but was not able to certify that $H_b(12,6)=6$. This was done only recently by Bremner and Schewe~\cite{BreSch:diameter-few-facets}:

\begin{corollary}[Bremner-Schewe~\cite{BreSch:diameter-few-facets}]
The Hirsch bound holds for polytopes with at most six facets more than their dimension.
\end{corollary}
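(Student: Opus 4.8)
The plan is to reduce the claim, via the Klee--Walkup $d$-step theorem, to a short finite list of cases and then settle each one, the last by a large computer search. Write a polytope's facet count as $n=d+k$, so that $k=n-d\le 6$ and the bound to be proved is $H_b(d+k,d)\le k$. Since Theorem~\ref{thm:dstep} is valid for bounded polytopes, $H_b(d+k,d)\le \max_d H_b(d+k,d)=H_b(2k,k)$, and therefore it suffices to check
\[
H_b(2k,k)\le k \qquad\text{for } k=1,2,3,4,5,6 .
\]
(By Proposition~\ref{prop:lowerhirsch} one then in fact gets equality $H_b(2k,k)=k$, but only the upper bound is needed here.)

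The cases $k\le 5$ are classical. For $k=1$ (a segment) and $k=2$ ($H_b(4,2)=2$, a quadrilateral) the bound is immediate; for $k=3$ it is the value $H_b(6,3)=\lfloor 2\cdot 6/3\rfloor-1=3$ of Theorem~\ref{thm:hirschford3}; and for $k=4$ ($H_b(8,4)\le 4$) and $k=5$ ($H_b(10,5)=5$) it is part of Klee and Walkup's verification of the Hirsch conjecture for all polytopes with $n-d\le 5$. Thus the whole corollary reduces to the single new inequality $H_b(12,6)\le 6$, i.e.\ the $6$-step conjecture.

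To prove $H_b(12,6)\le 6$ I would first run the combinatorial half of the $d$-step theorem in reverse: a hypothetical violation may be taken to live on a \emph{Dantzig figure}, a simple $6$-polytope with $12$ facets carrying two \emph{complementary} vertices $u,v$ (their defining $6$-element facet-sets being disjoint) at distance at least $7$, and the assertion is equivalent to the existence, in every such figure, of a \emph{non-revisiting} path from $u$ to $v$ --- necessarily of length exactly $6$. Dualizing, this is a statement about simplicial $5$-spheres (or balls) on $12$ vertices with two disjoint facets. One then enumerates the combinatorially admissible candidates --- working with the abstract facet--ridge incidence data (or oriented-matroid data), since realizability need not be checked for an \emph{upper} bound: if every combinatorial candidate has a short $u$--$v$ path, so does every genuine polytope --- and for each candidate tests by breadth-first search in the dual graph whether $\operatorname{dist}(u,v)\le 6$.

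The main obstacle is sheer size: for $d=6$ this enumeration is astronomically large, so the search becomes feasible only after (i) heavy use of the symmetry group permuting the two complementary facet-blocks and their elements to discard isomorphic cases, (ii) encoding the statement ``there is an admissible incidence structure all of whose $u$--$v$ paths have length $\ge 7$'' as the unsatisfiability of a propositional (or the infeasibility of an integer-programming) instance handed to a SAT/MILP solver, and (iii) arranging the case split so that most branches die early to local non-revisiting obstructions. This is exactly the computation performed by Bremner and Schewe~\cite{BreSch:diameter-few-facets}, building on~\cite{BDHS:more-bounds}; beyond the $d$-step reduction, the real work is designing an encoding and a symmetry reduction that bring $H_b(12,6)=6$ within the reach of current solvers.
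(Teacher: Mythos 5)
Your reduction via the $d$-step theorem is exactly right and matches the paper: since $\max_d H_b(d+k,d)=H_b(2k,k)$, the corollary for $k=n-d\le 6$ reduces to $H_b(2k,k)\le k$ for $k\le 6$, the cases $k\le 5$ are classical (Klee, Klee--Walkup, Goodey), and the one genuinely new computation needed is $H_b(12,6)\le 6$. Up to here everything is correct.

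Where you go astray is in the description of the computation, and the error is not merely cosmetic. You assert that ``realizability need not be checked for an upper bound,'' and you frame the search as an enumeration of abstract simplicial $5$-spheres or balls (or ``admissible incidence structures''), testing whether each has dual diameter $\le 6$. That logic is one-directional and fragile: it proves the bound \emph{only if} every combinatorial candidate in your enumeration happens to satisfy it. If even one non-polytopal sphere in the class has all $u$--$v$ paths of length $\ge 7$, your SAT instance is satisfiable and you learn nothing about polytopes. There is no a priori reason to expect this not to happen — simplicial spheres are a strictly larger class than boundary complexes of polytopes, and non-polytopal spheres with bad diameter behavior are exactly the kind of pathology one must worry about. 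The paper makes clear that Bremner and Schewe do essentially the opposite of what you describe: rather than enumerating whole spheres, they enumerate \emph{path complexes} — small abstract complexes encoding a candidate long dual path, subject to combinatorial axioms necessary for lying on a simplicial polytope boundary — and then, for each one, ask whether it can be \emph{completed} to a polytope boundary without creating a shortcut. That completion question is what gets encoded in SAT, and the encoding is precisely a set of chirotope (oriented-matroid) sign constraints, i.e.\ a realizability layer. So the oriented-matroid conditions are not an optional efficiency device that you can wave away; they are the mechanism that makes the conclusion apply to polytopes at all. Your plan would need either a proof that the purely combinatorial class already has diameter $\le 6$ (which is an independent and possibly false claim), or the realizability constraints reinstated — at which point you recover the Bremner--Schewe method.
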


The work from~\cite{BreSch:diameter-few-facets} was later extended by the same authors together with Deza and Hua, which computed $H_b(12,4)=H_b(12,5)=7$. 
All in all,
the following statement exhausts all pairs $(n,d)$ for which the maximum diameter $H_b(n,d)$ of $d$-polytopes with $n$ facets is known. We omit the cases $n < 2d$, because  $H_b(d+k,d)=H_b(2k, k)$ for all $k<d$ (see Theorem~\ref{thm:dstep}), and the trivial case $d\le 2$. 

\begin{theorem} 
\label{thm:hirschforsmalln}
\begin{itemize}
\item $H_b(8,4)=4$ (Klee~\cite{Klee:paths}).
\item $H_b(9,4)=H_b(10,5)=5$ (Klee-Walkup~\cite{KleWal:dstep}).
\item  $H_b(10,4)=5$, $H_b(11,5)=6$ (Goodey~\cite{Goodey}).
\item $H_b(11,4)=H_b(12,6)=6$ (Bremner-Schewe~\cite{BreSch:diameter-few-facets}).
\item $H_b(12,4)=H_b(12,5)=7$ (Bremner-Deza-Hua-Schewe~\cite{BDHS:more-bounds}).
\end{itemize}
\end{theorem}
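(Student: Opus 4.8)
\emph{Nature of the statement.} This is a compilation of the finitely many pairs $(n,d)$ with $n\ge 2d$ and $d\ge 3$ for which $H_b(n,d)$ has been pinned down exactly, so the plan is to treat the lower and upper bounds separately and, for each, to split the list into ``cheap'' cases that follow from general principles and ``expensive'' cases needing a dedicated argument. Throughout I would use two standing reductions. First, the maximum is attained by a \emph{simple} $d$-polytope~\cite{Klee:diameters}, whose graph is the dual graph of a simplicial $(d-1)$-sphere on $n$ vertices (Proposition~\ref{prop:dual-graph}), so the whole question is combinatorial. Second, the $d$-step theorem (Theorem~\ref{thm:dstep}), in the form $H_b(d+k,d)=H_b(2k,k)$ for $k\le d$, together with the monotonicities $H_b(n,d)\le H_b(n+1,d)$ and $H_b(n,d)\le H_b(n+1,d+1)$ coming from adding a facet and from wedging: these make the three ``diagonal'' entries $H_b(8,4)$, $H_b(10,5)$, $H_b(12,6)$ do double duty, since each also caps a whole family $H_b(d+k,d)$ with $d<k$, whereas the off-diagonal entries with $d=4,5$ have $n-d\ge 7$ and cannot be reduced this way (the reduction would bottom out at $H_b(14,7)$, which is not known), and so must be attacked directly.

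\emph{Lower bounds.} For six of the nine entries I would simply evaluate Proposition~\ref{prop:lowerhirsch}: the quantity $\lfloor\frac{d-1}{d}n\rfloor-(d-2)$ equals $4,5,5,6,7,6$ at $(8,4),(10,5),(10,4),(11,4),(12,4),(12,6)$, matching the claimed values. The remaining three, $(9,4)$, $(11,5)$, $(12,5)$, are Hirsch-sharp ($H_b=n-d$), and there Proposition~\ref{prop:lowerhirsch} falls short by exactly one; for these I would quote the explicit Hirsch-tight polytopes already in the literature --- the Klee--Walkup $4$-polytope with $9$ facets~\cite{KleWal:dstep}, Goodey's $5$-polytope with $11$ facets~\cite{Goodey}, and a $5$-polytope with $12$ facets and diameter $7$ from~\cite{BDHS:more-bounds} --- and read the diameters off their facet lists.

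\emph{Upper bounds, the hard part.} I would proceed case by case, in order of difficulty. The classical diagonal bounds $H_b(8,4)\le 4$~\cite{Klee:paths} and $H_b(10,5)\le 5$~\cite{KleWal:dstep} can be done by hand: after reducing to $n=2d$ one shows that a shortest non-revisiting walk in the dual graph of a simplicial $(d-1)$-sphere with $2d$ vertices uses at most $d$ steps, through a finite case analysis of which ridges are shared by consecutive facets (a ``Dantzig figure'' argument). For the six remaining entries --- $H_b(10,4)$, $H_b(11,5)$~\cite{Goodey}, $H_b(11,4)$, $H_b(12,6)$~\cite{BreSch:diameter-few-facets}, $H_b(12,4)$, $H_b(12,5)$~\cite{BDHS:more-bounds} --- there is no shortcut, and the plan is a computer-assisted exhaustive search: enumerate, up to the $S_n$-symmetry, the combinatorial types of simplicial $(d-1)$-pseudomanifolds on $n$ labelled vertices (one may even relax to all simplicial complexes with the appropriate $f$-vector obeying the Dehn--Sommerville relations), compute the dual-graph diameter of each, and output the maximum $D$. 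Since every simple $d$-polytope with $n$ facets is dual to one of the enumerated complexes, this certifies $H_b(n,d)\le D$, and combined with the lower-bound polytopes above, which realize exactly $D$, one gets equality. To make the search terminate I would (i) use the $d$-step / non-revisiting structure to constrain the vertex--facet incidences and to prune branches that cannot beat the running record, (ii) quotient aggressively by symmetry, and (iii) phrase ``does a complex with two facets at distance $\ge D+1$ exist?'' as a \textsc{sat} or integer-feasibility instance, so that what has to be produced and independently checked is an \emph{infeasibility} certificate rather than a raw enumeration.

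\emph{Main obstacle.} The whole weight of the theorem lies in those upper-bound searches, and the genuine difficulty --- the content of~\cite{BreSch:diameter-few-facets,BDHS:more-bounds} --- is combinatorial explosion: the number of simplicial spheres on $n$ vertices grows so fast that naive enumeration is hopeless already around $n=12$, so everything depends on the quality of the $d$-step pruning and of the propositional encoding. A secondary subtlety is that the search runs over pseudomanifolds (or spheres), not over realizable polytopes, so without a realizability oracle it only delivers an \emph{upper} bound for $H_b(n,d)$; matching it therefore genuinely needs the explicit polytopal constructions of the lower-bound step. Everything else --- the lower-bound evaluations, the $d$-step and wedge reductions, the two diagonal hand arguments --- is routine.
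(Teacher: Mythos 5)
Your lower-bound plan is correct and matches what is actually done: Proposition~\ref{prop:lowerhirsch} evaluates to $4,5,5,6,7,6$ at $(8,4),(10,5),(10,4),(11,4),(12,4),(12,6)$, and the remaining three Hirsch-sharp cases $(9,4),(11,5),(12,5)$ are witnessed by the explicit constructions in the cited papers. (A small arithmetic slip: the off-diagonal entries have $n-d$ ranging from $5$ to $8$, not uniformly $\ge 7$; in particular $H_b(9,4)$ and $H_b(11,5)$ are in retrospect dominated by the diagonal entries $H_b(10,5)$ and $H_b(12,6)$ via the $d$-step and wedge monotonicities, even though the latter equality was of course not available to Goodey in 1972.)

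The upper-bound plan has a genuine gap, and it lies exactly in what you dismiss as a ``secondary subtlety''. You propose to enumerate all simplicial $(d-1)$-pseudomanifolds (or even all complexes with a Dehn-Sommerville $f$-vector) on $n$ vertices, compute their dual diameters, and report the maximum $D$. This fails on both feasibility and soundness. Computationally, the paper states that brute-force enumeration of combinatorial types of \emph{polytopes} with $12$ facets is ``beyond today's possibilities'', and the class you propose to enumerate is much larger still. Logically, even a completed enumeration would only deliver $H_b(n,d)\le D$, where $D$ is a maximum over objects most of which are not polytope boundaries; nothing in your argument forces $D$ to equal the polytopal maximum, and if some non-polytopal sphere realizes a diameter strictly larger than every polytope with those parameters, your lower-bound constructions cannot close the resulting strict inequality. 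You acknowledge the issue but do not resolve it, and the plan as written does not prove the theorem.

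What~\cite{BreSch:diameter-few-facets} and~\cite{BDHS:more-bounds} actually do is, in a sense, the dual search, and it resolves both problems at once. Rather than enumerating whole complexes, they enumerate \emph{path complexes}: candidate dual corridors of length exceeding the bound to be established, subject to axiomatic conditions necessary for membership in a simplicial polytope boundary. For each such corridor they then ask whether it can be completed to the boundary of an actual polytope without creating a shortcut; this realizability question is phrased in oriented-matroid (chirotope) terms, with the required sign conditions fed to a \textsc{sat} solver following Schewe's encoding~\cite{Schewe:satisfiability}. The realizability oracle is not optional overhead to be deferred to the lower-bound step; it is what turns a combinatorial enumeration into a statement about polytopes, and it is also what keeps the search finite in practice, since only putative violating paths, rather than entire boundary complexes, need to be examined.
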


The results in~\cite{BDHS:more-bounds,BreSch:diameter-few-facets} involve heavy use of computer power. But it would be unfair to say that they are obtained by ``brute force''. In fact, brute force enumeration of the combinatorial types of polytopes with $12$ facets is beyond today's possibilities. 

Instead of that the authors look, for each pair $(n,d)$ under study, at the possible \emph{path complexes}, where a path complex is a simplicial complex of dimension $d-1$ with $n$ vertices and with certain axiomatic restrictions that are necessary for it to be a subcomplex of a simplicial polytope. After enumerating path complexes, the authors address the question of which of them can be completed to be part of the boundary of an actual polytope without the completion producing ``shortcuts". This is modeled in oriented matroid terms: for the path complex to be part of a polytope boundary, certain signs are needed in the chirotope of the polytope. 
The question of whether or not a chirotope with those sign constraints exists is solved with a standard SAT solver, using some ad-hoc decompositions in the instances that are too large for the solver to decide. 
The reduction to satisfiability follows ideas of Schewe~\cite{Schewe:satisfiability} developed originally for a different realizability question.

\subsection{Polytope counter-examples to the Hirsch Conjecture}
\label{sec:Hirsch-counter}

The Hirsch Conjecture for unbounded polyhedra was disproved in 1967 by Klee and Walkup, who showed that:

\begin{theorem}[Klee-Walkup~\cite{KleWal:dstep}]
There is a $4$-polyhedron with 5 facets and diameter five.
\end{theorem}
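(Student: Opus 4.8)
The statement is a construction, so the plan is to exhibit a concrete polyhedron and verify its combinatorics. The object to produce is the classical \emph{Klee--Walkup polyhedron} $Q_4$: a simple, unbounded $4$-dimensional polyhedron with exactly $2d=8$ facets, presented as an explicit system of $8$ linear inequalities in $\R^4$ (equivalently, one may work with its dual, a simplicial $3$-ball on $8$ vertices). The first, routine, step is to check the basic features: that none of the $8$ inequalities is redundant, so that $Q_4$ has $8$ facets and dimension $4$; that it is unbounded; and that it is \emph{simple}, i.e.\ every vertex lies on exactly $4$ facets. Labelling the facets $1,\dots,8$, I then encode each vertex by the $4$-subset of $[8]$ of facets passing through it, enumerate the vertex set, and record the adjacency (dual) graph, in which two vertices are joined by an edge precisely when their $4$-sets differ in a single element.

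Next I isolate two vertices $u$ and $v$ with $u$ on the facets $\{1,2,3,4\}$ and $v$ on the facets $\{5,6,7,8\}$, so that $u$ and $v$ lie on \emph{no common facet}. The easy half is the upper bound $d(u,v)\le 5$, obtained by displaying a single explicit edge-path of length $5$ from $u$ to $v$ in the graph above. Since $\diam(Q_4)\ge d(u,v)$ while the Hirsch bound for $Q_4$ is $8-4=4$, the theorem reduces to the lower bound $d(u,v)\ge 5$.

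That lower bound is the crux. The relevant observation is that, in a simple polyhedron, each edge-step swaps exactly one facet of the current vertex for a new one; hence an edge-path from $u$ to $v$ must transform $\{1,2,3,4\}$ into $\{5,6,7,8\}$ by single-element swaps, and the symmetric difference $\{1,2,3,4\}\,\triangle\,\{5,6,7,8\}$ has size $8$. Therefore such a path has length at least $4$, with length exactly $4$ only if it is \emph{non-revisiting}: it never re-enters a facet it has already left (equivalently, since $n=2d=8$, it is a ``$d$-step path''). Consequently it suffices to prove that $Q_4$ contains \emph{no} non-revisiting path from $u$ to $v$ --- which is exactly the property for which the polyhedron is engineered. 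I would establish this either by (i) a finite case analysis running over all candidate $4$-step walks from $u$ to $v$, using the explicit adjacency list, and checking that every admissible sequence of swaps is forced to leave the graph or to repeat a facet; or by (ii) a structural argument reading the forbidden swaps directly off the defining inequalities (or a Gale diagram) of $Q_4$. Either way one obtains $d(u,v)=5>4=8-4$, contradicting the Hirsch bound. As a complement, feeding this one example into the wedge/$d$-step machinery behind Theorem~\ref{thm:dstep} propagates the violation to every dimension $\ge 4$.

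The main obstacle is precisely this non-existence of a non-revisiting path from $u$ to $v$: writing down $Q_4$, its facets, vertices and dual graph, and exhibiting the length-$5$ path are all mechanical, but the absence of a $d$-step path is a genuine combinatorial fact about the specific polyhedron, whose verification needs either a careful (finite, but not wholly trivial) enumeration of short walks or a clever reading of the polyhedron's structure --- and here a direct machine computation of the graph is a perfectly legitimate shortcut.
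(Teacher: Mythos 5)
The paper itself does not prove this theorem; it states it with a citation to Klee--Walkup and refers to~\cite{KimSan:companion,KimSan:update} for an explicit description. So there is no ``paper proof'' for your argument to be compared against. Note also that the statement as printed contains a typo: the Klee--Walkup polyhedron has $8$ facets, not $5$ (this is also what makes the Hirsch bound $n-d=4$ that diameter $5$ violates). You silently corrected this by working with $2d=8$ facets, which is right.

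Your outline is the standard and correct strategy for the result: exhibit a simple unbounded $4$-polyhedron with $8$ facets, pick two vertices $u,v$ on complementary $4$-sets of facets, observe that any dual path needs at least $4$ steps with equality forcing a non-revisiting ($d$-step) path, exhibit a length-$5$ path for the upper bound, and then rule out any $4$-step path. The reduction of ``diameter $\ge 5$'' to ``no non-revisiting path from $u$ to $v$'' is exactly the mechanism that makes the $n=2d$ case special and matches the original Klee--Walkup analysis. The one thing I would flag is that, as written, the proposal is a plan rather than a proof: it never actually writes down the $8$ inequalities (or, dually, the simplicial $3$-ball), never lists the vertices and adjacencies, and never carries out the finite verification that no $4$-step walk from $\{1,2,3,4\}$ to $\{5,6,7,8\}$ stays in the graph. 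You acknowledge this yourself (``the crux''), and it is indeed a finite, mechanical check once the polyhedron is fixed, but without choosing a concrete $Q_4$ there is nothing to check. If you want a self-contained proof you should either transcribe the Klee--Walkup coordinates (or their Gale/oriented-matroid presentation as in~\cite{KimSan:companion}) and run the enumeration, or give the structural argument that the specific adjacencies block every $d$-step path.
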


See~\cite{KimSan:companion,KimSan:update} for a relatively simple description of the Klee-Walkup polyhedron, and its relation to a Hirsch-sharp $4$-polytope with nine facets and to the disproof by Todd~\cite {Todd:MonotonicBoundedHirsch} of the ``monotone Hirsch Conjecture''.
The parameters $n$ and $d$ in this example are smallest possible, as Klee and Walkup also showed. Moreover, it was later shown by Altshuler, Bokowski and Steinberg~\cite{AlBoSt:3spheres} that the Klee-Walkup non-Hirsch polyhedron is unique among simple polyhedra with that dimension and number of facets: every $4$-polyhedron with $8$ facets not combinatorially equivalent to it satisfies the Hirsch bound.

But the same paper of Klee and Walkup contains the more relevant (in our opinion) result that we stated as Theorem~\ref{thm:dstep}: that the Hirsch Conjecture (be it for polytopes, polyhedra, or simplicial balls and spheres) is equivalent to the special case $n=2d$. This special case was dubbed \emph{the $d$-step Conjecture} since it states that we can go from any vertex to any other vertex in (at most) $d$-steps. This result of Klee and Walkup was based in the following easy lemma (see, e.g., \cite[Lemma 3.1]{KimSan:update}):

\begin{lemma}[$d$-step Lemma, Klee-Walkup~\cite{KleWal:dstep}]
Let $P$ be a polyhedron with $n$ facets, dimension $d$, and a certain diameter $\delta$. Then, by a wedge on any facet of $P$, we obtain another polyhedron $P'$ of dimension $d+1$, with $n+1$ facets and with diameter (at least) $\delta$.
\end{lemma}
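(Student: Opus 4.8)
The plan is to describe the wedge construction explicitly and then verify the two claims — that the wedge over a facet of a $d$-polyhedron with $n$ facets is a $(d+1)$-polyhedron with $n+1$ facets, and that its diameter is at least $\delta$. First I would set up the wedge. Suppose $P = P(A,\mathbf b) \subseteq \R^d$ is given by inequalities $\langle a_i, \mathbf x\rangle \le b_i$ for $i\in[n]$, and let $F$ be the facet defined by $\langle a_1,\mathbf x\rangle \le b_1$. The wedge of $P$ over $F$, denoted $W = W_F(P)$, lives in $\R^{d+1}$ with coordinates $(\mathbf x, t)$ and is cut out by: the original inequality $\langle a_i,\mathbf x\rangle \le b_i$ for $i = 2,\dots,n$ (now read in $\R^{d+1}$, ignoring the $t$-coordinate), together with the two inequalities $\langle a_1,\mathbf x\rangle - t \le b_1$ and $\langle a_1,\mathbf x\rangle + t \le b_1$ (equivalently, $|t| \le b_1 - \langle a_1,\mathbf x\rangle$). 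Geometrically $W$ is the region between two copies of $P$ (the ``roofs'' $t = \pm(b_1 - \langle a_1,\mathbf x\rangle)$) glued along the ``ridge'' $F\times\{0\}$; I would include a one-line sanity check that $W$ is $(d+1)$-dimensional (it projects onto $P$ and has nonempty interior, e.g. a neighborhood of an interior point of $P$ times a small interval in $t$) and that it has exactly $n+1$ facets: the $n-1$ facets coming from $i=2,\dots,n$ (each is $W_{F\cap F_i}(F_i)$, still a genuine facet) plus the two roof facets, each of which is combinatorially a copy of $P$.

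Next I would analyze the vertices and the graph of $W$. A vertex of $W$ is the intersection of $d+1$ of the $n+1$ facets with generic position. The key combinatorial facts: (i) every vertex $v$ of $P$ gives rise to one or two vertices of $W$ — if $v\in F$ it lifts to a single vertex $(v,0)$ on the ridge; if $v\notin F$ it lifts to two vertices $(v, \pm(b_1-\langle a_1,v\rangle))$, one on each roof; and (ii) the two roof facets are each isomorphic, as polyhedra, to $P$ via the projection $(\mathbf x,t)\mapsto \mathbf x$. In particular the ``upper roof'' contains an isomorphic copy of $G(P)$ as a subgraph of $G(W)$. So given two vertices $u,w$ of $P$ realizing the diameter $\delta = \operatorname{dist}_{G(P)}(u,w)$, pick their lifts $u^+, w^+$ onto the upper roof; I claim $\operatorname{dist}_{G(W)}(u^+, w^+) \ge \delta$. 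This needs the reverse inequality: any path in $G(W)$ from $u^+$ to $w^+$ projects to a walk in $G(P)$ from $u$ to $w$ of no greater length. The projection $\pi: G(W)\to G(P)$ sending a vertex of $W$ to the vertex of $P$ below it: one checks that if $(v,s)$ and $(v',s')$ are adjacent in $W$ then either $v = v'$ (the edge is ``vertical'', lying over a single vertex of $P$ — this happens only for edges on the ridge-incident structure) or $vv'$ is an edge of $P$; hence $\pi$ maps edges to edges or to single vertices, so $\pi$ of a path of length $\ell$ is a walk of length $\le \ell$ from $u$ to $w$ in $G(P)$, forcing $\ell\ge\delta$. This gives $\operatorname{diam}(W)\ge\delta$.

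The main obstacle, and the place to be careful, is exactly the adjacency analysis underlying the projection claim — ruling out ``shortcuts'' in $W$ that do not descend to $P$. Concretely one must check: (a) two vertices on the same roof are adjacent in $W$ iff their images are adjacent in $P$ (immediate from the roof being isomorphic to $P$); (b) a vertex on the upper roof and a vertex on the lower roof are never adjacent unless both lie on the ridge $F\times\{0\}$, in which case they coincide; and (c) a vertex on a roof is adjacent to a ridge vertex $(v,0)$ only when the corresponding vertices of $P$ are equal or adjacent. All three follow from counting which facets a given edge of $W$ must lie on — an edge lies on $d$ of the $n+1$ facets — and noting that the two roof inequalities are ``parallel'' in the $\mathbf x$-variables, so a vertex cannot be tight on one roof unless it is near $F$; writing this out is the one genuinely non-routine verification. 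Everything else (dimension count, facet count, the $\pi$-contracts-distance argument) is then bookkeeping. For the unbounded case no change is needed: the wedge of a polyhedron is a polyhedron, and the argument used only the facet/vertex/edge combinatorics, not boundedness. This establishes the lemma with ``diameter at least $\delta$'' — and in fact one always gets equality, though the statement as phrased only requires the inequality.
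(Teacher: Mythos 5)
The paper does not actually prove this lemma; it only cites Klee--Walkup and refers the reader to \cite[Lemma~3.1]{KimSan:update}. So there is no in-paper argument to compare against, and your proposal must stand on its own. The core of your argument---that the projection $\pi:(\mathbf{x},t)\mapsto \mathbf{x}$ sends every edge of $W$ either to an edge of $P$ or to a single vertex, so a path in $G(W)$ between the two upper-roof lifts of a diametral pair $u,w$ projects to a walk in $G(P)$ of no greater length---is the standard argument and is correct; together with the facet/dimension count it does establish the lemma.

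However, your ``careful verification'' item (b) is false, and it contradicts the (correct) main claim you state one paragraph earlier. If $v$ is a vertex of $P$ \emph{not} on $F$, its two lifts $(v,+\ell(v))$ and $(v,-\ell(v))$---one strictly on the upper roof, one strictly on the lower roof, neither on the ridge---\emph{are} adjacent in $W$: the vertical segment over $v$ lies on the $d$ non-roof facets through $v$ and is a genuine edge of $W$. So ``an upper-roof vertex and a lower-roof vertex are never adjacent unless both lie on the ridge'' is wrong, and your parenthetical that vertical edges ``happen only for edges on the ridge-incident structure'' has it exactly backwards: vertical edges occur precisely over vertices \emph{off} $F$, away from the ridge. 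This does not break the proof, because such an edge has $v=v'$ and hence projects to a single vertex, which your main claim already allows; but the case analysis (a)/(b)/(c), which you flag as the one non-routine step, is the part that is wrong. The clean way to say it: every edge of $W$ lies on $d$ of the $n+1$ facets, and by whether $0$, $1$, or $2$ of the two roofs are among them, it is (respectively) a vertical edge over a vertex $v\notin F$, an edge of one roof (projecting to an edge of $P$), or a ridge edge (projecting to an edge of $F$). In all three cases $\pi$ collapses or preserves the edge, which is exactly what the distance bound requires.
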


The first ingredient in the construction of bounded counter-examples to the Hirsch Conjecture is a stronger version of the $d$-step lemma for a particular class of polytopes. We call a polytope a \emph{spindle} if it has two specified vertices $u$ and $v$ such that every facet contains exactly one of them. Put differently, a spindle is the intersection of two pointed cones with apices at $u$ and $v$ (see Figure~\ref{fig:spindle}). The \emph{length} of a spindle is the graph distance between $u$ and $v$.
Spindles that are simple at $u$ and $v$ coincide with what are classically called \emph{Dantzig figures}.
Our statement is, however, about spindles that are \emph{not} simple:

\begin{figure}[htb]
\begin{center}
\resizebox{0.6\linewidth}{!}{
  \input{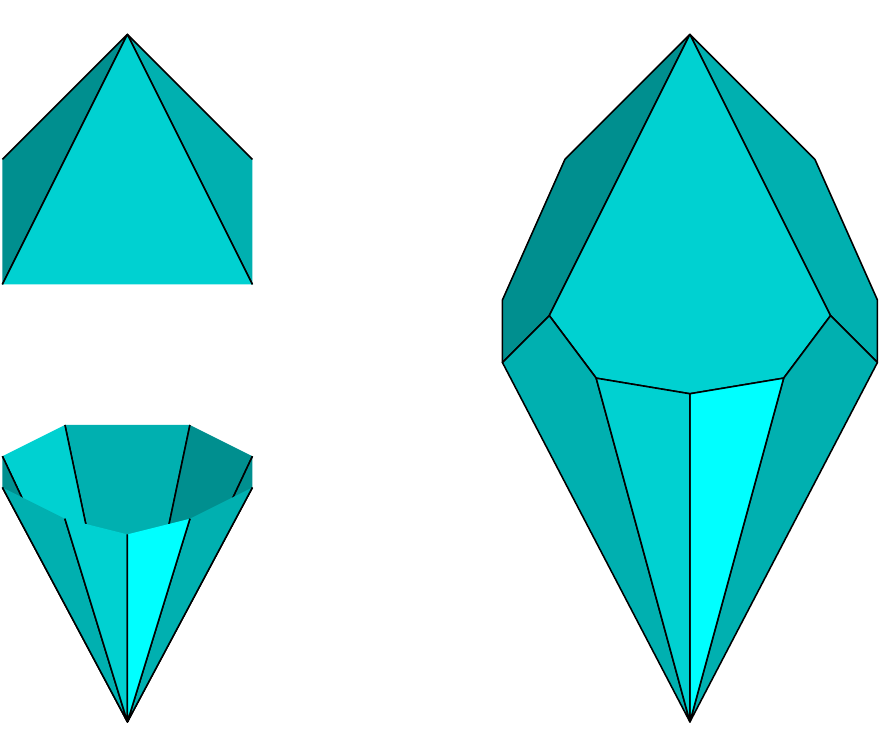_t}
}
\caption{A spindle. 
}
\label{fig:spindle}
\end{center}
\end{figure}

\begin{theorem}[strong $d$-step Lemma, Santos~\cite{Santos:Hirsch-counter}]
\label{thm:spindle}
Let $P$ be a spindle with $n$ facets, dimension $d$, and a certain length $l$, and suppose $n>2d$. Then, by a wedge on a certain facet of $P$ followed by a perturbation, we can obtain another spindle $P'$ of dimension $d+1$, with $n+1$ facets and with length (at least) $l+1$.
\end{theorem}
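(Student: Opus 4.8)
The plan is to combine the classical $d$-step (wedge) lemma with a careful perturbation argument, exploiting the hypothesis $n>2d$ to guarantee that the perturbation can be performed while preserving the spindle structure and increasing the length. First I would recall that a wedge of $P$ over a facet $F$ is the polytope $W = \{(\mathbf x, t) : \mathbf x \in P', t \ge 0, t \le \phi(\mathbf x)\}$ where $P'$ is a copy of $P$, $\phi$ is an affine function vanishing on the hyperplane spanned by $F$ and positive on the rest of $P$; combinatorially $W$ has dimension $d+1$, one more facet than $P$ (the ``lower'' facet $\{t=0\}$ is a copy of $P$, the ``upper'' facet $\{t=\phi(\mathbf x)\}$ is a copy of $P$, the ``vertical walls'' $G\times[0,\dots]$ are the wedges over the facets $G\ne F$, and the ``ridge'' $F$ becomes identified). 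Standard facts about wedges say that the two apices $u,v$ of the spindle lift to vertices $\tilde u,\tilde v$ of $W$ and that $\vdist_W(\tilde u,\tilde v)\ge \vdist_P(u,v)=l$. The key point requiring the strengthening is that the wedge by itself does \emph{not} produce a spindle — the lower facet $\{t=0\}$ contains \emph{both} lifts of $u$ and $v$ (since it is a whole copy of $P$), so the defining condition ``every facet contains exactly one of the two apices'' fails.

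The next step is to choose the facet $F$ on which to wedge so that, after a generic perturbation of the upper facet, this defect can be repaired. The idea is: pick $F$ to be a facet through $u$ (say); then in the wedge, tilt the upper facet $\{t=\phi(\mathbf x)\}$ slightly — replace $\phi$ by $\phi_\varepsilon$, an affine perturbation — so that $\tilde v$ is pushed off the lower facet but $\tilde u$ remains on it, or more precisely so that the new polytope $P'$ again has two apices $u',v'$ with every facet containing exactly one. One must check that the perturbation (i) keeps $P'$ combinatorially a spindle, (ii) keeps the facet count at $n+1$, and (iii) does not \emph{decrease} the distance between the two apices — in fact the perturbation should be arranged so that the path from $u'$ to $v'$ is forced to pass through one extra vertex, giving length $\ge l+1$. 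This is where the hypothesis $n > 2d$ enters: there must be enough ``room'' (enough facets beyond the $2d$ needed for a Dantzig-figure-like minimal spindle) for the perturbation to be non-trivial and for the length to genuinely go up rather than stay the same; when $n = 2d$ the spindle is already ``tight'' in Santos's dimension/facet trade-off and no such improvement is possible.

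The main obstacle, and the technical heart of the argument, is exactly point (iii) combined with (i): showing that a sufficiently small generic perturbation both preserves the spindle structure and strictly increases the length. Preserving the combinatorial type under small perturbations is routine for simple polytopes but spindles here are deliberately \emph{not} simple at $u$ and $v$, so one has to argue that the non-simple vertices $u'$ and $v'$ survive (their defining facets still have a common point) while the perturbation resolves the degeneracy at the images of $u$ and $v$ on the lower facet. For the length increase, I would track a shortest $u'$–$v'$ path: any such path must use the vertical direction to climb from the lower region to $v'$, and the perturbation is chosen so that the ``entry point'' into the upper facet is a new vertex not present before the wedge, forcing one extra edge. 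I expect the cleanest way to make this rigorous is to describe $P'$ explicitly by inequalities — take the inequality description of $P$, form the wedge inequalities, and add one perturbing term — and then verify the vertex/facet incidences directly, rather than arguing combinatorially; the hypothesis $n>2d$ will appear as the condition that the relevant coefficient in the perturbation can be taken nonzero without collapsing any facet.
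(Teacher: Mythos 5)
Your strategy (wedge over a facet, then perturb one of the two base facets to restore the spindle property) is the correct skeleton, and you correctly diagnose that the naive wedge fails to be a spindle because a base facet contains lifts of both apices. But the key mechanism is misidentified in two ways. First, the perturbation you describe is not the right one: wedging over a facet $F\ni u$ produces a \emph{single} lift $\tilde u=(u,0)$ of $u$ that lies on \emph{both} bases $W_0=\{t=0\}$ and $W_1=\{t=\phi(x)\}$ (because $\phi(u)=0$), while $v$ lifts to two vertices $\tilde v_0,\tilde v_1$, each on only one base. Taking apices $\tilde u$ and $\tilde v_1$, the only offending facet is $W_1$, and the perturbation must push $W_1$ off $\tilde u$ (replace $\phi$ by some $\phi'$ with $\phi'(u)>0$) while keeping it through $\tilde v_1$. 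Your description --- ``tilt the upper facet so that $\tilde v$ is pushed off the lower facet but $\tilde u$ remains on it'' --- is geometrically incoherent (perturbing the upper hyperplane cannot change who lies on $\{t=0\}$) and in any case aims at the wrong vertex. Second, and more importantly, you never pin down what $n>2d$ actually buys. Since every facet of a spindle contains exactly one apex, $n$ equals the number of facets through $u$ plus the number through $v$; thus $n>2d$ says precisely that \emph{at least one apex is non-simple}. This is indispensable: after the perturbation, $\tilde u$ has lost one supporting facet, so it lies on only (number of facets of $P$ through $u$) of the facets of the $(d+1)$-polytope $W'$, and this count must be at least $d+1$ for $\tilde u$ to survive as a vertex. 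Therefore $F$ must be chosen through the non-simple apex, not an arbitrary one, and your gloss about ``enough room for the perturbation'' and ``not collapsing a facet'' gestures at this without isolating it.

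The remaining claim --- that a suitable small perturbation increases the length by at least one, not merely preserves it --- is also left unargued. ``The entry point into the upper facet is a new vertex, forcing one extra edge'' is a plausible heuristic, but turning it into a proof means tracking how the graph near $\tilde u$ changes under the perturbation and showing that every $\tilde u$-to-$\tilde v_1$ path in $W'$ is longer than some $u$-to-$v$ path in $P$. Santos carries this out in the dual prismatoid language via one-point suspensions, and it is a genuine piece of work, not a perturbation formality. As written, neither of the two substantive steps --- why the perturbed wedge is a spindle, and why its length is at least $l+1$ --- is actually established.
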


Observe that this statement is stronger than the classical $d$-step lemma only in the ``$+1$'' in $l$. This is enough, however, to obtain the following crucial corollary:

\begin{corollary}
\label{coro:spindle}
If a spindle $P$ has length greater than its dimension, then applying  Theorem~\ref{thm:spindle} $n-2d$ times we obtain a polyhedron violating the Hirsch Conjecture. More precisely, from a $d$-spindle with $n>2d$ facets and length $l>d$ we obtain a polyhedron of dimension $D=n-d$, with $N=2(n-d)$ facets, and diameter at least $l +(n-2d) >D=N-D$.
\qed
\end{corollary}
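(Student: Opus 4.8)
The plan is to iterate Theorem~\ref{thm:spindle} exactly $n-2d$ times, keeping track of how the four relevant quantities (dimension, number of facets, length, and the gap ``facets minus twice dimension'') evolve at each step.

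First I would observe that a single application of Theorem~\ref{thm:spindle} to a spindle with parameters $(d, n, l)$ and $n > 2d$ produces a spindle with parameters $(d+1, n+1, l+1)$. The key bookkeeping point is what happens to the quantity $n - 2d$: it becomes $(n+1) - 2(d+1) = n - 2d - 1$, so it strictly decreases by one at each step while remaining positive until the last application. Hence, starting from a $d$-spindle with $n > 2d$ facets and length $l > d$, I can legitimately apply the theorem a total of $n - 2d$ times. After $k$ steps the spindle has dimension $d + k$, has $n + k$ facets, and has length at least $l + k$.

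Next I would set $k = n - 2d$ and read off the final parameters. The dimension becomes $D := d + (n - 2d) = n - d$; the number of facets becomes $N := n + (n - 2d) = 2(n - d) = 2D$; and the length becomes at least $l + (n - 2d)$. Since $l > d$, we get length at least $l + n - 2d > d + n - 2d = n - d = D$. Finally, because a spindle of length $\ell$ has two vertices at graph distance $\ell$, the diameter of this final polyhedron is at least $l + (n - 2d)$, which exceeds $D = N - D$ (the last equality using $N = 2D$). This is precisely a violation of the Hirsch bound $N - D$ for a polyhedron of dimension $D$ with $N$ facets.

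I do not expect any genuine obstacle here: all the work is in Theorem~\ref{thm:spindle}, which I am permitted to assume, and the corollary is pure arithmetic accounting once one checks that the hypothesis ``$n > 2d$'' is preserved (strictly) at every intermediate stage so that the theorem remains applicable. The only point requiring a moment's care is that after the final step the hypothesis $n > 2d$ fails (it becomes equality), but that is fine since we stop applying the theorem at exactly that point; we still retain the conclusion about the length/diameter accumulated over the $n - 2d$ successful applications.
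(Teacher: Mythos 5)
Your proof is correct and follows exactly the argument the paper intends (the corollary is stated with a \qed precisely because it is immediate bookkeeping from Theorem~\ref{thm:spindle}). Your careful check that the hypothesis $n > 2d$ is preserved through all $n-2d-1$ intermediate steps and only becomes equality after the final application is exactly the one point that needs a moment's thought, and you handle it correctly.
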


What Santos~\cite{Santos:Hirsch-counter}  then does is he constructs a spindle with dimension $5$ and length $6$. Let us mention that he does this (and proves Theorem~\ref{thm:spindle}) in a dual setting in which the Hirsch question is about the dual diameter of a polytope, similar to the one used in Section~\ref{sec:complexes}. The duals of spindles are called \emph{prismatoids}.

In the original paper~\cite{Santos:Hirsch-counter}, a $5$-spindle of length $6$ with $48$ facets is constructed. 
In a subsequent paper of Santos with Matschke and Weibel~\cite{MaSaWe:5prismatoids} the number of facets needed to construct a $5$-spindle (or, vertices to construct a $5$-prismatoid) of length $6$ is reduced to $25$. The number of facets is irrelevant for the main conclusion (the existence of a non-Hirsch polytope) but is important for the size of it. In particular, the non-Hirsch polytopes of \cite{Santos:Hirsch-counter} and \cite{MaSaWe:5prismatoids} have, respectively, dimensions $43$ and $20$, the latter being the smallest dimension in which we now know the bounded Hirsch Conjecture to fail. This difference in size and complexity also affects how explicit the example is. The original one is too big to be computed completely (it requires $38$ iterations of Theorem~\ref{thm:spindle}) while the new one has been explicitly computed by Weibel. See details in~\cite{MaSaWe:5prismatoids}. Another interesting result of \cite{MaSaWe:5prismatoids} is that in dimension $5$ there are arbitrarily long spindles:

\begin{theorem}[Matschke-Santos-Weibel~\cite{MaSaWe:5prismatoids}]
\label{thmPrismatoidWithArbitraryLargeWidth-intro}
For every $k$ there is
a $5$-dimensional spindle with $12k(6k-1)$ facets and of width $4+k$.
\end{theorem}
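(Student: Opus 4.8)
The plan is to produce an explicit one-parameter family of $5$-dimensional spindles $P_k$ and bound their length (= graph distance between the two apices $u,v$) from below by $4+k$. The natural strategy, following the approach of Matschke--Santos--Weibel, is to work in the dual picture: a $5$-spindle of length $\ell$ corresponds to a $5$-prismatoid of width $\ell$, where a prismatoid is a polytope all of whose vertices lie on two parallel facets $Q^+$ and $Q^-$, and its width is the minimum number of steps needed to go from $Q^+$ to $Q^-$ in the dual graph (equivalently, the minimum number of ``hyperplane crossings'' of a facet-path). The width of a prismatoid $Q\subset\R^5$ with the two base facets at heights $\pm1$ can be read off from the \emph{Gale dual} or, more concretely, from the combinatorics of the intersections of the hyperplanes spanned by non-base facets with the hyperplane $\{x_5=0\}$; this reduces the width estimate to a statement about a family of line/great-circle arrangements in the $3$-sphere $\{x_5=0\}\cap S^4$, i.e., in $S^3$.

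First I would recall the reduction, already used in \cite{MaSaWe:5prismatoids}, that the width of a $5$-prismatoid $Q$ equals (up to an additive constant $2$, accounting for the first and last steps into the base facets) the minimum number of ``cells'' crossed by a path in the arrangement $\mathcal A$ in $S^3$ obtained by intersecting the $x_5=0$ hyperplane with the hyperplanes spanned by facets of $Q$ that meet both bases. Concretely, one wants $Q^+$ and $Q^-$ to project (from the two apex-directions) to two sets of points in $S^3$ whose convex position forces any path in the dual graph to cross many of the separating hyperplanes. Second, I would write down the explicit vertex coordinates: take $Q^+$ and $Q^-$ to be suitable ``staircase''-type configurations depending on $k$ — the construction in \cite{MaSaWe:5prismatoids} uses roughly $6k$ points on each base arranged so that the associated great $2$-spheres in $S^3$ form an arrangement through which no short path connects the two relevant regions. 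The count $12k(6k-1)$ facets arises as a vertex count of the dual: $O(k)$ vertices on each base plus the $O(k^2)$ ``bipartite'' facets joining them, and one checks this by a direct (but routine) face enumeration or by exhibiting the facet-defining inequalities.

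Third, and this is where the real work lies, I would prove the lower bound $\mathrm{width}(Q_k)\ge 4+k$. The idea is topological/combinatorial: associate to any facet-path $\Gamma$ from $Q^+$ to $Q^-$ a curve $\gamma$ in $S^3$ transverse to the arrangement $\mathcal A$, and show that $\gamma$ must cross at least $k+2$ of the great $2$-spheres of $\mathcal A$ because the two endpoints lie in regions separated by that many spheres — this is a ``linking''/separation argument, using that the spheres in $\mathcal A$ are engineered (via the staircase coordinates) to be pairwise ``interleaved'' so that consecutive ones genuinely must all be crossed. Equivalently, one exhibits $k+2$ linear functionals each of which changes sign along $\Gamma$ and argues these sign changes happen at distinct steps. \textbf{The main obstacle} is precisely this lower bound: the upper bound (width $\le 4+k$) is a short explicit path, the facet count is bookkeeping, but proving that \emph{no} shorter path exists requires the careful choice of coordinates that makes the $S^3$-arrangement rigid enough, plus verifying the transversality/sign-change bookkeeping does not lose or double-count crossings. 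I would handle this by reusing the $S^3$-arrangement lemma of \cite{MaSaWe:5prismatoids} verbatim if it applies, and otherwise by an explicit perturbation argument showing general position, followed by the separation count. The conclusion then follows by dualizing back: $P_k:=Q_k^*$ is a $5$-spindle with $12k(6k-1)$ facets and length $\ge 4+k$ (and one notes length $=4+k$ is immediate, so equality holds).
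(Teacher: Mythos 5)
The paper itself does not prove this theorem: it is only stated and attributed to Matschke--Santos--Weibel~\cite{MaSaWe:5prismatoids}, so there is no in-paper proof against which to compare your proposal. Judged on its own terms, your outline names the right ingredients from that reference (dualize to a $5$-prismatoid, translate width into a question about the normal fans of the two base facets intersected with $S^3$, construct an explicit one-parameter family, then prove a matching lower bound) but it misdescribes the reduction and leaves the only hard step unexecuted.

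In particular, the object on $S^3$ is not a single arrangement $\mathcal A$ but a \emph{pair} of geodesic cell decompositions, one arising from each base facet of the prismatoid, and the width is $2$ plus a combinatorial distance between those two maps in their common refinement; and the construction of~\cite{MaSaWe:5prismatoids} is not a ``staircase'' of points but two interleaved families of great circles coming from the two rulings of a Clifford torus in $S^3$, chosen so that passing from one family's pattern to the other's forces $k$ intermediate steps. Your lower-bound argument (exhibit $k+2$ separating functionals, each changing sign at a distinct step) is not actually carried out: you specify no functionals, do not argue that the sign changes occur at distinct steps, and do not establish that the endpoint regions are separated by that many of them. You correctly flag this as the main obstacle, and then propose to ``reuse the $S^3$-arrangement lemma of~\cite{MaSaWe:5prismatoids} verbatim,'' which reduces the proposal to a pointer to the very source the theorem cites. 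Likewise the count $12k(6k-1)$ of facets (vertices of the dual prismatoid) is asserted as bookkeeping with no concrete construction to count from. So the high-level route is plausible, but there is a genuine gap precisely at the step you yourself identify as the crux.
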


Why start with spindles of dimension $5$? It is easy to show that the length of a $3$-spindle is at most three. Regarding $4$-spindles, it was left as an open question in the first version of~\cite{Santos:Hirsch-counter} whether they can have length greater than $4$, but the question was soon answered in the negative by Santos, Stephen and Thomas~\cite{SaStTh:4prismatoids}. Quite remarkably, the proof of this statement is just topological graph theory in the most classical sense of the expression; that is, the study of graph embeddings in closed surfaces. What~\cite{SaStTh:4prismatoids} shows is that when two arbitrary graphs $G_1$ and $G_2$ are embedded (transversally to one another) in the $2$-sphere, it is always possible to go from a vertex of $G_1$ to  vertex of $G_2$ via a single crossing of an edge from $G_1$ and $G_2$. This is not true for other surfaces (e.g., the torus). More significantly, the fact that this is not true in the torus is at the heart of all the constructions of $5$-spindles of length greater than $5$, via the standard Clifford embedding of the torus in the $3$-sphere and a reduction of the combinatorics of $5$-spindles to the study of certain cell decompositions of the $3$-sphere.

In the rest of this section we study how good (or how bad) the counter-examples to the Hirsch Conjecture that can be obtained with this spindle method are. Following~\cite{Santos:Hirsch-counter} we call \emph{(Hirsch) excess} of a $d$-polytope $P$ with $n$ facets and diameter $\delta$ the quantity
\[
\frac{\delta}{n-d} -1.
\]
This is positive if and only if the diameter of $P$ exceeds the Hirsch bound. Excess is a significant parameter since, as shown in~\cite[Section 6]{Santos:Hirsch-counter}, from any non-Hirsch polytope one can obtain infinite families of them with (essentially) the same excess as the original, even in fixed (but high) dimension:

\begin{theorem}[Santos~\cite{Santos:Hirsch-counter}]
\label{thm:asymptotic2}
Let $P$ be a non-Hirsch polytope of dimension $d$ and excess $\epsilon$. Then for each $k\in\N$ there is an infinite family of non-Hirsch polytopes of dimension $kd$ and with excess greater than
\[
\left(1-\frac{1}{k}\right)\epsilon.
\]
\end{theorem}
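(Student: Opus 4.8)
\textbf{Proof plan for Theorem~\ref{thm:asymptotic2}.}

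The plan is to reduce the statement to the behaviour of the Hirsch excess under two elementary operations on polytopes — taking \emph{products} and gluing copies along facets — and to exploit that the relevant numerical data are additive. Throughout, write $n$ for the number of facets of $P$ and $\delta=\diam(P)=(1+\epsilon)(n-d)$ for its diameter, and for an arbitrary polytope $Q$ with $n_Q$ facets put $E(Q):=\diam(Q)-(n_Q-\dim Q)$ for its \emph{absolute excess}, so that the excess of $Q$ equals $E(Q)/(n_Q-\dim Q)$ and $Q$ is non-Hirsch iff $E(Q)>0$. The first step is to record that a product $Q_1\times Q_2$ has dimension $\dim Q_1+\dim Q_2$, has exactly $n_{Q_1}+n_{Q_2}$ facets (each facet being a facet of one factor times the whole of the other), and has graph $\graph(Q_1)\,\square\,\graph(Q_2)$, whence $\diam(Q_1\times Q_2)=\diam(Q_1)+\diam(Q_2)$. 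Consequently both $E(\cdot)$ and the quantity $n_Q-\dim Q$ are additive under products.

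From this, the $k$-fold power $P^k:=P\times\dots\times P$ is a polytope of dimension $kd$ with $kn$ facets and $E(P^k)=kE(P)=k\epsilon(n-d)$, so that $n_{P^k}-\dim P^k=k(n-d)$ and its excess is exactly
\[
\frac{k\epsilon(n-d)}{k(n-d)}=\epsilon>\left(1-\tfrac1k\right)\epsilon .
\]
Thus $P^k$ is \emph{one} non-Hirsch polytope of dimension $kd$ with the required excess, and the whole content of the theorem is to upgrade this to an \emph{infinite} family. Since there are only finitely many combinatorial types of polytopes with a bounded number of facets, an infinite family forces the facet count to grow, so we must manufacture dimension-$kd$ polytopes with arbitrarily many facets \emph{without} letting the excess drop below $(1-\tfrac1k)\epsilon$; note this rules out padding by simplices or cubes, which are Hirsch-sharp and hence dilute the ratio towards $0$.

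To do this I would glue copies of $P$ into \emph{chains}. Gluing $t$ combinatorial copies of $P$ successively along facets — choosing the facets so that the endpoints of a diameter-realising dual path of one copy are identified with those of the next — yields again a $d$-polytope, call it $C_t$, with $\Theta(tn)$ facets and dual diameter at least essentially $t\delta$, hence $E(C_t)\ge t\,E(P)-O(t)$. Taking one factor from each of $k$ such chains of a common length $t$ and balancing them, the product $Q_t:=C_t^{(1)}\times\dots\times C_t^{(k)}$ has dimension $kd$, has a number of facets tending to infinity with $t$, and, by additivity,
\[
\operatorname{excess}(Q_t)\;\ge\;\frac{k\bigl(t\epsilon(n-d)-O(t)\bigr)}{k\bigl(tn-O(t)\bigr)}\;\xrightarrow[\ t\to\infty\ ]{}\;\frac{\epsilon(n-d)}{n}-O\!\left(\tfrac{d}{n}\right).
\]
Keeping instead $k-1$ factors equal to $P$ itself and letting only the last factor be a long chain gives the cleaner bookkeeping that produces exactly the constant $(1-\tfrac1k)$: the $k-1$ rigid copies contribute excess \emph{exactly} $\epsilon$ and, after normalising sizes so that the single stretchable factor carries at most a $1/k$ share of the denominator, the mediant of the ratios stays $\ge\frac{(k-1)(n-d)}{k(n-d)}\,\epsilon=(1-\tfrac1k)\epsilon$, with strict inequality for all large $t$ after a slight perturbation of the parameters.

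The main obstacle is precisely the gluing step made rigorous: one must realise a chain of $t$ copies of $P$ as an \emph{actual convex} $d$-polytope whose dual diameter really is $\approx t\delta$, and control the additive $\Theta(t)$ slack in both the diameter and the facet count so that it is negligible against $\epsilon(n-d)$ — for which one uses, crucially, that the known counterexamples violate the Hirsch bound by a positive \emph{fraction} of $n-d$, not merely by a constant. This is cleanest in the dual, simplicial-complex language of Section~\ref{sec:complexes}, where ``product'' becomes ``join'' (as in Lemma~\ref{lemma:join}) and ``chaining'' becomes a connected-sum-type operation on simplicial balls; the strong $d$-step construction of Theorem~\ref{thm:spindle}, together with the arbitrarily long $5$-spindles of Theorem~\ref{thmPrismatoidWithArbitraryLargeWidth-intro}, provide exactly the flexibility needed to keep the excess from degrading while the size grows.
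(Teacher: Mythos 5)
Your plan correctly isolates the two necessary ingredients — products preserve the Hirsch excess, and some gluing operation is needed to grow the number of facets — but the way you combine them does not meet the target $(1-\tfrac1k)\epsilon$, and a crucial arithmetic observation is missing. In $Q_t=C_t^{(1)}\times\dots\times C_t^{(k)}$ you glue in dimension $d$ first and multiply afterwards; since all factors are equal, $Q_t$ has the same excess as a single chain $C_t$. The provable lower bound on the dual diameter of a chain of $t$ copies loses about one step per glued joint, so the provable excess of $C_t$ is roughly $\epsilon-\tfrac{t-1}{t(n-d)}$, which tends to $\epsilon-\tfrac1{n-d}$. Since a non-Hirsch $P$ only guarantees $\delta-(n-d)\ge1$, i.e.\ $\epsilon\ge\tfrac1{n-d}$ — and the known counter-examples satisfy this with equality — that limit is $0$, and the bound drops below $(1-\tfrac1k)\epsilon$ already for $t\ge 3$, $k\ge 2$. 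The alternative $P^{k-1}\times C_t$ fares no better: the chain's share of $n'-d'$ is $\tfrac{t}{k-1+t}$, which tends to $1$ and cannot be kept below $1/k$ for any $t>1$, so the ``normalising sizes'' step cannot produce an infinite family.

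The fix, which is what Santos does, is to take the product \emph{first} and glue afterwards: form $P^k$ (dimension $kd$, $kn$ facets, diameter $k\delta$, excess exactly $\epsilon$), then glue $m$ copies of $P^k$ at diameter-realising vertices. The same one-per-joint loss is now measured against $n'-d'=mk(n-d)$, giving excess at least $\epsilon-\tfrac{m-1}{mk(n-d)}>\epsilon-\tfrac1{k(n-d)}$. The step missing from your write-up — and the exact source of the constant $(1-\tfrac1k)$ — is the integrality bound $\delta-(n-d)\ge 1$, equivalently $\epsilon(n-d)\ge1$: it gives $\tfrac1{k(n-d)}\le\tfrac{\epsilon}{k}$ and hence excess strictly greater than $(1-\tfrac1k)\epsilon$ for every $m$. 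Without it, neither the factor $1/k$ nor the strict inequality appears. Also, the invocation of Theorems~\ref{thm:spindle} and~\ref{thmPrismatoidWithArbitraryLargeWidth-intro} is off-base here: those results produce the initial counter-example, not the infinite family grown from a given one.
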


The excess of the non-Hirsch polytope produced via Theorem~\ref{thm:spindle} from a $d$-spindle $P$ of length $l$ and $n$ vertices equals
\[
\frac{l-d}{n-d},
\]
so we call this quotient the \emph{(spindle) excess} of $P$. The spindle of~\cite{MaSaWe:5prismatoids}, hence also the non-Hirsch polytope obtained from it, has excess $1/20$. This is the greatest excess of a spindle or polytope constructed so far. (The excess of the Klee-Walkup \emph{unbounded} non-Hirsch polyhedron is, however, $1/4$). 

It could seem that the arbitrarily long spindles mentioned in Theorem~\ref{thmPrismatoidWithArbitraryLargeWidth-intro} should lead to non-Hirsch polytopes of greater excess. This is not the case, however, because their number of facets  grows quadratically with the length. So, asymptotically, their spindle excess is zero.

On the side of upper bounds, the Barnette-Larman general bound for the diameters of polytopes~\cite{Larman:upper-bound} (see the version for connected layer families in Theorem~\ref{thm:bl}) implies that the excess of spindles of dimension $d$ cannot exceed $2^{d-2} /3$. For dimension $5$, this is improved to $1/3$ in~\cite{MaSaWe:5prismatoids}.
What are the implications of this? Of course, using spindles of higher dimension it may be possible to get non-Hirsch polytopes of great excess. But the main point of the spindle method is that it relates non-Hirschness to length and dimension alone, regardless of number of facets or vertices, which gives a lot of freedom to construct complicated spindles in fixed dimension. It seems to us that giving up ``fixed dimension''  in this method leads to a problem as complicated as the original Hirsch question. On the other hand, Barnette-Larman's bound implies that the method in fixed dimension cannot produce non-Hirsch polytopes whose excess is more than a constant. That is, the method does not seem to be good enough to disprove a ``linear Hirsch Conjecture'' (in case it is false).

\subsection{The Hirsch Conjecture holds for flag normal complexes}
\label{sec:flag-complexes}

A simplicial complex $C$ is a \emph{flag complex} (or a \emph{clique complex}) if, whenever the graph of $C$ contains the complete graph on a certain subset $S$ of vertices, we have that $S$ is a face in $C$. (Here we mean the usual graph of $C$, not the adjacency or dual graph). Equivalently, $C$ is flag if every \emph{minimal non-face} (that is, every inclusion minimal subset of vertices that is not a face) has cardinality two. 
Examples of flag complexes include the barycentric subdivisions of arbitrary simplicial complexes.

In this section we reproduce Adiprasito and Benedetti's recent proof of the Hirsch bound for simple polytopes whose polar simplicial complex is flag~\cite{AdiBen:flag-complexes}. 
The proof actually works not only for polytopes, but for all \emph{normal} (or \emph{locally strongly connected}, in the terminology of~\cite{JosIzm:branched-coverings}) and flag complexes. That is to say:

\begin{theorem}[Adiprasito-Benedetti~\cite{AdiBen:flag-complexes}]
\label{thm:hirsch-for-flag}
Let $C$ be a normal and flag pure simplicial complex of dimension $d-1$ on $n$ vertices. Then the adjacency graph of $C$ has diameter at most $n-d$.
\end{theorem}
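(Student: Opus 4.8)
The plan is to prove the statement by induction on $n-d$, using normality to pass to links and flagness to control distances. The base case $n = d$ is trivial: the complex is a single facet. For the inductive step, let $C$ be normal and flag of dimension $d-1$ on $n$ vertices, and let $\sigma, \tau$ be two facets realizing the adjacency diameter. The key idea is to pick a vertex $v$ of $\sigma$ and consider the star $\st_C(v)$, which (writing $\st_C(v) = v * \lk_C(v)$) has the property that $\lk_C(v)$ is again a pure simplicial complex, of dimension $d-2$; crucially, normality is inherited by links, and flagness is inherited by links as well (a minimal non-face of $\lk_C(v)$ together with $v$ would be a non-face of $C$, which by flagness has size $2$, forcing the minimal non-face of the link to have size at most $2$ — one checks it is exactly $2$). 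So $\lk_C(v)$ is a normal flag $(d-2)$-complex; if it has $n'$ vertices then by the inductive hypothesis its adjacency graph has diameter at most $n' - (d-1)$, and since the adjacency graph of $\st_C(v)$ is isomorphic to that of $\lk_C(v)$, the same bound holds inside the star.

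The heart of the argument is then a ``ball-growing'' or ``greedy'' estimate: starting from $\sigma$, walk within $\st_C(v)$ to reach every facet of $C$ that contains $v$, using the link bound above; then observe that leaving the star costs one step and consumes (in an appropriate bookkeeping) at least one new vertex, namely the vertex of the new facet not present among those seen so far. Concretely, I would set up the induction so that the distance from $\sigma$ to $\tau$ is bounded by (distance within $\st_C(v)$ to some facet adjacent to a facet outside the star) $+\ 1\ +$ (distance in the complex $C' = C \setminus \st_C(v)$ obtained by deleting the vertex $v$), and argue that $C'$, or rather the subcomplex induced on $[n] \setminus \{v\}$ consisting of the facets of $C$ not containing $v$, is again normal and flag on $n - 1$ vertices of dimension $d-1$ — so induction on $n$ (with $d$ fixed) applies and yields diameter at most $(n-1) - d$. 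Adding the one crossing step and the at most $n' - (d-1)$ steps inside the star, and checking that $n'$ and the vertex count of $C'$ combine correctly (the shared ridge between a facet in the star and one outside forces $n' - 1$ of the link's vertices to reappear in $C'$), gives the bound $n - d$. The precise combinatorial accounting — making the two sub-bounds telescope to exactly $n-d$ rather than something larger — is where flagness is really used: flagness guarantees that once we have crossed out of the star of $v$, no shortcut can bring us back to a facet containing $v$ without re-paying in vertices, because any such return would witness a large minimal non-face.

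The main obstacle I anticipate is precisely this bookkeeping: ensuring that the recursion ``diameter on $(n,d)$'' $\le$ ``diameter on $(n-1,d)$'' $+$ ``diameter on $(n',d-1)$'' $+\ 1$ closes up with the right constants. One must be careful that the facet chosen to exit the star shares a ridge with a facet of $C'$, that normality of $C$ really does let us reach every within-star facet from $\sigma$ (this is exactly local strong connectedness of the link of $v$), and that deleting $v$ preserves purity and normality of the remaining complex — the latter may fail for a general complex but holds here because of flagness and because the facets not containing $v$ still form a strongly connected pure complex when $C$ is normal. Managing these three simultaneous inductive hypotheses (on $n$, on $d$, and the structural hypotheses normal + flag) cleanly is the crux; everything else is the routine link/star manipulation recalled in Section~\ref{sec:clm}.
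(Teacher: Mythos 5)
Your decomposition does not close arithmetically, and the structural hypotheses you need do not survive passage to the antistar. Let me be concrete about both.

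\textbf{The bookkeeping gap.} Write $n'$ for the number of vertices in $\lk_C(v)$. Your plan bounds the distance from $\sigma$ to $\tau$ by
\[
\underbrace{\bigl(n' - (d-1)\bigr)}_{\text{within }\st_C(v)} \;+\; 1 \;+\; \underbrace{\bigl((n-1)-d\bigr)}_{\text{within }C'} \;=\; n' + n - 2d + 1.
\]
For this to be $\le n-d$ you would need $n' \le d-1$, i.e.\ the link of $v$ to be a single simplex, which is false for essentially every interesting $C$. The slack is exactly the diameter allowance you spent inside the star, and nothing in the recursion recovers it: the $C'$ half of the walk is free to re-use link vertices of $v$, so the vertices ``introduced'' inside the star and the vertices ``introduced'' in $C'$ are not disjoint sets that you can add up against the budget of $n-d$ available new vertices. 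You gesture at flagness preventing a ``return to the star,'' but the walk in $C'$ by construction never uses $v$, so re-entry is already ruled out; the problem is double-counting of the other link vertices, and flagness says nothing about that. To make a star-plus-remainder decomposition actually telescope, one must certify that the concatenated path never reintroduces a vertex it has dropped — that is precisely the \emph{non-revisiting} property, and producing a non-revisiting path is the entire content of the theorem, not a side-effect of choosing $v$ greedily.

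\textbf{The structural gap.} You need the antistar $C' = C \setminus \st_C(v)$ (facets avoiding $v$) to remain normal so that the inductive hypothesis on $n-1$ applies. But normality is not preserved under deleting a vertex: if the link of some face $S$ (with $v\notin S$) in $C$ is strongly connected only via facets through $v$, then $\lk_{C'}(S)$ disconnects. Flagness of $C$ — the condition that minimal non-faces are edges — does not repair this; it constrains which sets are faces, not how the dual graph of a link stays connected after removing a vertex. (The induced subcomplex on $[n]\setminus\{v\}$ is still flag, as you say, but that is the one hypothesis that was easy.)

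Adiprasito and Benedetti avoid both problems by a genuinely different route, reproduced in the paper as Proposition~\ref{prop:nonrevisiting} and Theorem~\ref{thm:nonrevisiting-for-flag}: first reduce to showing that any two facets are joined by a non-revisiting dual path (that alone gives the $n-d$ bound by a trivial vertex count), and then construct such a path via \emph{combinatorial segments}, an asymmetric object defined by a double recursion on dimension and on vertex-distance (Definition~\ref{defi:segment}). Flagness enters through Lemma~\ref{lemma:combinatorial-segments}: if $x$, $y$, $z$ are pairwise adjacent in the $1$-skeleton, then $xyz$ is a face, which is what lets one propagate ``vertex $z$ stays in all subsequent facets of this block'' along the segment and rule out revisits. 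Your proposal lacks any analogue of this propagation step, and the naive star/antistar split cannot supply it.
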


The proof is via \emph{non-revisiting paths}. A path $X_0,X_1,\dots, X_N$ in the dual graph of a pure complex $C$ is called \emph{non-revisiting} if its intersection with the star of every vertex is connected. Put differently, if for every $0\le i<j<k\le N$ we have $X_i\cap X_k\subset X_j$. Observe the similarity with the connectivity condition for connected layer families in Section~\ref{sec:clm}. The relation of non-revisiting paths to the Hirsch Conjecture is:

\begin{proposition}
\label{prop:nonrevisiting}
A non-revisiting path in a pure $(d-1)$-complex with $n$ vertices cannot have length greater than $n-d$.
\end{proposition}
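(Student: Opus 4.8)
The plan is to track how the facets along a non-revisiting path $X_0, X_1, \dots, X_N$ gain and lose vertices. Each step from $X_{i-1}$ to $X_i$ in the dual graph changes exactly one vertex: one element is dropped and one new element is added (since consecutive facets share a ridge and both have cardinality $d$). The non-revisiting condition says that for every vertex $v$, the set of indices $i$ with $v \in X_i$ is an interval of $\{0, 1, \dots, N\}$. Consequently, once a vertex has been dropped from the path it can never reappear, and once a vertex has been added it stays until (possibly) being dropped, never to return.

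The key bookkeeping step is then this: consider the function counting, for each $i$, the number of vertices that have been \emph{introduced} so far, i.e.\ that appear in some $X_j$ with $j \le i$. This function starts at $d$ (the $d$ vertices of $X_0$) and is nondecreasing. I claim it strictly increases at every step. Indeed, at step $i$ the facet $X_i$ acquires a new vertex $v$ and loses a vertex $w$; by the non-revisiting property $v$ did not appear in any earlier $X_j$ (otherwise its index set would not be an interval containing $i$ but missing some intermediate index), so $v$ is genuinely new, and the count of introduced vertices goes up by at least one. After $N$ steps the count is therefore at least $d + N$. But all these vertices lie among the $n$ vertices of $C$, so $d + N \le n$, i.e.\ $N \le n - d$, which is exactly the claimed bound.

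The one point that needs a little care — and is really the only obstacle — is justifying that the added vertex $v$ at step $i$ is new, which is where I would invoke the non-revisiting hypothesis in the form ``$X_a \cap X_c \subseteq X_b$ for $a < b < c$'': if $v \in X_j$ for some $j < i$ and $v \in X_i$, then $v \in X_k$ for all $j \le k \le i$, contradicting $v \notin X_{i-1}$. Everything else is elementary. Note that this argument does not actually use normality or flagness of $C$; those hypotheses are needed earlier (in Theorem~\ref{thm:hirsch-for-flag}) to guarantee that a non-revisiting path between any two facets \emph{exists}, but the length bound of Proposition~\ref{prop:nonrevisiting} holds for non-revisiting paths in any pure $(d-1)$-complex.
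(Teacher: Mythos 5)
Your argument is correct and is essentially the same as the paper's: both count vertices introduced along the path, observe that each step introduces a genuinely new vertex (by non-revisiting), and conclude $d + N \le n$. You have merely written out in more detail the justification (via the interval property $X_a \cap X_c \subseteq X_b$) that the paper leaves implicit.
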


\begin{proof}
In a dual path $X_0,X_1,\dots,X_N$, at every step from $X_i$ to $X_{i+1}$ a unique vertex is abandoned and a unique vertex is introduced. The non-revisiting condition says that no vertex can be introduced twice, and that the $d$ initial vertices in $X_0$ cannot be (abandoned and then) reintroduced. That is, the number of vertices introduced in the $N$ steps, hence the number of steps, is bounded above by $n-d$.
\qed
\end{proof}

In fact, Klee and Walkup~\cite{KleWal:dstep} showed that the Hirsch Conjecture for polytopes was equivalent to the conjecture that every pair of facets in a simplicial polytope could be joined by a non-revisiting dual path. The latter was the \emph{non-revisiting path conjecture} posed earlier~\cite{Klee:paths} and attributed to Klee and Wolfe. What Adiprasito and Benedetti prove (which implies Theorem~\ref{thm:hirsch-for-flag}) is:

\begin{theorem}[Adiprasito-Benedetti~\cite{AdiBen:flag-complexes}]
\label{thm:nonrevisiting-for-flag}
 Every pair of facets in a normal and flag pure simplicial complex can be joined by a non-revisiting dual path.
\end{theorem}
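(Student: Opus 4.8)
The plan is to prove the statement by induction on the dimension $d-1$. The base case $d=1$ is elementary: a flag normal pure $1$-complex (restricted to the connected component containing the two given facets) is a connected triangle-free graph, and a \emph{shortest} dual path between two of its edges is automatically non-revisiting — if such a path contained a vertex $v$ in two non-consecutive edges, those two edges would be adjacent in the dual graph and the path could be shortened.

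The engine of the induction is that \emph{flag normal pure complexes are closed under links}: for any vertex $v$, $\lk_C(v)$ is pure of dimension $d-2$; it is normal because $\lk_{\lk_C(v)}(S)=\lk_C(S\cup v)$; it is flag because a minimal non-face of size at least $3$ in $\lk_C(v)$ would together with $v$ give a minimal non-face of size at least $3$ in $C$; and it has strictly fewer than $n$ vertices, while, by normality of $C$ applied to the face $\{v\}$, it is strongly connected. This disposes of the case where $X$ and $Y$ share a vertex $v$: apply the inductive hypothesis inside $\lk_C(v)$ to the facets $X\setminus v$ and $Y\setminus v$, and cone the resulting non-revisiting path with $v$. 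Coning preserves the non-revisiting property, since the vertex-span of every vertex other than $v$ is unchanged and $v$ lies in every facet of the coned path, so the condition transports verbatim through $\st_C(v)=v*\lk_C(v)$.

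The heart of the matter is the disjoint case $X\cap Y=\emptyset$, where $n\ge 2d$. Here I would fix a geodesic $P\colon x=w_0,w_1,\dots,w_r=y$ in the $1$-skeleton of $C$ joining a vertex $x\in X$ to a vertex $y\in Y$, extend each edge $\{w_{i-1},w_i\}$ to a facet $F_i$ containing both endpoints (possible since $C$ is pure), and assemble the dual path as a concatenation of pieces, each confined to the closed star of one vertex of $P$: a non-revisiting path from $X$ to $F_1$ inside $\st_C(w_0)$ (both contain $w_0$, so this exists by the inductive hypothesis applied to $\lk_C(w_0)$), then for each $i$ a non-revisiting path from $F_i$ to $F_{i+1}$ inside $\st_C(w_i)$ (both contain $w_i$), and finally a non-revisiting path from $F_r$ to $Y$ inside $\st_C(y)$. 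Each individual piece is non-revisiting by construction.

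The main obstacle — and the only point where the flag hypothesis is genuinely needed — is to make the \emph{full concatenation} non-revisiting. If a vertex $u$ appears in two of the pieces, then $u$ is adjacent to or equal to the two corresponding vertices $w_i,w_j$ of $P$, which forces $|i-j|\le 2$ because $P$ is a geodesic. The dangerous configuration is a vertex $u$ adjacent to $w_i$ and to $w_{i+2}$ but neither adjacent to $w_{i+1}$ nor lying on $P$, so that $u,w_i,w_{i+1},w_{i+2}$ spans an induced $4$-cycle of $C$; such a $u$ could be used in pieces $i$ and $i+2$ while being forced to be absent from piece $i+1$, creating a revisit. To exclude this I would strengthen the inductive statement so that the path produced inside each link is required to use as few auxiliary vertices as possible — precisely, to lie in the induced subcomplex on the union of its two endpoint facets with a geodesic between them — and then choose the $F_i$ and process the pieces in their natural order so that piece $i+2$ avoids the auxiliary vertices introduced by piece $i$; flagness is exactly what keeps enough faces of the links available for this to be carried out. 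This bookkeeping is where the real work lies. Once it is in place, the non-revisiting dual path from $X$ to $Y$ so constructed has length at most $n-d$ by Proposition~\ref{prop:nonrevisiting}, which is Theorem~\ref{thm:hirsch-for-flag}.
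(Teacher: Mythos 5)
Your high-level plan — induction on dimension, reduction to the disjoint case by passing to the link of a common vertex, and then a recursion driven by vertex-distance in the $1$-skeleton — is in the same spirit as the paper's combinatorial proof. But the crucial step is not supplied, and you say so yourself: ``This bookkeeping is where the real work lies.'' That bookkeeping is precisely the content of the theorem. The concatenation-along-a-geodesic scheme as written does not yield a non-revisiting path, and the fix you gesture at (strengthen the induction so each link-path lies in the subcomplex induced on the two endpoint facets together with a geodesic, then process pieces in order so piece $i+2$ dodges the auxiliary vertices of piece $i$) is neither obviously achievable nor obviously sufficient. Note also that your analysis of overlaps considers only the configuration $|i-j|=2$; a vertex $u$ adjacent to both $w_i$ and $w_{i+1}$ but absent from the handoff facet $F_{i+1}$ would already create a revisit between consecutive pieces, and flagness does nothing directly to exclude induced $4$-cycles in the $1$-skeleton — that is not how the flag hypothesis enters.

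The paper's solution replaces the geodesic-and-stars scheme with a structurally different device: \emph{combinatorial segments} (Definition~\ref{defi:segment}). These are dual paths from a facet $X$ to a \emph{set} of vertices $S$, ``anchored'' at a single vertex $x\in X$, defined by double recursion on dimension and on $\vdist_C(X,S)$. The asymmetry matters: the anchor $x$ is required to lie in every facet of the first phase $\Gamma_1=(X_0,\dots,X_k)$, where $X_k$ is the first facet strictly closer to $S$, and $\Gamma_1$ is literally $x$ joined with a combinatorial segment in $\lk_C(x)$; then a fresh anchor $y\in X_k\setminus X_{k-1}$ is picked and the construction recurses from $X_k$. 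Flagness enters once, in Lemma~\ref{lemma:combinatorial-segments}: if a vertex $z$ adjacent to the new anchor $y$ is used anywhere in $\Gamma_1$, then $x,y,z$ pairwise span edges, hence (by flagness) a triangle, and a recursion in $\lk_C(x)$ shows $z$ persists through $X_k$. That persistence is exactly what rules out revisits across the handoff, and it is the mechanism your sketch lacks. In short: same destination and similar landscape, but you have not built the bridge; the paper builds it with a recursion keyed to a moving anchor vertex rather than to a fixed geodesic, and that choice of recursion is where the proof actually lives.
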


Adiprasito and Benedetti give two proofs of Theorem~\ref{thm:nonrevisiting-for-flag}. We will concentrate in their \emph{combinatorial proof}, but 
it is also worth sketching their \emph{geometric proof}.

\begin{proof}[Geometric proof of Theorem~\ref{thm:nonrevisiting-for-flag}, sketch]
We introduce the following metric in $C$: map each simplex to an orthant of the unit $(d-1)$-sphere, so that every edge has length $\pi/2$, every triangle is spherical with angles $\pi/2$, etcetera. Glue the metrics so obtained in adjacent facets via the unique isometry that preserves vertices. This is called  the \emph{right angled metric} on $C$. 

Gromov~\cite{Gromov:groups} proved that in the right angled metric of a flag complex the (open) star of every face is geodesically convex. 
That is, the shortest path between two points in the star is unique and stays within the star. This implies that the minimum geodesic between respective interior points in two given facets cannot revisit the star of any vertex. Hence, if the geodesic induces a dual path, this path is non-revisiting.

The reason why this is only a ``sketch'' of proof, and the reason why normality of $C$ is needed, is that such a geodesic may not induce a dual path, if it goes through faces of codimension higher than one. More critically, in some complexes \emph{every} geodesic between interior points of two given facets of may need to go through faces of codimension higher than one. (Consider for example the complex $C$ obtained as the cone over a path of length at least four. Every geodesic between a point in the first triangle and a point in the last triangle necessarily goes through the apex, hence it does not directly induce a path in the adjacency graph of $C$). To solve this issue a perturbation argument needs to be used: if the geodesic crosses a face $F$ of codimension greater than one, a short piece of the geodesic around $F$ is modified, using recursion on the link of $F$. In particular, all links need to be strongly connected; that is, the complex $C$ needs to be normal.
\qed
\end{proof}

The combinatorial proof cleverly mixes two notions of distance between facets in the complex $C$. Apart of the dual graph distance (which is our ultimate object of study) there is the \emph{vertex distance}, in the following sense:

\begin{definition}
\label{defi:vertexdistance}
Let $S$ and $T$ be two subsets of vertices of a simplicial complex $C$ (for example, but not necessarily, the vertex sets of two faces of $C$). The \emph{vertex-distance} between $S$ and $T$ in $C$, denoted $\vdist_C(S,T)$, is the minimum distance, along the graph of $C$, between a vertex of $S$ and a vertex of $T$.
\end{definition}

Of course, the vertex distance between $S$ and $T$ is zero if and only if $S\cap T\ne\emptyset$.  Adiprasito and Benedetti introduce a particular class of dual paths in $C$ that they call \emph{combinatorial segments}. The definition is subtle in (at least) two ways:
\begin{itemize}
\item It uses double recursion on the dimension of $C$ and on the vertex-distance between the end-points.
\item It is \emph{asymmetric}. Combinatorial segment do not join two facets or two vertices, but rather go \emph{form a facet $X$ to a set of vertices $S$}. Eventually, we will make $S$ to be a facet, but it is important in the proof to allow for more general sets. 
\end{itemize}

\begin{definition}
\label{defi:segment}
Let $X\in C$ be a facet and $S\subset V$ be a set of vertices in a pure and normal simplicial $(d-1)$-complex $C$ with vertex set $V$. 
Let $x\in X$. We say that a facet path $(X=X_0,X_1,\dots,X_N)$ is a \emph{combinatorial segment from $X$ to $S$ anchored at $x$} if either:
\begin{itemize}
\item $S\cap X\ne \emptyset$ (in particular, $x\in X\cap S$) and $N=0$. That is, the path is just $(X)$. (Distance zero).
\item $d=1$, $S\cap X=\emptyset$ and $N=1$, so that the path is $(X,\{v\})$ for a $v\in S$. (Dimension zero).
\item $d>1$, $S\cap X=\emptyset$ and the following holds. 
\begin{enumerate}
\item $X_N$ is the unique facet in the path intersecting $S$.
\item Let $\delta=\vdist_C(X,S)$. Let $X_k$ be the first facet in $\Gamma$ with $\vdist_C(X_k,S)<\delta$ and let $y\in X_k$ be the unique
vertex with $\vdist_C(y,S)=\vdist_C(X_k,S)=\delta-1$. (That is, let $y$ be the only element of $X_k\setminus X_{k-1}$).
Then $x\in X_0\cap X_1\cap \dots \cap X_k$ and  the link of $x$ in $\Gamma_1:=(X_0,\dots,X_k)$ is a combinatorial segment in $\lk_C(x)$
from the facet $X\setminus x$ to the set $\{z\in V: \vdist_C(z,x)=1, \vdist_C(z,S)=\delta-1\}$.
\item $\Gamma_2:=(X_k,\dots,X_N)$ is a combinatorial segment from $X_k$ to $S$ in $C$ anchored at $y$.
\end{enumerate}
\end{itemize}
\end{definition}

Some immediate consequences of this definition are:

\begin{lemma} In the conditions of Definition~\ref{defi:segment}:
\label{lemma:segment}
\begin{enumerate}
\item $\vdist_C(X,S)=\vdist_C(x,S)$. 
\item The distance $\vdist_C(X_l,S)$ is weakly decreasing along the whole path.
\item  $\forall l\in\{1,\dots,N\}$,  $(X_l,\dots,X_N)$ is a combinatorial segment from $X_l$ to $S$ (and, if $l< k$, the segment is anchored at $x$). 
\end{enumerate}
\end{lemma}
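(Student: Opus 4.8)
The goal is to establish the three items of Lemma~\ref{lemma:segment} directly from Definition~\ref{defi:segment}, which is recursive in both the dimension $d$ and the vertex-distance $\delta=\vdist_C(X,S)$. The natural plan is therefore a proof by induction on the pair $(d,\delta)$ (say, lexicographically), treating the three base cases (distance zero, dimension zero) separately and then handling the recursive case using the decomposition $\Gamma=\Gamma_1 * \Gamma_2$ into the two sub-segments $\Gamma_1=(X_0,\dots,X_k)$ in $C$ (whose $x$-link is a combinatorial segment in $\lk_C(x)$) and $\Gamma_2=(X_k,\dots,X_N)$ in $C$ anchored at $y$. I expect all three items to fall out of a careful bookkeeping of what the definition asserts about these pieces; the one point requiring genuine care is item (1), the equality $\vdist_C(X,S)=\vdist_C(x,S)$, since the anchor $x$ is where dimension-reduction happens and one must check the recursion keeps the anchor ``as close as the whole facet''.

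First I would dispose of the base cases. If $S\cap X\ne\emptyset$ then $x\in X\cap S$, so $\vdist_C(x,S)=0=\vdist_C(X,S)$, the path is the single facet $(X)$, and all three statements are vacuous or immediate. If $d=1$, then facets are single vertices; the path is $(X,\{v\})$ with $v\in S$, $X\cap S=\emptyset$, and since $x$ is the unique vertex of $X$ we again get $\vdist_C(x,S)=\vdist_C(X,S)$; monotonicity and item (3) are trivial for a two-term path. For the recursive case $d>1$, $S\cap X=\emptyset$: for item (1), the inductive hypothesis applied to the sub-segment $\Gamma_1$ in $\lk_C(x)$ (which has dimension $d-2$) tells us that the anchor of that link-segment, namely $X\setminus x$'s ``anchor vertex'', realizes the vertex-distance in the link; unwinding the definition of the target set $\{z : \vdist_C(z,x)=1,\ \vdist_C(z,S)=\delta-1\}$ together with $x\in X_0\cap\dots\cap X_k$ shows $\vdist_C(x,S)\le \delta$, while $\vdist_C(x,S)\ge\vdist_C(X,S)=\delta$ is automatic because $x\in X$. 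Hence equality.

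For item (2), monotonicity of $l\mapsto\vdist_C(X_l,S)$, I would argue piecewise. On $\Gamma_1$: by the choice of $k$ as the \emph{first} index with $\vdist_C(X_k,S)<\delta$, every facet $X_0,\dots,X_{k-1}$ has vertex-distance exactly $\delta$ to $S$, and $\vdist_C(X_k,S)=\delta-1$ (it cannot drop by more than one in a single facet-flip, since consecutive facets share all but one vertex); so along $\Gamma_1$ the value is $\delta,\delta,\dots,\delta,\delta-1$, which is weakly decreasing. On $\Gamma_2=(X_k,\dots,X_N)$: this is a combinatorial segment from $X_k$ to $S$ with $\vdist_C(X_k,S)=\delta-1<d$ if $\delta\le d$, so the inductive hypothesis (now with strictly smaller $\delta$) gives weak monotonicity there. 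Concatenating, and noting the two pieces agree at $X_k$, gives weak monotonicity on the whole path.

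Finally item (3): for $l\ge k$, the tail $(X_l,\dots,X_N)$ is a sub-tail of $\Gamma_2$, and applying item (3) of the inductive hypothesis to the combinatorial segment $\Gamma_2$ (smaller $\delta$) shows $(X_l,\dots,X_N)$ is a combinatorial segment from $X_l$ to $S$; when $l=k$ this is $\Gamma_2$ itself, anchored at $y$. For $0\le l<k$, I would verify directly that $(X_l,\dots,X_N)$ satisfies Definition~\ref{defi:segment} with the \emph{same} data: $X_N$ is still the unique facet meeting $S$ (property 1 is inherited), $\vdist_C(X_l,S)=\delta$ by item (2) just proved, the first facet in $(X_l,\dots,X_N)$ with distance $<\delta$ is still $X_k$ with the same distinguished vertex $y$, and $x\in X_l\cap\dots\cap X_k$ with the link condition holding because $(X_l,\dots,X_k)$ is a sub-tail of the combinatorial segment $\Gamma_1$'s $x$-link, again by the inductive hypothesis item (3) one dimension down; and $\Gamma_2$ is unchanged. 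So $(X_l,\dots,X_N)$ is a combinatorial segment from $X_l$ to $S$ anchored at $x$, as claimed. The main obstacle, as noted, is keeping the double recursion honest — in particular making sure that whenever we invoke the inductive hypothesis we have genuinely decreased either $d$ or $\delta$, and that the anchor and target-set data passed to the sub-segments are exactly the ones Definition~\ref{defi:segment} demands.
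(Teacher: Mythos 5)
Your overall plan — induction on the pair $(d,\vdist_C(X,S))$ with the three base cases handled separately — is exactly the structure the paper uses, and your treatments of items~(2) and~(3) are correct and in line with the paper's (which only asserts they are ``clear'' / ``trivial by induction''). However, your argument for item~(1) is unnecessarily indirect and, as written, somewhat vague. The paper proves~(1) by a one-line chain of inequalities that needs no induction at all: since $x\in X$ one has $\vdist_C(X,S)\le \vdist_C(x,S)$; since $x\in X_k$ one has $\vdist_C(x,S)\le \vdist_C(X_k,S)+1$; and by the choice of $k$, $\vdist_C(X_k,S)<\vdist_C(X,S)$, so the chain closes to give equality. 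You instead route the argument through the inductive hypothesis applied to the link-segment in $\lk_C(x)$ and then ``unwind the target set,'' but the inductive hypothesis is not doing any work there: all you actually need is that the vertex $y$ (or any $z$ in the target set reached by the link-segment) satisfies $\vdist_C(y,x)=1$ and $\vdist_C(y,S)=\delta-1$, which comes straight from Definition~\ref{defi:segment} and immediately yields $\vdist_C(x,S)\le\delta$. So the appeal to induction for~(1) is superfluous, and your formulation of what the hypothesis supposedly ``tells us'' is not really the fact being used.

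One small additional gap in item~(2): you assert that each of $X_0,\dots,X_{k-1}$ has vertex-distance \emph{exactly} $\delta$ to $S$, but you justify only the lower bound (``by the choice of $k$''). The upper bound $\vdist_C(X_i,S)\le\delta$ for $i\le k$ needs the observation that $x\in X_0\cap\dots\cap X_k$ together with item~(1), i.e.\ $\vdist_C(X_i,S)\le\vdist_C(x,S)=\delta$. You mention the containment $x\in X_0\cap\dots\cap X_k$ elsewhere, so the idea is present, but it should be invoked explicitly at this point to complete the monotonicity argument on $\Gamma_1$.
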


\begin{proof}
For (1), since $x\in X\cap X_k$ we have $\vdist(X,S) \le \vdist(x,S)\le \vdist(X_k,S) +1<\vdist(X,S)+1$. 
(2) is clear for $\Gamma_1$ and true in $\Gamma_2$ by induction on $\vdist(X,S)$. 
(3) is again trivial by induction on the dimension and the distance $\vdist(X,S)$.
\qed
\end{proof}

The existence of combinatorial segments may not be obvious, so let us prove it. In this and the following proofs when we say ``induction on the dimension and distance'' we mean that the statement is assumed for all complexes of smaller dimension, and for all complexes of the same dimension and pairs $(X',S')$ at smaller distance.

\begin{proposition}
\label{prop:segment}
Let $X$ be a facet and $S\subset V$ be a set of vertices in a normal pure simplicial complex $C$. Let $x\in X$ be with $\vdist_C(X,S)=\vdist_C(x,S)$. Then there exists a combinatorial segment from $X$ to $S$ anchored at $x$.
\end{proposition}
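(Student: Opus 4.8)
The plan is to construct the combinatorial segment by the same double recursion that appears in Definition~\ref{defi:segment}: outer induction on $\dim C = d-1$ and inner induction on $\delta := \vdist_C(X,S) = \vdist_C(x,S)$. The base cases are immediate from the definition: if $\delta = 0$ then $x \in X \cap S$ and the path $(X)$ works; if $d=1$ then a facet is a single vertex, $S \cap X = \emptyset$ forces $\delta$ to be realised by an edge of $\graph(C)$ from $x$ to some $v \in S$, and $(X,\{v\})$ is the desired segment. So assume $d > 1$ and $\delta \ge 1$, and assume the proposition holds for all complexes of smaller dimension and for all pairs at smaller vertex-distance in complexes of dimension $d-1$.

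The first step is to build the initial piece $\Gamma_1$. In the link $\lk_C(x)$, which is a pure normal $(d-2)$-complex (normality and purity of links follow from the remarks after the definition of normal complexes), consider the facet $X \setminus x$ and the target set $T := \{z \in V : \vdist_C(z,x) = 1,\ \vdist_C(z,S) = \delta - 1\}$. One checks $T$ is nonempty and that $\vdist_{\lk_C(x)}(X\setminus x, T) = 0$ would force $X$ to already contain a vertex at distance $\delta-1$ from $S$; in general one must verify that the anchoring hypothesis ``$\vdist = \vdist$'' transfers to $\lk_C(x)$, i.e.\ that there is a vertex of $X \setminus x$ realising the vertex-distance in $\lk_C(x)$ to $T$. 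Granting this, the outer induction hypothesis (dimension dropped by one) yields a combinatorial segment in $\lk_C(x)$ from $X \setminus x$ to $T$; coning with $x$ turns it into a facet path $\Gamma_1 = (X_0, \dots, X_k)$ in $\st_C(x) \subseteq C$ with $x \in \bigcap_i X_i$, and $X_k$ is the first facet along it containing a vertex $y$ with $\vdist_C(y,S) = \delta - 1$ — this is exactly condition (2) of the definition, and it also shows $\vdist_C(X_k,S) < \delta$.

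The second step is to build $\Gamma_2$ and concatenate. We have $y \in X_k$ with $\vdist_C(y,S) = \vdist_C(X_k,S) = \delta - 1 < \delta$, so the inner induction hypothesis (same complex, strictly smaller distance) gives a combinatorial segment $\Gamma_2 = (X_k, \dots, X_N)$ from $X_k$ to $S$ anchored at $y$. Setting $(X_0,\dots,X_N) := \Gamma_1 * \Gamma_2$ gives a facet path satisfying clauses (2) and (3) of Definition~\ref{defi:segment} by construction; the remaining clause (1), that $X_N$ is the \emph{unique} facet of the whole path meeting $S$, is where care is needed — it holds for $\Gamma_2$ by induction, and for $\Gamma_1$ because every facet of $\Gamma_1$ contains $x$ and $\vdist_C(x,S) = \delta \ge 1$, so no facet of $\Gamma_1$ meets $S$. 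I expect the main obstacle to be precisely the bookkeeping around this step: ensuring the target set $T$ is nonempty and that the anchor condition descends correctly to $\lk_C(x)$, since these are the places where normality of $C$ is genuinely used and where a careless choice could break the recursion. Everything else is a routine unwinding of the definition once the two recursive calls are available.
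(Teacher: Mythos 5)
Your proof is correct and takes essentially the same approach as the paper: double induction on dimension and distance, the same target set $T=\{z : \vdist_C(z,x)=1,\ \vdist_C(z,S)=\delta-1\}$ in $\lk_C(x)$, coning the resulting segment with $x$ to form $\Gamma_1$, then appending a segment $\Gamma_2$ from $X_k$ to $S$ obtained by the inner induction. The concerns you flag at the end are in fact non-issues (and the paper explicitly remarks ``we do not care about the anchor of $\Gamma'$''): $T$ is nonempty because the second vertex of any geodesic from $x$ to $S$ lies in $T$, and the anchor condition in $\lk_C(x)$ is automatically satisfiable since $\vdist_{\lk_C(x)}(X\setminus x,T)$ is by definition a minimum over the finite nonempty set $X\setminus x$ and hence attained by some vertex.
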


\begin{proof}
We use induction on the dimension and distance, the cases of dimension or distance zero being trivial.

Suppose then that $d>1$ and that $\vdist(S, X)=\delta>0$. By induction on dimension, there is a  a combinatorial segment $\Gamma'$ in $\lk_C(x)$ from the facet $X\setminus x$ to the set $\{z\in V: \vdist_C(z,x)=1, \vdist_C(z,S)=\delta-1\}$ (we do not care about the anchor of $\Gamma'$). Let $\Gamma_1=(X_0,X_1,\dots,X_k)$ be the join of $\Gamma'$ and $x$, let $y$ be the vertex in $X_k\setminus X_{k-1}$ and let $\Gamma_2$ be any combinatorial segment from $X_k$ to $S$ in $C$, which exists by induction on distance.
\qed
\end{proof}

The following property of combinatorial segments is crucial for the proof of Theorem~\ref{thm:nonrevisiting-for-flag}:

\begin{lemma}
\label{lemma:combinatorial-segments}
Let $\Gamma$ be a combinatorial segment from a facet $X$ to a set $S$ in a positive-dimensional flag and normal pure complex $C$. Let $k$, $\Gamma_1=(X,X_1,\dots,X_k)$, $x$ and $y$ be as in Definition~\ref{defi:segment}.
Then for every $z$ with $\vdist(z,y)=1$ that is used in some $X_l$ of $\Gamma_1$ we have $z\in X_l\cap X_{l+1} \cap \dots \cap X_k$.
\end{lemma}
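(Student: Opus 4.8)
The plan is to argue by induction on the dimension of $C$, exploiting the recursive structure built into Definition~\ref{defi:segment}. Fix a combinatorial segment $\Gamma$ from $X$ to $S$, and let $k$, $\Gamma_1=(X,X_1,\dots,X_k)$, $x$, $y$ be as in the definition. Recall that $\Gamma_1$ is, by construction, the join of $x$ with a combinatorial segment $\Gamma' = (X\setminus x, X_1\setminus x,\dots,X_k\setminus x)$ living in $\lk_C(x)$, which runs from the facet $X\setminus x$ to the set $T:=\{z : \vdist_C(z,x)=1,\ \vdist_C(z,S)=\delta-1\}$, where $\delta = \vdist_C(X,S)$. Note that $y$ is the unique vertex of $X_k\setminus X_{k-1}$, equivalently the unique vertex of $(X_k\setminus x)\setminus(X_{k-1}\setminus x)$, so $y\in T$ and $y$ is the final ``new'' vertex of $\Gamma'$. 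The statement we must prove — that any $z$ with $\vdist_C(z,y)=1$ appearing in some $X_l$ of $\Gamma_1$ in fact lies in $X_l\cap X_{l+1}\cap\dots\cap X_k$ — is, modulo the coning by $x$, a statement purely about the segment $\Gamma'$ in $\lk_C(x)$.

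The first step is to reduce to $\lk_C(x)$. If $z=x$ there is nothing to prove since $x\in X_l$ for all $l\le k$; so assume $z\neq x$, hence $z$ corresponds to a genuine vertex of $\lk_C(x)$ used in the face $X_l\setminus x$ of $\Gamma'$. Here I would want to use flagness: since $x,y,z$ are pairwise at vertex-distance $\le 1$ and $x\in X_k$, $y\in X_k$ — so $\{x,y\}$ is an edge — and $z$ is adjacent to both $x$ and $y$ in the graph of $C$, the triangle $\{x,y,z\}$ is a face of $C$ by the flag hypothesis, and therefore $z$ is a vertex of $\lk_C(x)$ adjacent to $y$ there (indeed $\vdist_{\lk_C(x)}(z,y)$ is governed by the graph of $\lk_C(x)$, and $\{y,z\}$ being an edge of $\st_C(x)$ it is an edge of $\lk_C(x)$). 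The upshot is that the claim for $\Gamma$ in $C$ follows from the claim for the lower-dimensional segment $\Gamma'$ in the flag normal complex $\lk_C(x)$: we need that every vertex $z$ adjacent to $y$ that is used somewhere along $\Gamma'$ first appears no later than the step at which $y$ appears, and then persists through the end $X_k\setminus x$ of $\Gamma'$.

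The second, and I expect harder, step is the base-of-the-recursion analysis inside $\Gamma'$ — i.e. understanding how the ``anchor'' mechanism forces vertices adjacent to the final new vertex $y$ to behave monotonically. The key point should be Lemma~\ref{lemma:segment}(2): vertex-distance to $S$ (equivalently, in the link picture, to the relevant target set) is weakly decreasing along the segment, and $y$ is precisely the vertex realizing the last strict decrease within $\Gamma_1$. A vertex $z$ at distance $1$ from $y$ has a controlled distance to the target, and once $\Gamma'$ has ``committed'' to $y$ it cannot revisit a vertex that would contradict the recursive anchoring at $y$ in $\Gamma_2$; running the definition of combinatorial segment one more level down (on $\lk_C(x)$, then on a further link), one sees $z$ cannot be dropped and later reintroduced, nor introduced before it is allowed. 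This is where I expect the bookkeeping to be delicate: one must carefully match the two recursion parameters (dimension and vertex-distance) so that the inductive hypothesis applies to the strictly smaller instance $\Gamma'$ in $\lk_C(x)$, and separately handle the possibility that $z$ appears already in $X_0=X$ (in which case, since $x\in X_0\cap\dots\cap X_k$ and $z$ is adjacent to $y$, one argues $z$ stays put throughout $\Gamma_1$). Once the monotone ``appears-once-and-persists'' behaviour of $y$-neighbours along $\Gamma_1$ is established at the link level, coning back up by $x$ gives the statement for $\Gamma$ and closes the induction.
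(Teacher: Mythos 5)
Your general plan matches the paper's: induction on dimension, use flagness to show $\{x,y,z\}$ is a face, pass to $\lk_C(x)$, and apply the inductive hypothesis to $\Gamma'=\lk_{\Gamma_1}(x)$. But the proposal leaves the decisive step labelled as ``bookkeeping I expect to be delicate,'' and that is precisely where the proof either closes or doesn't.

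Two things are missing. First, the clean reduction to $l=0$: by Lemma~\ref{lemma:segment}(3) the suffix $(X_l,\dots,X_N)$ is again a combinatorial segment with the same $\Gamma_1$-tail $(X_l,\dots,X_k)$ and the same $y$, so one may assume WLOG that $z\in X_0$. You mention $z\in X_0$ only as a side case to ``separately handle,'' not as the case to which everything reduces.

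Second --- and this is the real gap --- the inductive hypothesis applied to $\Gamma'$ controls $z$ only along the $\Gamma'_1$-prefix of $\Gamma'$, and a priori that prefix could stop strictly before $X'_k$, which would leave the conclusion unproved. The key observation, absent from your proposal, is that once $z\in X'_0$ one has $\vdist_{\lk_C(x)}(X'_0,T)=1$, where $T=\{v:\vdist_C(v,x)=1,\ \vdist_C(v,S)=\delta-1\}$ is the target of $\Gamma'$: the distance is $\le 1$ because $z$ is adjacent to $y\in T$, and $>0$ because every vertex of $X_0$ lies at distance $\ge\delta$ from $S$ while vertices of $T$ lie at distance $\delta-1$. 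By Definition~\ref{defi:segment} this forces $\Gamma'_1=\Gamma'$ and $\Gamma'_2=(X'_k)$, whereupon the inductive hypothesis gives $z\in X'_0\cap\dots\cap X'_k$ exactly as required. Your appeal to ``recursive anchoring at $y$ in $\Gamma_2$'' concerns the wrong half of the segment ($\Gamma_2$ plays no role in this lemma), and the proposed double induction on dimension and vertex-distance is unnecessary: plain induction on $d$ suffices once the above is in place. You also do not address the base case $d=2$, where flagness makes the hypothesis vacuous (a $z\ne x$ adjacent to both $x$ and $y$ would force the triangle $\{x,y,z\}$, impossible in a one-dimensional complex).
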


\begin{proof}
Without loss of generality (by property (3) of Lemma~\ref{lemma:segment}) we can assume $l=0$ or, put differently, $z\in X_0$.
Also, we can assume $z\ne x$ since for $z=x$ there is nothing to prove.

Observe that $xy$, $xz$ and $yz$ are edges in $C$. Since $C$ is flag, $xyz$ is a face. In particular, the lemma is trivially true (or, rather, void) for the case of dimension one ($d=2$). For the rest we assume $d\ge 3$ and use induction on $d$.

Consider the combinatorial segment $\Gamma':=\lk_{\Gamma_1}(x)=(X'_0,X'_1,\dots,X'_k)$ in $\lk_C(x)$. Since $\vdist_{\lk_C(x)}(X',y)=1$ (because $z\in X'_0$) the decomposition of $\Gamma'$ into a $\Gamma'_1$ and a $\Gamma'_2$ is just $\Gamma'_1=\Gamma'$ and 
$\Gamma'_2=(X'_k)$. Inductive hypothesis implies that $z\in X'\cap X'_1\cap\dots\cap X'_k$.
\qed
\end{proof}

\begin{corollary}
Combinatorial segments in flag normal complexes are non-revisiting.
\end{corollary}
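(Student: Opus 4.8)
The plan is to show that a combinatorial segment $\Gamma=(X=X_0,X_1,\dots,X_N)$ from a facet $X$ to a set $S$ in a flag normal complex $C$ is non-revisiting, i.e.\ that for every vertex $v$ the set of indices $l$ with $v\in X_l$ forms an interval. I would proceed by double induction on the dimension $d$ of $C$ and on the vertex-distance $\delta=\vdist_C(X,S)$, mirroring the recursive structure of Definition~\ref{defi:segment}. The base cases (dimension zero, or distance zero) are immediate since the segment has at most one edge. For the inductive step, write $\Gamma_1=(X_0,\dots,X_k)$ and $\Gamma_2=(X_k,\dots,X_N)$ with the anchor $x$ and the vertex $y\in X_k\setminus X_{k-1}$ as in the definition, so that $\Gamma_1$ is, after taking the link of $x$, a combinatorial segment in $\lk_C(x)$, and $\Gamma_2$ is a combinatorial segment from $X_k$ to $S$ anchored at $y$.

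The key step is to verify non-revisiting for an arbitrary vertex $v$ by splitting into cases according to where $v$ appears. If $v$ is used only within $\Gamma_1$ or only within $\Gamma_2$ (meaning, in the latter case, only among $X_{k+1},\dots,X_N$, plus possibly $X_k$), then the inductive hypothesis applies directly: for $\Gamma_1$ one passes to $\lk_C(x)$, where the dimension has dropped, noting that $v\in X_l$ for $l\le k$ iff $v\in x$ (a single index, trivially an interval) or $v\ne x$ and $v\setminus x$ lies in the corresponding facet of $\lk_{\Gamma_1}(x)$; for $\Gamma_2$ one uses the induction on distance, since $\vdist_C(X_k,S)=\delta-1<\delta$. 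The remaining and crucial case is a vertex $v$ used in both halves, i.e.\ in some $X_l$ with $l<k$ and in some $X_m$ with $m>k$. Here I would argue that such a $v$ must satisfy $\vdist_C(v,y)\le 1$: indeed its presence in $\Gamma_2$, combined with Lemma~\ref{lemma:segment}(2) (vertex-distance to $S$ is weakly decreasing) and the fact that $\Gamma_2$ is anchored at $y$ with $\vdist_C(y,S)=\delta-1$, forces $v$ to be ``close'' to $y$; the flag condition then makes $xy$, $xv$, $yv$ (or their degenerate versions) into edges of a common face. At that point Lemma~\ref{lemma:combinatorial-segments} applies: any $z=v$ with $\vdist(v,y)=1$ used in some $X_l$ of $\Gamma_1$ actually lies in $X_l\cap X_{l+1}\cap\dots\cap X_k$, so $v$ appears in a terminal interval of $\Gamma_1$ ending at $X_k$. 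Combined with the interval property of $v$ within $\Gamma_2$ (inductive hypothesis) and the shared facet $X_k$, the two intervals glue into a single interval over all of $\Gamma$.

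The main obstacle I anticipate is precisely the bookkeeping in that last case: one must be careful that ``used in $\Gamma_2$'' is correctly interpreted (the facet $X_k$ belongs to both $\Gamma_1$ and $\Gamma_2$, so a vertex present in $X_k$ and nowhere else in $\Gamma_2$ should not be mistaken for a genuine revisit), and one must pin down exactly why a vertex appearing on both sides of $X_k$ is forced to be within vertex-distance $1$ of $y$ rather than merely of $S$. This is where the anchoring discipline in Definition~\ref{defi:segment} and property (3) of Lemma~\ref{lemma:segment} do the real work, reducing the general situation to one where the relevant vertex lies in $X_0$. Once Lemma~\ref{lemma:combinatorial-segments} is invoked with $z=v$, the conclusion is essentially forced; so the proof is short modulo a careful case analysis, and I would present it as: ``Induction on dimension and distance; the only nontrivial case is a vertex $v$ used on both sides of $X_k$, which by Lemma~\ref{lemma:combinatorial-segments} (after checking $\vdist(v,y)\le 1$) lies in $X_l\cap\dots\cap X_k$, and we conclude by the inductive hypothesis on $\Gamma_2$.''
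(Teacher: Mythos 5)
Your high-level structure — induction on dimension and distance, reduction to the case of a vertex $v$ that appears on both sides of $X_k$, and an appeal to Lemma~\ref{lemma:combinatorial-segments} once $\vdist_C(v,y)=1$ is established — is exactly the paper's approach, and the bookkeeping you describe for the easy cases is sound. The genuine gap is in the step you yourself flag as the crux: asserting that $v$ must satisfy $\vdist_C(v,y)\le 1$ because of ``Lemma~\ref{lemma:segment}(2) and the anchoring at $y$'' is not yet an argument. Weak monotonicity of vertex-distance to $S$ controls $\vdist_C(\cdot,S)$, not $\vdist_C(\cdot,y)$, and anchoring at $y$ only tells you about the first block of $\Gamma_2$, so the inference does not close. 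Also, the reduction ``to the case $v\in X_0$ via Lemma~\ref{lemma:segment}(3)'' belongs inside the proof of Lemma~\ref{lemma:combinatorial-segments}; in the corollary you should use that lemma as a black box, not redo its reduction.

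What actually closes the gap is a further unwinding of $\Gamma_2$'s recursive structure, together with an exact computation of $\vdist_C(v,S)$. Since $v$ lies in some $X_l$ with $l<k$ (all such facets have vertex-distance exactly $\delta$ to $S$, so $\vdist_C(v,S)\ge\delta$) and also in some $X_m$ with $m>k$ (a facet at vertex-distance $\le\delta-1$, whose closest-to-$S$ vertex is adjacent to $v$ inside $X_m$, so $\vdist_C(v,S)\le\delta$), one gets $\vdist_C(v,S)=\delta$. Now decompose $\Gamma_2=\Gamma'_1\cdot\Gamma'_2$ as in Definition~\ref{defi:segment}: every facet of $\Gamma'_1$ contains the anchor $y$, while every facet of $\Gamma'_2$ is at vertex-distance $<\delta-1$ from $S$. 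The latter facets cannot contain $v$: such a facet has a vertex $w$ with $\vdist_C(w,S)<\delta-1$, and $w$ adjacent to $v$ would force $\vdist_C(v,S)<\delta$. Hence $v$ appears in $\Gamma_2$ only inside $\Gamma'_1$, i.e.\ in a facet containing $y$, giving $\vdist_C(v,y)\le 1$; if $v=y$ then $v\in X_k$ trivially, and if $\vdist_C(v,y)=1$ then Lemma~\ref{lemma:combinatorial-segments} puts $v$ in $X_l\cap\dots\cap X_k$, so the two intervals of occurrence of $v$ meet at $X_k$ and form a single interval. With this step made explicit your proof matches the paper's.
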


\begin{proof}
We prove this by induction on dimension and distance, the cases of dimension or distance zero being trivial.

In case $\vdist_C(X,S)\ge 0$ and $d>1$, let $\Gamma_1=(X=X_0,\dots,X_k)$, $\Gamma_2$, $x$ and $y$ be
as in Definition~\ref{defi:segment}. Since $\Gamma_1$ and $\Gamma_2$ are non-revisiting by inductive hypothesis, the only thing to prove is that it is not possible for a vertex $z$ used in $\Gamma_1$ and $\Gamma_2$ not to be in $X_k$. 

Let $\delta=\vdist(X,v)$. Observe that $\vdist(z,S)=\delta$, since $z$ belongs both to facets of $\Gamma_1$ (which are at distance $\delta$ from $S$) and of $\Gamma_2$ (at distance less than $\delta$ from $S$). The proof is now by induction on $\delta$:
\begin{itemize}
\item If $\delta=1$ then $\Gamma_2=(X_k)$ and there is nothing to prove.
\item If $\delta>1$, decompose $\Gamma_2$ as the concatenation of a $\Gamma'_1$ and a $\Gamma'_2$ as in Definition~\ref{defi:segment}. Since $\Gamma_2$ is anchored at $y$,  $\Gamma'_1$ is contained in the star of $y$. Now, $\Gamma'_2$ contains only facets at distance less than $\delta-1$ to $S$ so, in particular, it does not contain $z$. That implies $\vdist_C(z,y)=1$ and, by Lemma~\ref{lemma:combinatorial-segments}, $z\in X_k$.
\end{itemize}
\qed
\end{proof}

\begin{proof}[Combinatorial proof of Theorem~\ref{thm:nonrevisiting-for-flag}]
Let $X$ and $Y$ be two disjoint facets in $C$. (For non-disjoint facets, use induction in the link of a common vertex).

Let $\Gamma=(X,\dots,X_N)$ be a combinatorial segment from $X$ to the vertex set $Y$,
and let $v\in X_N\cap Y$. By induction on the dimension, consider a non-revisiting path $\Gamma'$ from $X_N$ to $Y$ in the star of $v$.
We claim that the concatenation of $\Gamma$ and $\Gamma'$ is non-revisiting. Since both parts are non-revisiting ($\Gamma$ by Lemma~\ref{lemma:combinatorial-segments}), the only thing to prove is that it is not possible for a vertex $z$ used in $\Gamma$ and $\Gamma'$ not to be in $X_N$. 

Such a $z$ must be at distance 1 from $v$ (because $\Gamma'$ is contained in the star of $v$),
so the first facet of $\Gamma$ containing $z$ is at distance 1 from $v$. By property (3) of Lemma~\ref{lemma:segment} there is no loss of generality in assuming that $z\in X$ and that $\vdist(X,Y)=1$. In this case, $\Gamma$ coincides with the path $\Gamma_1$ of Definition~\ref{defi:segment} and $y$ is the vertex in $X_N\setminus X_{N-1}$. By Lemma~\ref{lemma:combinatorial-segments}, $z$ is in $X_N$.
\qed
\end{proof}

\subsection{Highly non-decomposable polyhedra do exist}
\label{sec:decomposable}

In 1980, Provan and Billera~\cite{ProBil:decomposable} introduced the following concepts for simplicial complexes, and proved the following results:

\begin{definition}[\protect{\cite[Definition 2.1]{ProBil:decomposable}}]
\label{defi:decomposable}
Let $C$ be a pure $(d -1)$-dimensional simplicial complex and let $0 \le k \le d - 1$. We say that $C$ is \emph{$k$-decomposable} if either
\begin{enumerate}
\item $C$ is a $(d - 1)$-simplex, or 
\item there exists a face $S\in C$ (called a \emph{shedding face}) with $\dim(S) \le k$
such that
\begin{enumerate}
\item $C\setminus S$ is $(d - 1)$-dimensional and $k$-decomposable, and
\item $\lk_C(S)$ is $(d - |S| - 1)$-dimensional and $k$-decomposable.
\end{enumerate}
\end{enumerate}
\end{definition}

\begin{definition}[\protect{\cite[Definition 4.2.1]{ProBil:decomposable}}]
\label{defi:w-decomposable}
Let $C$ be a pure $(d -1)$-dimensional simplicial complex and let $0 \le k \le d - 1$. We say that $C$ is \emph{weakly $k$-decomposable} if either
\begin{enumerate}
\item $C$ is a $(d - 1)$-simplex, or 
\item there exists a face $S\in C$ (called a \emph{shedding face}) with $\dim(S) \le k$
such that
$C\setminus S$ is $(d - 1)$-dimensional and weakly $k$-decomposable
\end{enumerate}
\end{definition}

In these statements, $C\setminus S$ denotes the simplicial complex obtained removing from $C$ all the facets that contain $S$. This is called the \emph{deletion} or the \emph{antistar} of $S$ in $C$. The main motivation of Provan and Billera, as the title of their paper indicates, is to relate decomposability to the diameter of the adjacency graph of the complex:

\begin{theorem}
\label{thm:provanbillera}
Let $C$ be a pure $(d-1)$-dimensional complex. Let $f_k(C)$ denote the number of faces of dimension $k$ in $C$, for $k=0,\dots,d-1$. Let $\diam(C)$ denote the diameter of the adjacency graph of $C$. Then:
\begin{enumerate}
\item If $C$ is $k$-decomposable then $\diam(C) \le f_k(C) - \binom{d}{k+1}$. 
\item If $C$ is weakly $k$-decomposable then $\diam(C) \le 2f_k(C)$.
\end{enumerate}
\end{theorem}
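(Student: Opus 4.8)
The plan is to prove both parts simultaneously by induction on $d$, following the recursive structure of the definitions of $k$-decomposability and weak $k$-decomposability. The base case is when $C$ is a $(d-1)$-simplex: then its adjacency graph is a single vertex, so $\diam(C)=0$, and we check that $f_k(C)-\binom{d}{k+1}=\binom{d}{k+1}-\binom{d}{k+1}=0$ for part (1), while $2f_k(C)\ge 0$ for part (2). So both bounds hold trivially.

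For the inductive step, suppose $C$ is not a simplex and let $S$ be a shedding face with $\dim(S)\le k$. The key geometric observation is how the dual graph of $C$ decomposes: the facets of $C$ split into those containing $S$ (these are exactly the facets of $\st_C(S)$, which correspond bijectively to facets of $\lk_C(S)$ via removing $S$) and those not containing $S$ (these are the facets of $C\setminus S$). Moreover, every facet containing $S$ shares a ridge with some facet not containing $S$, and $\st_C(S)$ is strongly connected (its dual graph is a join, hence connected). So any dual path in $C$ can be routed as follows: first move, within $\st_C(S)$, to a facet adjacent to $C\setminus S$; then traverse within $C\setminus S$; the analogous statement handles the other endpoint. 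This yields an inequality of the shape
\[
\diam(C)\ \le\ \diam(C\setminus S)\ +\ \diam(\lk_C(S))\ +\ c
\]
for a small constant $c$ (accounting for the transition steps). For part (1) I would then apply the inductive hypothesis to $C\setminus S$ (still $(d-1)$-dimensional, $k$-decomposable) and to $\lk_C(S)$ (dimension $d-|S|-1\le d-1$, $k$-decomposable), and use the facet-count bookkeeping: $f_k(C\setminus S)+f_k(\lk_C(S))\le f_k(C)$ since the $k$-faces of $\lk_C(S)$ correspond to $k$-faces of $C$ that become $k$-faces only after deleting $S$-containing facets — one has to track carefully which $k$-faces are counted where, and the binomial correction terms $\binom{d}{k+1}$ and $\binom{d-|S|}{k+1}$ must combine to give exactly the claimed $-\binom{d}{k+1}$. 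For part (2), weak decomposability only recurses into $C\setminus S$, so one instead bounds the ``star part'' crudely: $\diam(\st_C(S))$ is at most the number of facets of $\st_C(S)$, which is controlled by $f_k$ restricted to faces containing $S$, and $2f_k(C\setminus S)+2(\text{that count})\le 2f_k(C)$.

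The main obstacle I expect is the exact combinatorial accounting of $k$-faces in part (1) — specifically, verifying that the three quantities $f_k(C\setminus S)$, $f_k(\lk_C(S))$, and the number of ``transition'' facets used at the two endpoints fit together to reproduce the clean bound $f_k(C)-\binom{d}{k+1}$ with no slack, since this is what makes the Hirsch-type estimate $n^k$ come out. A subtle point is that a $k$-face of $C$ containing the shedding face $S$ (when $\dim S=k$, i.e. $S$ itself) is lost from $C\setminus S$ but its "contribution" has to be recovered either from $\lk_C(S)$ or absorbed into the constant; getting the boundary bookkeeping to cancel exactly, rather than merely up to a constant, is the delicate step. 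The routing argument for the dual graph and the induction skeleton are routine; the arithmetic of binomial coefficients and face counts is where care is needed.
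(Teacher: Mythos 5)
The paper states this theorem as a cited result of Provan and Billera \cite{ProBil:decomposable} without reproducing a proof, so there is no in-paper argument to compare to; I evaluate your sketch on its own merits, and there are two genuine gaps. First, the routing recursion $\diam(C) \le \diam(C\setminus S) + \diam(\lk_C(S)) + c$ is too lossy to yield part~(1). Test it at $k=0$ with a shedding vertex $v$: the inductive hypotheses give $\diam(C\setminus v)\le (n-1)-d$ and $\diam(\lk_C(v))\le (n-1)-(d-1)$, which sum to roughly $2(n-d)$ --- twice the Hirsch bound. This is not a matter of ``delicate bookkeeping''; no choice of binomial correction repairs a factor-of-two overshoot, because the recursion itself is the wrong shape. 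Second, your face-count inequality $f_k(C\setminus S)+f_k(\lk_C(S))\le f_k(C)$ is dimensionally inconsistent: a $j$-face of $\lk_C(S)$ corresponds to a $(j+|S|)$-face of $C$ containing $S$, not a $j$-face of $C$, so the two summands on the left are not counting objects of the same dimension inside $C$ and cannot be compared to $f_k(C)$ by simple addition.

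The device that actually produces the clean bound in part~(1) is a non-revisiting argument, not an additive diameter recursion. Provan and Billera show, by induction on the shedding structure (this is where both $C\setminus S$ and $\lk_C(S)$ being $k$-decomposable are used), that between any two facets of a $k$-decomposable complex there is a dual path along which every $k$-face appears in a contiguous interval of facets. Along such a path, each pivot step permanently discards at least one $k$-face that never reappears, while the $\binom{d}{k+1}$ $k$-faces of the terminal facet are never discarded, so the length is at most $f_k(C)-\binom{d}{k+1}$. For part~(2), there is a further problem with your sketch: weak $k$-decomposability imposes no condition at all on $\lk_C(S)$, so your assertion that $\st_C(S)$ is strongly connected (which is equivalent to $\lk_C(S)$ being strongly connected) is unjustified, and the ``crude'' bound $\diam(\st_C(S))\le$ (number of facets of the star) is meaningless when that dual subgraph is disconnected. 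The weak case in \cite{ProBil:decomposable} is handled by a correspondingly different path argument, not by the star-then-deletion routing you propose.
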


Observe that for $k=d-1$ definitions~\ref{defi:decomposable} and~\ref{defi:w-decomposable} are equivalent (since the link condition becomes void). In fact, $(d-1)$-decomposable complexes of dimension $d-1$ are the same as \emph{shellable} complexes, and include all the face complexes of simplicial $d$-polytopes. In this sense the concept of $k$-decomposability, for varying $k$, interpolates between the face complexes of all polytopes and those of $0$-decomposable (or \emph{vertex decomposable}) ones, which satisfy the Hirsch bound, by part (1) of Theorem~\ref{thm:provanbillera}.

Simplicial polytopes with non-vertex-decomposable boundary complexes were soon found. Klee and Kleinschmidt in their 1987 survey on the Hirsch conjecture observe that a certain polytope constructed by E.~R.~Lockeberg is not vertex-decomposable~\cite[p.~742]{KleKle:dstep}. But the question remained open whether the same happens for weakly vertex-decomposable, or for $k$-decomposable with higher $k$. The two questions have been solved recently, and with surprisingly simple (in every possible sense of the word) polytopes.

Remember that the $k$-th hypersimplex of dimension $d$ is the intersection of the standard $(d+1)$-cube with the hyperplane $\{\sum x_i=k\}$, for an integer $k\in\{1,\dots,d\}$~\cite{DeRaSa:triang_book,Ziegler:lectures-on-polytopes}. We generalize the definition as follows:

\begin{definition}
\label{defi:hypersimplex}
Let $a$ and $b$ be two positive integers. We call fractional hypersimplex of parameters $(a,b)$ and dimension $a+b$ the intersection
of the standard $(a+b+1)$-cube $[0,1]^{a+b+1}$ with the hyperplane $\{\sum x_i = a+0.5\}$. We denote it $\Delta_{a,b}$.
\end{definition}

Let us list without proof several easy properties of fractional hypersimplices:
\begin{enumerate}
\item $\Delta_{a,b}$ is combinatorially equivalent to $\Delta_{b,a}$, and to the Minkowski sum of the $a$-th and $a+1$-th hypersimplices of dimension $a+b$.
\item $\Delta_{a,b}$ is the $2\times (a+b+1)$ transportation polytope obtained with margins $(a+0.5,b+0.5)$ in the rows and $(1,\dots,1)$ in the columns.
\item $\Delta_{a,b}$ is simple and it has $2a+2b+2$ facets (one for each facet of the $(a+b+1)$-cube.
\item By the previous property, a subset of facets of $\Delta_{a,b}$ can be labeled as a pair $(S,T)$ of subsets of $[a+b+1]$, with an element $i\in S$ representing the facet $\{x_i=0\}$ and an element $j\in T$ representing the facet $\{x_j=1\}$. Then the vertices of $\Delta_{a,b}$ correspond exactly to the pairs $(S,T)$ with $S\cap T=\emptyset$, $|S|=a$, and $|T|=b$.
\item In particular, $\Delta_{a,b}$ has exactly $(a+b+1)\binom{a+b}{a}$ vertices.
\end{enumerate}

The main results concerning decomposability of $\Delta_{a,b}$ are:

\begin{theorem}
\label{thm:indecomposable}
Let $\nabla_{a,b}$ denote the polar of the fractional hypersimplex $\Delta_{a,b}$.
\begin{enumerate}
\item \emph{(De Loera and Klee~\cite{DelKle:decomposable}).}
$\nabla_{a,b}$ is not weakly vertex-decomposable for any $a,b\ge 2$. In particular, $\nabla_{2,2}$ is a non-weakly-vertex decomposable simplicial $4$-polytope with $10$ vertices and $30$ facets.  
\item \emph{(H\"ahnle, Klee and Pilaud~\cite{HaKlPi:decomposable}).}
$\nabla_{a,b}$ is not weakly $k$-decomposable for any $k\le \sqrt{2\min(a,b)} -3$. In particular, for every $k$ there is a non-weakly-$k$-decomposable polytope of dimension $2\left\lceil(k+3)^2/4\right\rceil$ with $(k+3)^2+2$ vertices. 
\end{enumerate}
\end{theorem}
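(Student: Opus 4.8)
The plan is to prove both parts by the same mechanism: exhibit an explicit obstruction to the existence of a shedding face in $\nabla_{a,b}$, working directly with the combinatorial description of the vertices of $\Delta_{a,b}$ (equivalently, the facets of $\nabla_{a,b}$) as pairs $(S,T)$ with $S\cap T=\emptyset$, $|S|=a$, $|T|=b$, $S,T\subset[a+b+1]$. Recall that weak $k$-decomposability requires a face $\sigma$ of $\nabla_{a,b}$ of dimension $\le k$ (i.e.\ a set of at most $k+1$ vertices of $\Delta_{a,b}$ forming a face of $\nabla_{a,b}$) whose deletion is still $(d-1)$-dimensional and weakly $k$-decomposable; recursion bottoms out at a simplex. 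Since $\nabla_{a,b}$ is polar to a simple polytope, a set of vertices of $\Delta_{a,b}$ is a face of $\nabla_{a,b}$ exactly when the corresponding facets of $\Delta_{a,b}$ share a common vertex of $\Delta_{a,b}$; so ``small face of $\nabla_{a,b}$'' translates into ``small collection of facets of the cube-slice meeting at a vertex,'' which in the $(S,T)$ language is a very rigid condition. The first step is therefore to translate shedding faces and deletions entirely into this bipartite set-pair language, so that the whole argument becomes hypergraph combinatorics on $[a+b+1]$.

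For part~(1), I would argue that \emph{no} vertex $v=(S,T)$ of $\nabla_{a,b}$ can serve as a shedding vertex, and that moreover after deleting any single vertex the resulting complex is already ``as non-decomposable as the original,'' so the recursion can never terminate except by reaching a simplex, which is impossible because $\nabla_{a,b}$ is not a simplex for $a,b\ge2$. Concretely: deleting the star of a vertex $v$ of $\nabla_{a,b}$ removes one facet-figure of $\Delta_{a,b}$; one shows that the antistar still contains, for every pair of ``antipodal'' directions, enough structure that a potential shedding face in the antistar would have to be a shedding face of the whole thing, giving an infinite descent. The cleanest route is probably a direct diameter or connectivity estimate: by Theorem~\ref{thm:provanbillera}(2), weak $k$-decomposability of a complex $C$ forces $\diam(C)\le 2 f_k(C)$; I would instead find a combinatorial invariant (most naturally, the existence in every antistar of a vertex $v$ all of whose neighbors survive, forcing $v$ itself to have been needed as a shedding face earlier) that cannot decrease under deletion. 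Since $a,b\ge2$ guarantees the polytope is genuinely $4$-dimensional (or higher) with more vertices than a simplex, the descent cannot reach a simplex, contradiction. The explicit case $\nabla_{2,2}$ ($d=4$, $10$ vertices, $30$ facets) is then just the instance $a=b=2$, using $|[a+b+1]|=5$, $(a+b+1)\binom{a+b}{a}=5\cdot6=30$ vertices of $\Delta_{2,2}$ hence $30$ facets of $\nabla_{2,2}$, and $2a+2b+2=10$ facets of $\Delta_{2,2}$ hence $10$ vertices of $\nabla_{2,2}$.

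For part~(2) the same scheme is pushed one parameter up: instead of forbidding $0$-dimensional shedding faces one forbids shedding faces of dimension $\le k$, i.e.\ faces of $\nabla_{a,b}$ with at most $k+1$ vertices. Such a face corresponds to $\le k+1$ pairs $(S_i,T_i)$ whose associated cube-facets have a common vertex of $\Delta_{a,b}$; the key quantitative lemma is that such a face ``uses up'' only $O(k)$ of the $a+b+1$ coordinates, so as long as $\min(a,b)$ is large compared to $k^2$ there remains an untouched sub-hypersimplex $\Delta_{a',b'}$ inside the antistar that replays the obstruction of part~(1); solving for when this survives gives the bound $k\le\sqrt{2\min(a,b)}-3$. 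The dimension and vertex count in the ``in particular'' clause come from choosing $a=b=\lceil(k+3)^2/4\rceil$, so that $\dim\nabla_{a,b}=a+b=2\lceil(k+3)^2/4\rceil$ and the number of vertices of $\nabla_{a,b}$ equals the number of facets of $\Delta_{a,b}$, namely $2a+2b+2=(k+3)^2+2$ after absorbing the ceiling. The main obstacle, and the step I would spend the most care on, is exactly this counting lemma: bounding how much of the ground set $[a+b+1]$ a low-dimensional shedding face can ``control'' after iterated deletions, and verifying that the leftover coordinates still carry a full-dimensional fractional hypersimplex on which the part~(1) argument applies verbatim. Everything else — the translation into set-pairs, the base case, and plugging numbers into the formulas for faces of $\Delta_{a,b}$ — is routine given the properties listed before Definition~\ref{defi:hypersimplex}.
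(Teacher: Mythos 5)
Your high-level translation of the problem into the $(S,T)$ set-pair language is correct and matches the paper, but the core of your argument for part~(1) is vague at exactly the point where the actual proof needs to be concrete, and the mechanism you propose is different from (and weaker than) what actually works.

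You assert that ``no vertex can serve as a shedding vertex,'' propose to show the antistar is ``as non-decomposable as the original,'' and invoke an infinite descent terminating only at a simplex. This is circular as stated: to prove $v$ is not a shedding vertex you must show $\nabla_{a,b}\setminus v$ is not weakly $0$-decomposable, which is the same kind of claim you started with, and you give no monovariant or base case that actually makes the descent terminate in a contradiction. The paper's proof avoids this entirely by identifying a much sharper obstruction that you never mention: \emph{purity}. Definition~\ref{defi:w-decomposable} requires $C\setminus S$ to be a pure $(d-1)$-dimensional complex at every step of the recursion, and De Loera--Klee show that for \emph{any} choice of the first two or three shedding vertices $i_1,i_2,i_3$, already $(\nabla_{a,b}\setminus i_1)\setminus i_2$ or $((\nabla_{a,b}\setminus i_1)\setminus i_2)\setminus i_3$ fails to be pure. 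Concretely: after a WLOG reduction to the cases $\{i_1,i_2\}=\{+1,+2\}$ or $\{i_1,i_2,i_3\}\in\{\{+1,+2,-1\},\{+1,+2,-3\}\}$, the resulting complex still contains a full-dimensional facet (so it is $(d-1)$-dimensional) yet contains the ridge
\[
\{+3,\dots,+(a+1),-(a+2),\dots,-(a+b+1)\}
\]
without containing either of the two facets $\{+1,+3,\dots\}$ or $\{+2,+3,\dots\}$ that extend it, so it is not pure. This kills the shedding sequence after at most three steps; no infinite descent, no auxiliary invariant, and no appeal to diameter bounds (your Theorem~\ref{thm:provanbillera}(2) alternative would require exhibiting a super-linear diameter for $\nabla_{a,b}$, which is not available). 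Your claim that ``no vertex can serve as a shedding vertex'' is also too strong in spirit: the first deletion is unproblematic, and the obstruction only surfaces two or three levels down.

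For part~(2) your sketch — that a dimension-$\le k$ face touches only $O(k)$ coordinates and leaves a full sub-hypersimplex — is a plausible shape for the argument, but since you have not fixed the part~(1) mechanism it inherits the same gap, and the paper itself only says the details are ``trickier'' without giving them. The ``in particular'' arithmetic for $\nabla_{2,2}$ and for $a=b=\lceil(k+3)^2/4\rceil$ is fine.
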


\begin{proof}
We only give the proof of part (1). Part (2) follows similar ideas except the details are trickier.

What De Loera and Klee show is that there cannot be a \emph{shedding sequence} $(i_1,i_2,i_3,\dots)$ simply because either 
$(\nabla_{a,b}\setminus i_1) \setminus i_2$ or $((\nabla_{a,b}\setminus i_1) \setminus i_2)\setminus i_3$ will not be pure, no matter who the vertices $i_1$, $i_2$ and $i_3$ are. 

Remember that each vertex of $\nabla_{a,b}$ (that is, each facet of $\Delta_{a,b}$) corresponds to a facet $\{x_i=0\}$ or $\{x_i=1\}$ of the cube, with $i\in[a+b+1]$. In what follows we label the vertex corresponding to $\{x_i=0\}$ as $+i$ and the vertex corresponding to $\{x_i=1\}$ as $-i$, so that the vertex set of $\nabla_{a,b}$ is $\{\pm1,\pm 2,\dots,\pm(a+b+1)\}$. 
Without loss of generality assume that at least two of the first three vertices $i_1$, $i_2$ and $i_3$ in the shedding sequence are of the ``$+$'' form. There are then two cases:
\begin{itemize}
\item If $i_1$ and $i_2$ are both of the ``$+$'' form, assume without loss of generality that $\{i_1,i_2\}=\{+1,+2\}$. Let $C= (\nabla_{a,b}\setminus i_1) \setminus i_2$.
\item If not, assume without loss of generality that $\{i_1,i_2,i_3\}=\{+1,+2,-1\}$ or  $\{i_1,i_2,i_3\}=\{+1,+2,-3\}$. Let $C= ((\nabla_{a,b}\setminus i_1) \setminus i_2)\setminus i_3$.
\end{itemize}
In all cases $C$ is full-dimensional, since it contains (for example) one of the facets
\[
\{-2,-3,\dots,-(b+1),+(b+2),\dots,+(a+b+1)\}
\]
or
\[
\{-1,-2,-4,\dots,-(b+1),+(b+2),\dots,+(a+b+1)\}.
\]
But $C$ is not pure, since it does not contain any of the following two facets of $\nabla_{a,b}$ but, still, it contains their common ridge:
\begin{eqnarray*}
\{+1,+3,\dots,+(a+1), -(a+2),\dots,-(a+b+1)\}\\
\{+2,+3,\dots,+(a+1), -(a+2),\dots,-(a+b+1)\}.
\end{eqnarray*}
\qed
\end{proof}

\subsection{Polynomial diameter of polyhedra with bounded coefficients}
\label{sec:subdeterminants}

Although the main motivation for the Hirsch question is in the world of true, geometric polyhedra, in most of the paper we have been using combinatorial or topological ideas, even when the results mentioned were specific to realized (or realizable) objects, as was the case in Section~\ref{sec:smalln}. But we finish with an intrinsically geometric issue, the role of the size of coefficients in the diameter of a polyhedron or polytope.
The motivation for studying this is two-fold:
\begin{itemize}
\item Since we cannot find a polynomial upper bound for the diameters of polyhedra in terms of $n$ and $d$ alone, it may be interesting to understand whether we can do it in terms of $n$, $d$ and the size of the coefficients, where ``size'' should be understood as ``bit-length'' (or number of digits). Such a bound would be a step towards a (non-strongly) polynomial-time simplex method. 
\item Perhaps that is too optimistic; but bounding the diameter in terms of the size of coefficients will at least give polynomial upper bounds for the diameters of particular classes of polytopes and polyhedra. 
As a classical example of this, in 1994 Dyer and Frieze~\cite{DyeFri:random-walks} gave a polynomial bound on the diameter of polyhedra whose defining matrix is \emph{totally unimodular}, a case that includes, for example, all network flow polytopes:


\begin{theorem}[Dyer and Frieze~\cite{DyeFri:random-walks}]
Let $A$ be a totally unimodular $n \times d$ matrix and $c$ a vector in $\R^n$.  Then the diameter of the polyhedron
$P = \{{\bf x} \in \R^d : A{\bf x} \leq c \}$
is $O(d^{16}n^3(\log(dn))^3)$.
\end{theorem}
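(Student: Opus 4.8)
The plan is to bound the diameter by controlling how far apart two vertices of $P$ can be using a "shadow vertex" or random-walk argument combined with a geometric estimate on how the normal cones of vertices are distributed when the constraint matrix $A$ is totally unimodular. First I would recall the standard reduction: since total unimodularity is preserved under the operations we need (deleting rows, taking sub-polyhedra of faces), and since every vertex of $P = \{\mathbf{x}\in\R^d : A\mathbf{x}\le c\}$ is determined by a nonsingular $d\times d$ submatrix of $A$ with determinant $\pm 1$, the vertices of $P$ all lie on a lattice-like structure: each vertex is the unique solution of $A_B\mathbf{x} = c_B$ for a basis $B$, so by Cramer's rule its coordinates have denominators dividing $\det A_B = \pm 1$, i.e. the vertices live in a translate of a bounded-index refinement of $\Z^d$ determined by $c$. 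This integrality is the workhorse.

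Next I would set up the random-walk / expected-distance framework of Dyer and Frieze. One picks two vertices $u,v$ and considers the straight-line functional $\phi = c'\cdot\mathbf{x}$ for a generic $c'$; the shadow (projection of $P$ onto the plane spanned by this functional and one separating the two vertices) is a polygon whose vertices correspond to a monotone path in $\graph(P)$ from $u$ to $v$. The key quantitative step is to bound the expected number of edges of this shadow polygon. This is where total unimodularity enters a second time: the edge directions of $P$ are (up to scaling) differences of the lattice vertices, and total unimodularity forces all the relevant determinants — hence all "slopes" appearing in the shadow — to be ratios of small integers (bounded in terms of $d$ alone, since $d\times d$ subdeterminants are in $\{-1,0,1\}$, and $(d{-}1)\times(d{-}1)$ minors likewise). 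A counting/volume argument then shows that the number of distinct edge slopes, and with more care the number of edges, of the shadow is polynomially bounded in $n$ and $d$. Averaging over the random choice of projection plane (or over a random perturbation of $c'$) converts "expected length of a monotone path" into a worst-case diameter bound, at the cost of extra polynomial factors and the $(\log(dn))^3$ term, which comes from the probabilistic tail estimates controlling how often a monotone path can be long.

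I would then assemble the pieces: (i) reduce to counting edges in a two-dimensional shadow; (ii) use total unimodularity to bound the arithmetic complexity of vertices and edge directions; (iii) bound the number of shadow edges by a volume/lattice-point count, getting something like $O(d^{?}n^{?})$; (iv) use a probabilistic averaging over perturbations to pass from one monotone path to a diameter bound, introducing the polylogarithmic factor; (v) track the exponents to land on $O(d^{16}n^3(\log(dn))^3)$.

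\textbf{Main obstacle.} The hard part will be step (iii)–(iv): turning the qualitative observation "all slopes are simple rationals" into a sharp polynomial count of shadow edges, and then controlling the probabilistic averaging so that the failure probability of a random monotone path being short is genuinely small enough. The delicate exponent $16$ in $d$ is an artifact of iterating these estimates (a bound on edges of the shadow, composed with a bound on how many shadows one must consider, composed with the perturbation analysis), and getting the bookkeeping right — rather than any single clever idea — is what makes the proof technical. A secondary subtlety is ensuring genericity (that the random projection yields an actual simple polygon whose vertices pull back to a genuine monotone vertex path), which requires a perturbation argument that must itself be compatible with keeping the matrix totally unimodular, or at least with keeping the relevant subdeterminant bounds intact.
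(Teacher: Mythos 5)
The paper you are reading does not prove this theorem. It is a survey; the Dyer--Frieze bound is stated as a citation with the single remark that ``the proof is based on a randomized simplex algorithm,'' and then the paper immediately moves on to the Bonifas et al.\ result (Theorem~\ref{thm:subdeterminants}) which \emph{is} proved in detail. So there is no internal proof to compare your proposal against, only the attribution.

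That said, your sketch misattributes the technique. You open by invoking ``the random-walk / expected-distance framework of Dyer and Frieze,'' but what you then describe is the \emph{shadow-vertex} (parametric objective, two-dimensional projection) method --- the machinery of Borgwardt's average-case analysis and of Spielman--Teng's smoothed analysis --- which is a different tool. Dyer and Frieze's argument is a genuine Markov-chain argument: they set up a random walk on a lattice-like structure associated to the bases (exploiting that total unimodularity forces vertices to be integral and hence to sit in a cube of side governed by $\|c\|$), and they bound the mixing time of that walk via conductance/isoperimetry, from which the diameter bound and the randomized dual simplex algorithm both follow. Your steps (ii)--(v) --- counting slopes of a random shadow polygon, a lattice-point count bounding shadow edges, and an averaging/perturbation step --- are not the steps in their paper, and it is far from clear they would yield a polynomial bound here: for the shadow-vertex route one needs to control the expected shadow size, and the integrality of TU vertices does not by itself give the required anti-concentration of edge slopes. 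If you want to pursue the shadow-vertex route as an \emph{alternative} proof, the missing ingredient is a quantitative lemma bounding the number of edges of a random $2$-dimensional shadow of a TU polytope; you have not supplied that, and it is not obviously true. If you instead want to reproduce Dyer and Frieze, the objects to study are the conductance of a suitably defined walk on the vertex set and the isoperimetric profile of $\graph(P)$, not projections. Also note that the paper's subsequent Theorem~\ref{thm:subdeterminants} (Bonifas et al.) gives a much better bound for the TU case by an entirely geometric volume-expansion argument in the normal fan, which is worth contrasting with whichever route you take.
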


Here, the polyhedron $P$ can be assumed to be a $d$-polytope with $n$ facets. The proof is based on a randomized simplex algorithm. 
\end{itemize}

In the same vein, polynomial upper bounds exist for the diameters of the following classes of polytopes (see~\cite{BDEHN:subdeterminants,KimSan:update} and the references therein): Naddef (1989) proved the Hirsch bound for polytopes whose vertices have only 0/1 coordinates. Orlin (1997) proved a quadratic upper bound for network flow polytopes and Balinski (1984) proved the Hirsch bound for their linear programming duals. Brightwell, van den Heuvel and L.~Stougie (2006), improved by Hurkens (2007) have shown a linear bound for classical transportation polytopes. Their method was then generalized by De Loera, Kim, Onn and Santos (2009) to yield a quadratic upper bound for $3$-way axial transportation polytopes.

But the two most general result in this direction are the following 20-year old one by Kleinschmidt and Onn~\cite{KleOnn:diameter}, and the following very recent one by Bonifas et al.~\cite{BDEHN:subdeterminants}:

\begin{theorem}[Kleinschmidt and Onn~\cite{KleOnn:diameter}]
The diameter of a polytope with all its vertices integer and contained in $[0,k]^d$ cannot exceed $kd$.
\end{theorem}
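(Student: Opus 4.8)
The plan is to prove the statement by induction on the ambient dimension $d$, routing a dual path between two prescribed vertices by first ``flattening'' one coordinate onto a face of $P$ and then recursing on that face. Let $P\subseteq[0,k]^d$ be a polytope whose vertices are lattice points, and let $u,v$ be two of its vertices; the goal is a path of length at most $kd$ between them in $\graph(P)$. The base case $d=0$ (a point) is trivial, so assume $d\ge 1$.

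First I would record two elementary facts about the linear functional $x\mapsto x_d$. (a)~Its minimum $m$ and maximum $M$ over $P$ are attained at vertices of $P$, hence $m,M\in\Z$ and $0\le m\le M\le k$. (b)~If $w$ is a vertex of $P$ that does not maximize $x_d$, then there is an edge of $P$ incident to $w$ whose other endpoint $w'$ satisfies $w'_d>w_d$ — the standard improving-edge fact underlying the simplex method — and since both are integers, $w'_d\ge w_d+1$. Applying (b) repeatedly starting from $u$, I reach a vertex $u^+$ of the face $G^+:=P\cap\{x_d=M\}$ in at most $M-u_d$ steps, because $x_d$ increases by at least $1$ at each step and is bounded by $M$. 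Symmetrically a vertex $v^+$ of $G^+$ is reached from $v$ in at most $M-v_d$ steps, and likewise vertices $u^-,v^-$ of the bottom face $G^-:=P\cap\{x_d=m\}$ are reached from $u$ and $v$ in at most $u_d-m$ and $v_d-m$ steps respectively.

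Now comes the accounting. If $m=M$, then $P$ lies in the hyperplane $\{x_d=M\}$ and, after forgetting the constant coordinate, is a lattice polytope in $[0,k]^{d-1}$, so the inductive hypothesis gives $\diam(P)\le k(d-1)\le kd$ directly. Otherwise $G^+$ and $G^-$ are proper faces of $P$, each a polytope with lattice vertices inside a copy of $[0,k]^{d-1}$; by induction $\diam(G^+),\diam(G^-)\le k(d-1)$, and since edges of a face of $P$ are edges of $P$, a path inside $G^\pm$ is a path in $\graph(P)$. Concatenating three pieces, $u$ and $v$ are joined by a path of length at most $(M-u_d)+(M-v_d)+k(d-1)$ through the top face, and by one of length at most $(u_d-m)+(v_d-m)+k(d-1)$ through the bottom face. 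The two ``detour costs'' $2M-u_d-v_d$ and $u_d+v_d-2m$ are both nonnegative and sum to $2(M-m)$, so the smaller of them is at most $M-m\le k$. Choosing the cheaper route yields a path of length at most $k+k(d-1)=kd$, completing the induction.

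The only genuinely delicate point is this last averaging observation: exactly one coordinate is ``spent'' per level of the recursion, and it must be spent at cost at most $k$. Naively routing everything to one fixed extreme face would cost up to $2k$ for that coordinate and give only the weaker bound $k(d+1)$; the freedom to send the pair $\{u,v\}$ to whichever of the two extreme faces $G^+,G^-$ is collectively closer is precisely what recovers the missing factor. Everything else — integrality of $m$ and $M$, existence of a strictly improving edge at a non-optimal vertex, and the fact that a face of $P$ is again a lattice polytope in a cube of the same edge length and one lower dimension — is routine.
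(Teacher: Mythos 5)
The paper states this theorem without proof, only citing the reference, so there is no in-paper argument to compare your proposal against. Your induction is correct: the improving-edge lemma gains at least one integer unit of $x_d$ per step, each of the extreme faces $G^\pm$ projects to a lattice polytope in $[0,k]^{d-1}$ for the recursion, and the key averaging point — that the two detour costs $2M-u_d-v_d$ and $u_d+v_d-2m$ sum to $2(M-m)\le 2k$, so the cheaper route costs at most $k$ — is exactly what is needed to land on $kd$ rather than the naive $k(d+1)$. (One minor thing to be explicit about: the improving-edge fact holds at every vertex of an arbitrary, not necessarily simple, polytope because the tangent cone at a vertex is the conical hull of the incident edge directions.) As far as I know this is essentially the Kleinschmidt--Onn argument itself; carried out with the actual coordinate ranges rather than $k$, your recursion gives the slightly sharper bound $\sum_{i=1}^{d}(M_i-m_i)$, where $M_i$ and $m_i$ are the maximum and minimum of $x_i$ over $P$.
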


\begin{theorem}[Bonifas et al.~\cite{BDEHN:subdeterminants}]
\label{thm:subdeterminants}
Let $P=\{x \in \R^d \colon Ax\leq b\}$ be a polytope defined by an integer matrix $A\in \Z^{n\times d}$ and suppose all
subdeterminants of $A$ are bounded in absolute value by a certain  $M\in \N$.
The, the diameter of $P$ is bounded by $O\left(M^2 d^{3.5}\log dM\right)$.
\end{theorem}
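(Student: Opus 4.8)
The plan is to bound the diameter of $P=\{x\in\R^d:Ax\le b\}$ by analyzing a walk from one vertex $u$ to another vertex $v$ using an expansion / potential argument in the style of Dyer--Frieze, but guided by the geometry of the normal cones and exploiting that all subdeterminants of $A$ are at most $M$ in absolute value. First I would set up the combinatorial model: since we may assume $P$ is simple and bounded, a walk in $G(P)$ corresponds to a sequence of bases (maximal sets of linearly independent active rows), and each edge step exchanges one active constraint. The key quantitative input from the hypothesis is that every vertex of $P$ lies on a sublattice whose fundamental volumes are controlled by $M$; more precisely, Cramer's rule shows each vertex has coordinates that are ratios of subdeterminants, so two vertices sharing $d-1$ active constraints are separated by an amount bounded below in terms of $1/M$ and above in terms of $M$ times the local geometry. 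This ``well-separatedness'' is what replaces the total-unimodularity hypothesis of Dyer--Frieze.

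The heart of the argument is a \emph{shadow-vertex} or \emph{normal-cone covering} estimate. Fix the target vertex $v$ and, for a parameter $t$, consider the ball $B(v,t)$; the plan is to show that the set of vertices within dual-graph distance $k$ of $v$ ``fills up'' a region whose size grows geometrically in $k$ until it engulfs $u$, and to control the growth rate by $M$ and $d$. Concretely, I would follow Bonifas et al.\ in working with the cone of feasible directions at each vertex and a carefully chosen objective (or a random perturbation thereof): at a vertex $w$ at distance $r=\|w-v\|$ from $v$, one shows there is an improving edge that decreases the distance to $v$ by a definite fraction, where the fraction is bounded below by something like $1/(M^2 d^{\text{const}})$. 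This uses the subdeterminant bound twice: once to bound the number of relevant ``scales'' (the distinct values $\|w-v\|$ can take, on a logarithmic scale, giving the $\log(dM)$ factor), and once to bound the angle/volume distortion of the normal cones, giving the polynomial-in-$d$ and $M^2$ factors. Summing the progress over the $O(\log(dM))$ scales, with $O(M^2 d^{3.5})$ steps needed to cross each scale, yields the claimed $O(M^2 d^{3.5}\log(dM))$ bound.

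The main obstacle, I expect, is making the ``definite fractional progress at every vertex'' step precise and uniform. Bounding how much a single pivot step can move you toward $v$ requires a lower bound on the length of the improving edge and an upper bound on how much that edge can be ``tilted away'' from the direction $v-w$; both depend on the geometry of the vertex figure, which in turn must be controlled purely through the subdeterminant bound $M$ (via volumes of simplices spanned by rows of $A$ and via Hadamard-type inequalities). Getting the exponents to come out as $d^{3.5}$ rather than something worse is delicate and is really where the cleverness of \cite{BDEHN:subdeterminants} lies; I would treat the precise exponent bookkeeping as a black box and instead emphasize the structure: (i) reduce to simple bounded $P$; (ii) stratify by distance-to-$v$ scales, $O(\log dM)$ of them; (iii) within a scale, run a potential-decrease argument whose per-step gain is $\Omega(1/(M^2 d^{3.5}))$ of the current distance, using Cramer's rule and Hadamard's inequality to lower-bound edge lengths and upper-bound cone distortions; (iv) sum up. A secondary technical point is the reduction from polyhedra to the simple, bounded, full-dimensional case and ensuring the subdeterminant bound is preserved (or only mildly degraded) under the standard perturbations used for that reduction.
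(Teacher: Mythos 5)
Your proposal takes a genuinely different route from the paper, and in fact it is closer in spirit to the Dyer--Frieze argument (randomized simplex / primal potential decrease) that Bonifas et al.\ were explicitly trying to improve upon. The paper does not argue in primal space about distances $\|w-v\|$ or about improving edges at all. Instead it works entirely in the \emph{normal fan} of $P$: each vertex $v$ is identified with its normal cone $c_v$, and the quantity being tracked is the \emph{spherical volume} of a union of such cones. Two breadth-first-search trees are grown simultaneously in the dual graph of the fan, one from $c_u$ and one from $c_v$; the sets $U_i$ and $V_i$ of cones reached in $\le i$ steps are shown to satisfy a volume expansion inequality
\[
\vol(U_{i+1}) \ge \Bigl(1 + \sqrt{\tfrac{2}{\pi}}\,\tfrac{1}{M^2 d^{2.5}}\Bigr)\vol(U_i),
\]
valid as long as $\vol(U_i)$ is below half the volume of the unit ball. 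Once both $\vol(U_i)$ and $\vol(V_i)$ exceed half, the two sets must share a cone and the diameter is $\le 2i$. The expansion lemma itself combines two ingredients: (a) an upper bound of order $M^2 d^3$ on the surface-to-volume ratio of any single vertex cone (this is where the subdeterminant bound enters, via Cramer/Hadamard), and (b) a spherical isoperimetric inequality giving a lower bound $\sqrt{2d/\pi}$ on the surface-to-volume ratio of any union of cones. The $\log(dM)$ factor then comes from $\ln\bigl(2^d / \vol(c_u)\bigr)$ with the crude lower bound $\vol(c_u)\ge 1/(d!\,d^{d/2}M^d)$, not from a stratification into $O(\log dM)$ distance scales as in your accounting.

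Beyond being a different route, your proposal has a real gap at its central step: the claim that ``at a vertex $w$ one shows there is an improving edge that decreases the distance to $v$ by a definite fraction $\Omega(1/(M^2 d^{3.5}))$'' is not justified, and greedy descent toward a fixed target vertex can stall on a polytope --- there is no reason a neighbor closer to $v$ in Euclidean norm must exist. This is exactly the difficulty Dyer--Frieze confronted; they resolved it with a randomized pivot rule and an expectation argument, paying for it with much larger exponents ($d^{16}n^3$). You flag the exponent bookkeeping as a black box, but the issue is not bookkeeping: it is that the per-step progress guarantee you want is false as stated and needs to be replaced by something like a volume/expansion or probabilistic argument. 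Bonifas et al.\ avoid the problem entirely by never reasoning about a single descent path --- they reason about the \emph{set} of all vertices reachable in $i$ steps and show that set grows fast, which is a fundamentally more robust statement. The reduction to simple bounded $P$ that you mention is correct and is also the paper's first step (perturb $b$ only, which does not affect $M$).
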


What is remarkable about this result is that, when applied to totally unimodular matrices (taking  $M=1$) it gives a much better bound than the original one by Dyer and Frieze. We briefly sketch the main ideas in the proof of Theorem~\ref{thm:subdeterminants}:

\begin{itemize}
\item Without loss of generality $P$ is supposed to be simple. This can be achieved by slightly perturbing the right-hand side $b$, whose coefficients are not taken into account in $M$ or even assumed to be integer. The perturbation can only increase the diameter.

\item The proof then works in the \emph{normal fan} of $P$, which is a decomposition of the (dual) vector space $\R^n$ into simplicial cones 
$c_v$ corresponding to the vertices $v$ of $P$. To each simplicial cone we associate a \emph{spherical volume}, the volume of its intersection with the unit ball. Observe that the cone associated to a vertex $v$ is independent of the right-hand side $b$. What $b$ controls is only which cones appear (that is, which bases of $A$ correspond to vertices of $P$).

\item Bonifas et al.~then fix two cones $c_u$ and $c_v$ and grow \emph{breadth-first-search} trees in the dual graph of the normal fan (that is, in the normal fan of $P$) starting from those cones. Put differently, they consider the cones $U_i$ and $V_i$ for all $i\in \N$ with:
\begin{itemize}
\item $U_0=c_u$, $V_0=c_v$.
\item $U_{i+1}$ equal to $U_i$ together with all the vertex cones adjacent to $U_i$, and the same for $V_{i+1}$.
\end{itemize}

\item The main idea in the proof is then to study how the volume of $U_i$ and $V_i$ grow with $i$. When both can be guaranteed to be bigger than half of the volume of the ball (which is estimated from above as $2^d$) we are sure that $U_i$ and $V_i$ have a common vertex-cone, so the distance from $u$ to $v$ in the graph of $P$ is at most $2i$. 
\end{itemize}

For the last step the following ``volume expansion'' result is crucial:

\begin{lemma}
\label{lemma:volume-expansion}
Let $P=\{x \in \R^d \colon Ax\leq b\}$ be a polytope defined by an integer matrix $A\in \Z^{n\times d}$ and suppose all
subdeterminants of $A$ are bounded in absolute value by a certain  $M\in \N$.
Suppose that $\vol(U_i)$ is less than half of the volume of the $d$-sphere. Then:
\[
\vol(U_{i+1})\ge \left(1 + \sqrt{\frac{2}{\pi}}\frac{1}{M^2 d^{2.5}}\right) \vol(U_i).
\]
\end{lemma}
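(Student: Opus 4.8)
The plan is to estimate from below the spherical volume that is added when we pass from the union of cones $U_i$ to the union $U_{i+1}$ obtained by adjoining all vertex-cones adjacent to $U_i$. Write $\partial U_i$ for the set of facets of the cone complex $U_i$ that lie in the interior of $U_{i+1}$ (equivalently, the facets of $U_i$ shared with a vertex-cone not yet in $U_i$). Each such facet $F$ is the cone over a $(d-2)$-dimensional face, spanned by $d-1$ of the columns of $A$; it separates two adjacent simplicial cones $c_u, c_w$, where $w$ is a vertex of $P$ with $u,w$ joined by an edge. The new volume $\vol(U_{i+1}) - \vol(U_i)$ is a sum, over the facets $F \in \partial U_i$, of the spherical volume of the ``slab'' on the far side of $F$ inside $c_w$. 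So the first step is the bookkeeping identity
\[
\vol(U_{i+1}) - \vol(U_i) \;=\; \sum_{F \in \partial U_i} \vol\bigl(\text{portion of } c_{w(F)} \text{ on the far side of } F\bigr),
\]
and simultaneously
\[
\vol(U_i) \;\le\; \sum_{F \in \partial U_i} \bigl(\text{something controlling } \vol(c_u) \text{ from the facets } F\bigr),
\]
so that the ratio of the two sums, facet by facet, gives the claimed multiplicative gain. The key geometric input will be a \emph{per-facet} inequality: for a single facet $F$ separating $c_u$ from $c_w$, the spherical volume of $c_w$ beyond $F$ is at least $\bigl(\sqrt{2/\pi}\, M^{-2} d^{-2.5}\bigr)$ times the spherical volume of $c_u$ (or of the part of $U_i$ ``visible'' through $F$).

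The heart of the matter is therefore a one-step, two-cone volume comparison, and here is where the subdeterminant bound $M$ enters. Both $c_u$ and $c_w$ are simplicial cones generated by $d$ of the rows of $A$; they share the $(d-1)$ generators spanning $F$ and differ in one generator. The crucial quantitative facts are: (i) the dihedral-type angle by which $c_w$ opens up past the hyperplane $\aff(F)$ is bounded below in terms of $M$, because the relevant sine equals a ratio of two $d\times d$ determinants of submatrices of $A$ (numerator and denominator both at most $M$ in absolute value, numerator at least $1$ since $A$ is integral), so this sine is $\ge 1/M$; and (ii) the ``aspect ratio'' of a simplicial cone spanned by integer vectors with all subdeterminants $\le M$ is controlled, so that a cone cannot be too needle-like relative to its base facet. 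Combining (i) and (ii), one shows that reflecting (or, more precisely, ``tilting'') $c_u$ across $\aff(F)$ lands a definite fraction of its spherical mass inside $c_w$; quantifying this fraction is exactly the Gaussian-type integral that produces the constant $\sqrt{2/\pi}$ and the powers $M^{-2} d^{-2.5}$. Concretely I would parametrize the spherical cone by its solid angle, reduce the comparison to an integral of the form $\int \cos\theta \, d\mu$ over a spherical simplex, use the angle bound from (i) to say the integrand does not collapse, and use a standard estimate $\int_0^{t} e^{-x^2/2}dx \ge \sqrt{2/\pi}\, t/(\dots)$ type bound to extract the explicit constant; the extra factor $d^{-2.5}$ is the price of passing from a single two-dimensional cross-section to the $d$-dimensional solid angle (one factor roughly $\sqrt d$ from normalization of the sphere, and $d^2$ from the aspect-ratio control in (ii)).

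After the per-facet inequality is in hand, the global step is a summation/averaging argument: every vertex-cone in $U_{i+1}\setminus U_i$ is reached through at least one facet of $\partial U_i$, and every facet of $\partial U_i$ contributes new volume on its far side; so summing the per-facet inequalities and using that $\vol(U_i)$ is at most the sum over $F \in \partial U_i$ of the ``inner'' contributions gives
\[
\vol(U_{i+1}) - \vol(U_i) \;\ge\; \sqrt{\tfrac{2}{\pi}}\,\frac{1}{M^2 d^{2.5}}\,\vol(U_i),
\]
which rearranges to the statement. The hypothesis that $\vol(U_i)$ is less than half the sphere volume is used precisely to guarantee that $\partial U_i$ is nonempty and, more importantly, that a positive fraction of $U_i$'s boundary is ``exposed'' — if $U_i$ were more than half the sphere the complement could be tiny and no expansion could be guaranteed. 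I expect the main obstacle to be step (ii)–(iii): making the aspect-ratio control of integer simplicial cones with bounded subdeterminants precise enough to get a clean constant, and then carrying out the spherical-simplex volume integral carefully enough that the $\sqrt{2/\pi}$ and the exponents $2$ and $2.5$ come out exactly rather than with unspecified absolute constants. The bookkeeping in the global summation (ensuring no double counting of new cones and that each contributes its full far-side volume) is routine by comparison but must be done with care about cones that are adjacent to $U_i$ through more than one facet.
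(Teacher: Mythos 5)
Your outline correctly identifies the global shape of the argument (new volume is accrued across the exposed facets of $U_i$, and the hypothesis $\vol(U_i)<\tfrac12\vol(B^d)$ must feed in), but it is missing the one ingredient that actually makes the paper's bound work, and the replacement you propose would not close the argument.

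The paper's proof (Bonifas et al.) is a \emph{surface-area sandwich}, not a per-facet volume-to-volume comparison. The two inequalities it combines are: (a) an \emph{isoperimetric inequality for spherical cones}, stating that any spherical cone occupying at most half of the ball has surface-area-to-volume ratio at least $\sqrt{2d/\pi}$; and (b) an upper bound $M^2 d^3$ on the surface-area-to-volume ratio of each individual vertex cone $c_w$, coming from the subdeterminant hypothesis. Then the new cones added in $U_{i+1}\setminus U_i$ have total surface area at least the (exposed) surface area of $U_i$, and (b) converts that to volume, while (a) converts $\vol(U_i)$ to surface area, giving exactly $\vol(U_{i+1}\setminus U_i)\ge M^{-2}d^{-3}\cdot\sqrt{2d/\pi}\cdot\vol(U_i)$. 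Note that the half-ball hypothesis is not merely guaranteeing $\partial U_i\neq\emptyset$ --- it is precisely what makes the isoperimetric bound in (a) valid, and the $\sqrt{2/\pi}$ and the missing half-power of $d$ both come from (a).

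In your plan the inequality ``$\vol(U_i)\le\sum_{F\in\partial U_i}(\text{something controlling }\vol(c_u))$'' is left as a placeholder, and you propose to fill it with a per-facet comparison of the form $\vol(\text{new slab beyond }F)\gtrsim\vol(c_u)$. That does not suffice: there is no reason the total volume of $U_i$ is controlled by the sum of the volumes of the cones $c_u$ that happen to meet $\partial U_i$ --- $U_i$ can have a large interior and few exposed facets. What you need is exactly a lower bound on the \emph{boundary measure} of $U_i$ relative to its volume, which is the isoperimetric inequality (a). Likewise, the dihedral-angle/tilting argument you sketch for the per-facet step would, at best, compare the new cone to one neighboring cone, and would require a global averaging argument that you do not supply; the paper avoids this entirely by making the per-cone bound (b) a comparison between a cone's volume and its own surface area, which adds up cleanly across the exposed boundary. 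So the gap is concrete: you are missing the isoperimetric inequality for spherical cones, and without it the claimed lower bound on $\vol(U_i)$ in terms of $\partial U_i$ does not follow.
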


From this the theorem is easily derived:

\begin{proof}[Proof of Theorem~\ref{thm:subdeterminants}]
The number of iterations needed to guarantee that $\vol(U_i)$ exceeds half of the ball is, by Lemma~\ref{lemma:volume-expansion},
bounded above by the smallest $i$ such that:
\[
\left(1 + \sqrt{\frac{2}{\pi}}\frac{1}{M^2 d^{2.5}}\right)^i \vol(c_v)\ge 2^d.
\]
Put differently, an upper bound for it is
\[
\frac{\ln(2^d/\vol(c_u))}{\ln\left(1 + \sqrt{\frac{2}{\pi}}\frac{1}{M^2 d^{2.5}}\right)}\ge
\sqrt{\frac{\pi}{2}} M^2 d^{2.5} \ln(2^d/\vol(c_u)).
\]
Now a lower bound is needed on the volume of an individual cone $c_u$. Such a bound is, for example, $1/(d!d^{d/2}M^d)$, since scaling down the generators of $c_v$ by a factor of $\sqrt{d}M$ makes them all be contained in the unit ball and since the volume enclosed by the generators is at least $1/n!$ (more precisely, it equals $1/d!$ times the determinant of the corresponding rows of $A$). This makes the number of needed iterations be in
\[
O(M^2 d^{2.5} \ln(2^d d!d^{d/2}M^d)) = O(M^2 d^{3.5} \ln(dM)),
\]
as claimed.
\qed
\end{proof}

The proof of Lemma~\ref{lemma:volume-expansion}, in turn, uses two ideas:
\begin{itemize}
\item For each individual cone it is possible to upper bound the ratio ``surface area to volume'' in terms of $M$ and $d$ (the precise bound the authors show for this ratio is $M^2d^3$).

\item For any union of vertex cones (in fact, for any spherical cone in dimension $d$) one has the following isoperimetric inequality: the ratio 
``surface area to volume'' of the whole cone is at least $\sqrt{2d/\pi}$.
\end{itemize}
These two inequalities are then combined as follows: when going from $U_i$ to $U_{i+1}$, the volume added by the new cones is at least 
$M^{-2}d^{-3}$ the total surface area of those cones. In turn, that covers at least the total surface area of $U_i$, which is at least $\sqrt{2d/\pi}$ times the volume of $U_i$.

That is:
\[
\vol(U_{i+1}\setminus U_i) \ge  \frac{\sqrt{2d/\pi}}{M^2d^3} \vol(U_i),
\]
which is the contents of Lemma~\ref{lemma:volume-expansion}.




\section{Conclusion}
This paper deals with the problem of bounding the diameter of polyhedra in terms of their dimension and number of facets. The last section of the paper revises recent progress (sometimes ``negative progress'', as in the case of the counter-examples to the Hirsch Conjecture) but most of the paper (Sections 2 and 3) is devoted to attempts at ``\emph{proving it by generalizing it}''. For this, the diameter problem on simplicial complexes is posed and studied without assuming that the complexes come from a polytope or polyhedron.

The main conclusion is that without extra conditions on the complexes polynomial bounds simply do not exist (Corollary 2.12). However, several results hint that there is hope of getting polynomial bounds under the mild assumption of the complexes being \emph{normal}, a.~k.~a.~\emph{locally strongly connected}. For example, Conjecture~\ref{conj:haehnle} would imply this, by Proposition~\ref{prop:clm}.

So, perhaps the main question is how plausible Conjecture~\ref{conj:haehnle} is. I have to admit that, although two years ago I was very optimistic about this conjecture, now I have doubts. Let me explain why.
For me the strongest evidence in favor of Conjecture~\ref{conj:haehnle} is the combination of Examples~\ref{exm:complete-clm} and~\ref{exm:injective-clm} and Proposition~\ref{prop:extremal-clm}. The examples show two extremal and ``opposite'' families of (multi)-complexes for which the conjecture holds and, what is more striking, for which the bound in the conjecture is sharp. What now makes me have doubts is the realization that these two examples are particular cases (in the ``connected layer family'' world) of flag and normal (multi)-complexes. And for flag and normal complexes we actually know the Hirsch bound to hold (Theorem~\ref{thm:hirsch-for-flag}).

On the other direction, the arguments of Section~\ref{sec:Hirsch-counter} indicate that we need new ideas if we want to have non-Hirsch polytopes that break the ``linear barrier''. Even the construction of polytopes (or pure, normal simplicial complexes, for that matter) with diameter exceeding $2n$ seems an extremely ambitious goal at this point.

\subsection*{Acknowledgements}
I would very much like to thank the two editors of TOP, Miguel \'Angel~Goberna and Jes\'us Artalejo, for the invitation to write this paper. Unfortunately, while the paper was being processed I received the extremely sad news of the passing away of Jes\'us. Let me send my warmest condolences to his colleagues, friends and, above all, his family. 

I also want to thank Jes\'us De Loera, Steve Klee, Tam\'as Terlaky, and Miguel \'Angel Goberna (again) for sending me comments and typos on the first version of the paper.

\end{document}